\documentclass[10pt,a4paper]{amsart}%
\usepackage{amsmath, amsfonts, amsthm,color, latexsym, amssymb}
\usepackage{amscd}
\usepackage{amsmath}
\usepackage{amsfonts}
\usepackage{amssymb}
\usepackage[all]{xy}
\usepackage{graphicx}%
\setcounter{MaxMatrixCols}{30}
%TCIDATA{OutputFilter=latex2.dll}
%TCIDATA{Version=5.50.0.2890}
%TCIDATA{LastRevised=Tuesday, November 19, 2013 07:18:58}
%TCIDATA{<META NAME="GraphicsSave" CONTENT="32">}
%TCIDATA{<META NAME="SaveForMode" CONTENT="1">}
%TCIDATA{BibliographyScheme=Manual}
%BeginMSIPreambleData
\providecommand{\U}[1]{\protect\rule{.1in}{.1in}}
%EndMSIPreambleData
\providecommand{\U}[1]{\protect\rule{.1in}{.1in}}
\newtheorem{theorem}{Theorem}[section]
\newtheorem{proposition}[theorem]{Proposition}
\newtheorem{corollary}[theorem]{Corollary}

\newtheorem{remark}[theorem]{Remark}

\newtheorem{lemma}[theorem]{Lemma}
\newtheorem{definition}[theorem]{Definition}
\setlength{\textwidth}{14.0cm}
\setlength{\textheight}{21.5cm}
\setlength{\topmargin}{-.5cm}
\begin{document}
\title{Weak compactness and strongly summing multilinear operators}
\author{Daniel Pellegrino, Pilar Rueda, Enrique A. S\'anchez-P\'erez}
\thanks{D. Pellegrino acknowledges with thanks the support of CNPq Grant 313797/2013-7 (Brazil). P. Rueda acknowledges with thanks the support of the Ministerio de
Econom\'{\i}a y Competitividad (Spain) MTM2011-22417. E.A. S\'anchez P\'erez
acknowledges with thanks the support of the Ministerio de Econom\'{\i}a y
Competitividad (Spain) MTM2012-36740-C02-02.}
\date{}
\subjclass[2000]{Primary 46A32, Secondary 47B10}
\keywords{absolutely summing operators, strongly summing multilinear mappings, strongly summing polynomials,
composition ideals.}
\maketitle

\begin{abstract}
Every absolutely summing linear operator is weakly compact. However, for
strongly summing multilinear operators and polynomials -- one of the most
natural extensions of the linear case to the non linear framework -- weak
compactness does not hold in general. We show that a subclass of the class of
strongly summing multilinear operators/polynomials, sharing its main
properties such as Grothendieck's Theorem, Pietsch Domination Theorem and
Dvoretzky--Rogers Theorem, has even better properties like weak compactness
and a natural factorization theorem.

\end{abstract}

\section{Introduction}

The theory of absolutely summing linear operators has its roots in the 1950s
with A. Grothendieck's pioneer ideas; in its modern presentation, it appeared
in 1966-67 in the works of A. Pietsch \cite{pi2} and B. Mitiagin and A.
Pe\l czy\'{n}ski \cite{miti}. A cornerstone in the theory is the remarkable
paper of J. Lindenstrauss and A. Pe\l czy\'{n}ski \cite{lp}, which clarified
Grothendieck's ideas, without the use of tensor products. Lindenstrauss and
Pe\l czy\'{n}ski were also responsible for the reformulation of Grothendieck's
inequality, which is still a fundamental result of Banach Space Theory and
Mathematical Analysis in general (see \cite{pisier}). Nowadays absolutely
summing operators is a current subject in books of Banach Space Theory (see,
for instance, \cite{albiac, varios}). For a detailed approach to the linear
theory of absolutely summing operators we refer to the excellent book of J.
Diestel, H. Jarchow and A. Tonge \cite{DJT}.

It is then comprehensive that a big effort has been made, since Pietsch's
proposal \cite{pi}, to try and generalize the linear theory to non linear
operators. Many families of non linear operators have been considered such as
multilinear operators, homogeneous polynomials, holomorphic mappings, $\alpha
$-homogeneous mappings, Lipschitz mappings among others. However, extending
summability properties to non linear operators has been proved difficult and
intriguing. For instance, there are several extensions of absolutely
$p$-summing linear operators to the multilinear setting that have been
considered in the literature. Besides its intrinsic interest, the multilinear
theory of absolutely summing operators has shown important connections,
including applications to Quantum Information Theory (see \cite{monta}). This
proliferation of classes of summing multilinear maps has lead to the
appearance of works that compare different approaches (see \cite{CD, Pe}). The
first challenging task when dealing with multilinear operators is probably to
identify the class of multilinear operators that best inherits the spirit of
the absolutely summing linear operators. According to \cite{pp0, pp3} one of
the most natural extensions of the notion of absolutely $p$-summing linear
operators to the multilinear setting is the notion of strongly $p$-summing
multilinear operators, due to V. Dimant (\cite{dimant}). This class lifts to
the multilinear framework most of the main properties of absolutely
$p$-summing linear operators: Grothendieck's Theorem, Pietsch Domination
Theorem, Inclusion Theorem. However, as we will see, a natural version of the
Pietsch Factorization Theorem does not hold for this class.

%Among these multilinear summing operators there are two that we will single out: $p$-dominated multilinear mappings and multiple summing. BLABLABLA.
The good behavior of multilinear extensions has found no echo when considering
extensions of absolutely summing operators to polynomials. In this non linear
setting, several attempts have been made but all of them have found rough
edges to succeed in. This is the case of $p$-dominated homogeneous
polynomials, for which a Pietsch type factorization theorem has been pursuit
(see \cite{anais,MeTo, BoPeRu2007,correc}) and succeeded just when the domain
is separable. Recently, the second and third authors \cite{RuSaFact} have
isolate the class of $p$-dominated polynomials that satisfy a Pietsch type
factorization theorem: the factorable $p$-dominated polynomials. However, even
if this makes a big difference with $p$-dominated polynomials, they still lack
good properties as evidenced by the fact that factorable $p$-dominated
polynomial do not define a composition ideal or,
equivalently, the linearization of a factorable $p$-dominated polynomial may
not be absolutely $p$-summing.

Our aim in this paper is to introduce factorable strongly $p$-summing
multilinear operators and homogeneous polynomials to the full extent of
absolutely $p$-summing linear operators. These new classes of summing
polynomials/multilinear operators stand apart from previous generalizations as
they keep a big amount of the fundamental properties  that are satisfied in the linear theory and are not satisfied by former non linear classes.
Factorable strongly $p$-summing multilinear operators is a subclass of
strongly $p$-summing multilinear operators that has in addition a quite
natural Pietsch Factorization type theorem and weak compactness. Factorable
strongly $p$-summing homogeneous polynomials also fulfills a factorization
theorem in the spirit of Pietsch, are weakly compact and a polynomial belongs
to the class if and only if its second adjoint (in the sense of Aron and
Schottenloher) is in the class. Actually, an homogeneous polynomial is
factorable strongly $p$-summing if and only if its associated multilinear map
is factorable $p$-summing or, equivalently, its linearization is absolutely
$p$-summing. This brings deep strengths that are not shared by former classes
of summing polynomials as dominated or strongly summing polynomials.

This paper is organized as follows. The next section contains the basics
(definitions and main results) on linear and non linear summability that are
in order for our purposes. In Section \ref{multilinear} we show that a slight
modification of the notion of strongly $p$-summing operator (inspired in a
recent paper of the second and third author) generates a subclass that keeps
its main properties and also has a factorization theorem in the lines of the
approach above. These are the factorable strongly $p$-summing multilinear
operators. As a consequence we have weak compactness, as in the linear case.
In Section \ref{polynomials} we deal with homogeneous polynomials, proving
that a polynomial is factorable strongly $p$-summing if and only if its
linearization is absolutely $p$-summing. The connection between $m$%
-homogeneous polynomials and $m$-linear operators is established: an
$m$-homogeneous polynomial is factorable strongly $p$-summing if and only if
its associated symmetric $m$-linear map is factorable strongly $p$-summing.
These results yield to obtain in Section \ref{wealth} proper generalizations
of fundamental properties related to summability for linear operators to
multilinear maps and homogeneous polynomials. Among other results, we show
that a Dvoretzky-Rogers type theorem, a Lindenstrauss--Pe\l czy\'nski type
theorem or a Grothendieck type theorem work for factorable strongly
summability. Finally, in Section \ref{coherence} we show that the sequence
formed by the ideals of factorable strongly summing homogeneous polynomials
and factorable strongly summing multilinear operators is coherent and
compatible with the ideal of absolutely summing linear operators.

\section{Background: linear and multilinear summability}

If $1\leq p<\infty$ and $X,Y$ are Banach spaces, a continuous linear operator
$u:X\rightarrow Y$ is absolutely $p$-summing $\left(  u\in\Pi_{p}\left(
X;Y\right)  \right)  $ if there is a constant $C\geq0$ such that
\[
\left(
%TCIMACRO{\dsum \limits_{j=1}^{m}}%
%BeginExpansion
{\displaystyle\sum\limits_{j=1}^{m}}
%EndExpansion
\left\Vert u(x_{j})\right\Vert ^{p}\right)  ^{1/p}\leq C\left(  \sup
_{\varphi\in B_{X^{\ast}}}\sum\limits_{j=1}^{m}\left\vert \varphi
(x_{j})\right\vert ^{p}\right)  ^{1/p}%
\]
for all $x_{1},...,x_{m}\in X$ and all positive integers $m$. The infimum of
all $C$ that satisfy the above inequality defines a norm, denoted by $\pi
_{p}(u),$ and $\left(  \Pi_{p}\left(  X,Y\right)  ,\pi_{p}\right)  $ is a
Banach space. The cornerstones of the theory of absolutely summing linear
operators are the following theorems:

\begin{itemize}
\item (Dvoretzky-Rogers theorem) If$\ p\geq1,$ then $\Pi_{p}(X;X)=\mathcal{L}%
(X;X)$ if and only if $\dim X<\infty.$

\item (Grothendieck's theorem ) Every continuous linear operator from
$\ell_{1}$ to $\ell_{2}$ is absolutely $1$-summing.

\item (Lindenstrauss--Pe\l czy\'{n}ski theorem) If $X$ and $Y$ are
infinite-dimensional Banach spaces, $X$ has an unconditional Schauder basis
and $\Pi_{1}(X;Y)=\mathcal{L}(X;Y)$ then $X=\ell_{1}$ and $Y$ is a Hilbert space.

\item (Pietsch Domination theorem) If $X$ and $Y$ are Banach spaces, a
continuous linear operator $u:X\rightarrow Y$ is absolutely $p$-summing if and
only if there exist a constant $C\geq0$ and a Borel probability measure $\mu$
on the closed unit ball of the dual of $X,$ $\left(  B_{X^{\ast}}%
,\sigma(X^{\ast},X)\right)  ,$ such that%
\begin{equation}
\left\Vert u(x)\right\Vert \leq C\left(  \int_{B_{X^{\ast}}}\left\vert
\varphi(x)\right\vert ^{p}d\mu\right)  ^{\frac{1}{p}} \label{gupdt}%
\end{equation}
for all $x\in X.$

\item (Inclusion theorem) If $1\leq p\leq q<\infty,$ then every absolutely
$p$-summing operator is absolutely $q$-summing.

\item (Pietsch Factorization theorem) A continuous linear operator
$u:X\rightarrow Y$ is absolutely $p$-summing if, and only if, there exist a
regular Borel probability measure $\mu$ on $B_{X^{\ast}}$, a closed subspace
$X_{p}$ of $L_{p}\left(  \mu\right)  $ and a continuous linear operator
$\widehat{u}:X_{p}\rightarrow Y$ such that%
\[
j_{p}\circ i_{X}(X)\subset X_{p}\text{ and }\widehat{u}\circ j_{p}\circ
i_{X}=u,
\]
where $i_{X}:X\rightarrow C\left(  B_{X^{\ast}}\right)  $ and $j_{p}:C\left(
B_{X^{\ast}}\right)  \rightarrow L_{p}\left(  \mu\right)  $ are the canonical
inclusions. Moreover, every absolutely $p$-summing linear operator is weakly compact.
\end{itemize}

%The search for multilinear extensions of the concept of absolutely $p$-summing linear operators was initiated by Pietsch in 1983 (\cite{pi}).

From now on $p\in\left[  1,\infty\right)  $ and $X,X_{1},...,X_{n},Y$ are
Banach spaces over the same scalar field $\mathbb{K}=\mathbb{R}$ or
$\mathbb{C}.$ A continuous $n$-linear operator $T:X_{1}\times\cdots\times
X_{n}\rightarrow Y$ is \textit{$p$-dominated} if there is a constant $C\geq0$
such that
\[
\left(  \sum\limits_{j=1}^{m}\parallel T(x_{j}^{1},...,x_{j}^{n}%
)\parallel^{\frac{p}{n}}\right)  ^{n/p}\leq C\left(  \sup_{\varphi\in
B_{X_{1}^{\ast}}}\sum\limits_{j=1}^{m}\left\vert \varphi(x_{j}^{1})\right\vert
^{p}\right)  ^{1/p}\cdots\left(  \sup_{\varphi\in B_{X_{n}^{\ast}}}%
\sum\limits_{j=1}^{m}\left\vert \varphi(x_{j}^{n})\right\vert ^{p}\right)
^{1/p}%
\]
for all $x_{j}^{k}\in X_{j},$ all $m\in\mathbb{N}$ and $\left(  j,k\right)
\in\left\{  1,...,m\right\}  \times\left\{  1,...,n\right\}  $. This concept
is essentially due to Pietsch (see \cite{AlencarMatos, anais}) and lifts
several important properties of the original linear ideal of absolutely
summing operators to the multilinear framework. The terminology
\textquotedblleft$p$-dominated\textquotedblright, coined by M.C. Matos, is
motivated by the following Pietsch-Domination type theorem:

\begin{theorem}
[Pietsch, Geiss, 1985](\cite{geisse}) A continuous $n$-linear operator
$T:X_{1}\times\cdots\times X_{n}\rightarrow Y$ is $p$-dominated if and only if
there exist $C\geq0$ and regular probability measures $\mu_{j}$ on the Borel
$\sigma$-algebras of $B_{X_{j}^{\ast}}$ endowed with the weak star topologies
such that
\[
\left\Vert T\left(  x_{1},...,x_{n}\right)  \right\Vert \leq C\prod
\limits_{j=1}^{n}\left(  \int_{B_{X_{j}^{\ast}}}\left\vert \varphi\left(
x_{j}\right)  \right\vert ^{p}d\mu_{j}\left(  \varphi\right)  \right)  ^{1/p}%
\]
for every $x_{j}\in X_{j}$ and $j=1,...,n$.
\end{theorem}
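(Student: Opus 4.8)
The plan is to prove the two implications separately: the direction ``domination $\Rightarrow$ $p$-dominated'' is a routine application of Hölder's inequality, while the converse is the substantial part and will require a minimax argument.

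For the easy direction, assuming the domination inequality holds, I would fix vectors $x_i^j\in X_j$ for $1\le i\le m$ and $1\le j\le n$, raise the hypothesis to the power $p/n$, sum over $i$, and apply the generalized Hölder inequality $\sum_i\prod_j b_{ij}^{1/n}\le\prod_j\big(\sum_i b_{ij}\big)^{1/n}$ to the nonnegative numbers $b_{ij}=\int_{B_{X_j^{\ast}}}|\varphi(x_i^j)|^p\,d\mu_j$. Interchanging sum and integral and using $\int\sum_i|\varphi(x_i^j)|^p\,d\mu_j\le\sup_{\psi\in B_{X_j^{\ast}}}\sum_i|\psi(x_i^j)|^p$ (valid since each $\mu_j$ is a probability measure) would yield the $p$-dominated inequality with the same constant $C$ after raising to the power $n/p$.

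For the converse I would run a Ky Fan minimax argument over the set $\mathcal{K}=\prod_{j=1}^{n}P(B_{X_j^{\ast}})$ of tuples of regular Borel probability measures, each factor carrying the weak-star topology; by Banach--Alaoglu each $P(B_{X_j^{\ast}})$ is a weak-star compact convex subset of $C(B_{X_j^{\ast}})^{\ast}$, so $\mathcal{K}$ is compact and convex. The hard part will be that the right-hand side of the target inequality is a \emph{product} of $n$ integrals, hence multilinear --- not convex --- in the variable $(\mu_1,\dots,\mu_n)$, so Ky Fan's lemma cannot be applied directly. I would circumvent this by passing to $n$-th roots: using the multilinearity and homogeneity of $T$ together with the AM--GM inequality, the product inequality
\[
\left\Vert T(x_1,\dots,x_n)\right\Vert\le C\prod_{j=1}^{n}\Big(\int_{B_{X_j^{\ast}}}|\varphi(x_j)|^p\,d\mu_j\Big)^{1/p}
\]
is equivalent, for fixed measures, to its additive form
\[
\left\Vert T(x_1,\dots,x_n)\right\Vert^{p/n}\le\frac{C^{p/n}}{n}\sum_{j=1}^{n}\int_{B_{X_j^{\ast}}}|\varphi(x_j)|^p\,d\mu_j,
\]
the passage back to the multiplicative form being recovered by rescaling each $x_j$ by a scalar $\lambda_j$ chosen to equalize the integrals. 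The crucial gain is that the right-hand side of the additive form is now \emph{affine} in $(\mu_1,\dots,\mu_n)$.

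I would then set, for each $x=(x_1,\dots,x_n)$, the weak-star continuous affine function $g_x(\mu)=\|T(x)\|^{p/n}-\frac{C^{p/n}}{n}\sum_j\int|\varphi(x_j)|^p\,d\mu_j$ on $\mathcal{K}$, and take $\mathcal{F}$ to be the set of all finite convex combinations of such $g_x$; being convex, $\mathcal{F}$ is automatically a concave family of continuous convex functions, as Ky Fan's lemma requires. The key verification is that each member of $\mathcal{F}$ is nonpositive somewhere on $\mathcal{K}$: for $f=\sum_i t_i g_{x^{(i)}}$ I would absorb the weights into the points via multilinearity (replacing $x^{(i)}_j$ by $t_i^{1/p}x^{(i)}_j$), apply the $p$-dominated hypothesis to the resulting finite family, and finish with AM--GM; evaluating at $\mu_j=\delta_{\psi_j}$, where $\psi_j\in B_{X_j^{\ast}}$ maximizes the continuous map $\psi\mapsto\sum_i t_i|\psi(x^{(i)}_j)|^p$ on the compact ball, would give $f(\mu)\le0$. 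Ky Fan's lemma then produces a single $\mu^{0}=(\mu_1^{0},\dots,\mu_n^{0})\in\mathcal{K}$ with $g_x(\mu^{0})\le0$ for every $x$, which is exactly the additive inequality for all inputs; reversing the reduction delivers the desired product inequality. The only delicate points left to check are the weight-absorption step and the equivalence between the additive and multiplicative forms, including the degenerate case in which one of the integrals vanishes, where one lets the corresponding $\lambda_j\to\infty$ to force $\|T(x)\|=0$.
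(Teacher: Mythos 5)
The paper offers no proof of this statement---it is quoted as a known background result with a citation to Geiss's 1985 Diplomarbeit---so the only meaningful comparison is with the standard argument, which is exactly what you reproduce. Your proof is correct: the easy direction via the generalized H\"older inequality and the fact that the $\mu_j$ are probability measures checks out, and in the converse the essential moves---replacing the product inequality by its affine form $\|T(x)\|^{p/n}\le \tfrac{C^{p/n}}{n}\sum_{j}\int_{B_{X_j^{\ast}}}|\varphi(x_j)|^{p}\,d\mu_j$ (with the product form recovered by rescaling under the constraint $\prod_j|\lambda_j|=1$, and the degenerate case where some integral vanishes handled by sending the corresponding $\lambda_j$ to infinity), absorbing the convex weights via $t_i g_{x^{(i)}}=g_{y^{(i)}}$ with $y^{(i)}_j=t_i^{1/p}x^{(i)}_j$, and testing each member of the family against Dirac measures at maximizing functionals before invoking Ky Fan's lemma on the compact convex set $\prod_j P(B_{X_j^{\ast}})$---are all carried out correctly. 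This is the same minimax machinery that underlies the unified Pietsch Domination Theorem of \cite{pp1,pp2}, which the paper does invoke elsewhere (in the proof of its own domination theorem for factorable strongly $p$-summing operators).
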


\begin{corollary}
If $1\leq p\leq q<\infty$, then every $p$-dominated multilinear operator is
$q$-dominated$.$
\end{corollary}

The notion of $p$-semi-integral operator is another possible multilinear
generalization of the class of absolutely summing linear operators. If
$p\geq1,$ a continuous $n$-linear operator $T:X_{1}\times\cdots\times
X_{n}\rightarrow Y$ is \textit{$p$-semi-integral} if there exists a $C\geq0$
such that%
\[
\left(  \sum\limits_{j=1}^{m}\parallel T(x_{j}^{1},...,x_{j}^{n})\parallel
^{p}\right)  ^{1/p}\leq C\left(  \sup_{\left(  \varphi_{1},..,\varphi_{_{n}%
}\right)  \in B_{X_{1}^{\ast}}\times\cdots\times B_{X_{n}^{\ast}}}%
\sum\limits_{j=1}^{m}\mid\varphi_{1}(x_{j}^{1})...\varphi_{n}(x_{j}^{n}%
)\mid^{p}\right)  ^{1/p}%
\]
for every $m\in\mathbb{N}$, $x_{j}^{k}\in X_{k}$ with $k=1,...,n$ and
$j=1,...,m.$

This class dates back to the research report \cite{AlencarMatos} of R. Alencar
and M.C. Matos. As in the case of $p$-dominated multilinear operators, a
Pietsch Domination theorem is valid in this context:

\begin{theorem}
A continuous $n$-linear operator $T:X_{1}\times\cdots\times X_{n}\rightarrow
Y$ is $p$-semi-integral if and only if there exist $C\geq0$ and a regular
probability measure $\mu$ on the Borel $\sigma-$algebra $\mathcal{B}%
(B_{X_{1}^{^{\ast}}}\times\cdots\times$ $B_{X_{n}^{^{\ast}}})$ of
$B_{X_{1}^{^{\ast}}}\times\cdots\times$ $B_{X_{n}^{^{\ast}}}$ endowed with the
product of the weak star topologies $\sigma(X_{l}^{\ast},X_{l}),$ $l=1,...,n,$
such that
\[
\parallel T(x_{1},...,x_{n})\parallel\leq C\left(  \int_{B_{X_{1}^{\ast}%
}\times\cdots\times B_{X_{n}^{\ast}}}\mid\varphi_{1}(x_{1})...\varphi
_{n}(x_{n})\mid^{p}d\mu(\varphi_{1},...,\varphi_{n})\right)  ^{1/p}%
\]
for all $x_{j}\in X_{j}$, $j=1,...,n.$
\end{theorem}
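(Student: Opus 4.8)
The plan is to prove the two implications separately, the converse (summability $\Rightarrow$ domination) carrying the real weight through a Ky Fan / Hahn--Banach separation argument. Throughout I set
\[
K:=B_{X_{1}^{\ast}}\times\cdots\times B_{X_{n}^{\ast}},
\]
which is compact Hausdorff for the product of the weak star topologies by Banach--Alaoglu together with Tychonoff's theorem, and I work inside the space of real continuous functions on $K$. The implication from domination to $p$-semi-integrability is the routine one: assuming the integral inequality I raise it to the $p$-th power, apply it to each tuple $(x_{j}^{1},\dots,x_{j}^{n})$ of an arbitrary finite family, and sum over $j$; since $\mu$ is a probability measure I may exchange the finite sum with the integral and bound the resulting integrand by its supremum over $K$, which gives
\[
\sum_{j=1}^{m}\|T(x_{j}^{1},\dots,x_{j}^{n})\|^{p}\leq C^{p}\sup_{\varphi\in K}\sum_{j=1}^{m}|\varphi_{1}(x_{j}^{1})\cdots\varphi_{n}(x_{j}^{n})|^{p},
\]
and taking $p$-th roots recovers the defining inequality with the same constant.

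For the converse I would associate, to each finite family $(x_{j}^{k})$ with $1\le j\le m$ and $1\le k\le n$, the function
\[
h(\varphi)=\sum_{j=1}^{m}\Bigl(C^{p}\,|\varphi_{1}(x_{j}^{1})\cdots\varphi_{n}(x_{j}^{n})|^{p}-\|T(x_{j}^{1},\dots,x_{j}^{n})\|^{p}\Bigr),
\]
which lies in $C(K)$ because each evaluation $\varphi_{k}\mapsto\varphi_{k}(x_{k})$ is weak star continuous and products, moduli and $p$-th powers preserve continuity. Let $\mathcal{Q}\subseteq C(K)$ be the set of all such $h$. Concatenating index sets shows $\mathcal{Q}$ is closed under addition, and the multilinearity of $T$ lets me absorb a positive scalar by rescaling one slot of each argument (replacing $x_{j}^{1}$ by $\lambda^{1/p}x_{j}^{1}$ scales both terms of $h$ by $\lambda$), so $\mathcal{Q}$ is a convex cone. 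The essential point is that $\sup_{\varphi\in K}h(\varphi)\ge0$ for every $h\in\mathcal{Q}$: by compactness the map $\varphi\mapsto\sum_{j}|\varphi_{1}(x_{j}^{1})\cdots\varphi_{n}(x_{j}^{n})|^{p}$ attains its maximum at some $\varphi^{\ast}\in K$, and the $p$-semi-integral inequality applied to the generating family yields precisely $h(\varphi^{\ast})\ge0$.

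With these properties in hand, Ky Fan's lemma provides a regular Borel probability measure $\mu$ on $K$ with $\int_{K}h\,d\mu\ge0$ for every $h\in\mathcal{Q}$. Specializing to the one-term functions ($m=1$) gives
\[
\|T(x_{1},\dots,x_{n})\|^{p}\le C^{p}\int_{K}|\varphi_{1}(x_{1})\cdots\varphi_{n}(x_{n})|^{p}\,d\mu(\varphi_{1},\dots,\varphi_{n})
\]
for all $x_{j}\in X_{j}$, and taking $p$-th roots produces the asserted domination. I expect the main obstacle to be exactly the bookkeeping that casts the hypothesis into the form demanded by Ky Fan's lemma, namely verifying that $\mathcal{Q}$ is a convex cone all of whose members satisfy $\sup_{K}h\ge0$; once this is established, the extraction of $\mu$ and the reduction to single tuples are immediate.
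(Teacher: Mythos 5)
Your argument is correct: the easy direction is handled properly (raise to the $p$-th power, sum, use that $\mu(K)=1$ and bound the integrand by its supremum), and in the converse direction the family $\mathcal{Q}$ is indeed a convex cone (concatenation gives additivity, and the rescaling $x_{j}^{1}\mapsto\lambda^{1/p}x_{j}^{1}$ together with multilinearity gives positive homogeneity), every member of which has nonnegative supremum on the compact set $K$ precisely because the $p$-semi-integral inequality, evaluated at a maximizing $\varphi^{\ast}\in K$, says so; the separation/Ky Fan step and the specialization to $m=1$ then finish the proof. Be aware, however, that the paper does not prove this theorem at all: it is quoted in the background section as a known result for the class of Alencar--Matos, and the paper's own method for the analogous domination theorems it does prove (Theorem 3.2 for factorable strongly $p$-summing multilinear maps and Theorem 4.4 for polynomials) is to exhibit the class as $RS$-abstract $p$-summing and invoke the unified Pietsch Domination Theorem of Botelho--Pellegrino--Rueda and Pellegrino--Santos. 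Your route is the classical, self-contained separation argument that underlies that abstract theorem; it buys transparency and independence from the general machinery, at the cost of redoing the Ky Fan/Hahn--Banach bookkeeping that the unified theorem packages once and for all. If you wanted to match the paper's style, you would instead define the appropriate maps $R$ and $S$ on $K\times(X_{1}\times\cdots\times X_{n})^{\mathbb{N}}\times\{0\}$ and check the (trivial) hypotheses of the unified theorem.
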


\begin{corollary}
If $1\leq p\leq q<\infty,$ every $p$-semi-integral multilinear operator is $q$-semi-integral.
\end{corollary}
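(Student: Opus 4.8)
The plan is to deduce this inclusion directly from the Pietsch Domination theorem for $p$-semi-integral operators stated immediately above, exactly as the analogous inclusion for $p$-dominated operators follows from its own domination theorem. First I would assume $T$ is $p$-semi-integral and invoke that theorem to obtain a constant $C\geq 0$ and a regular probability measure $\mu$ on $\mathcal{B}(B_{X_{1}^{\ast}}\times\cdots\times B_{X_{n}^{\ast}})$ such that
\[
\left\Vert T(x_{1},\dots,x_{n})\right\Vert \leq C\left(\int_{B_{X_{1}^{\ast}}\times\cdots\times B_{X_{n}^{\ast}}}\mid\varphi_{1}(x_{1})\cdots\varphi_{n}(x_{n})\mid^{p}d\mu(\varphi_{1},\dots,\varphi_{n})\right)^{1/p}
\]
for all $x_{j}\in X_{j}$.

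The decisive observation is that $\mu$ is a \emph{probability} measure, so the associated $L_{r}(\mu)$-norms are monotone in $r$: for $1\leq p\leq q<\infty$ and any $\mu$-measurable $f$ one has $\Vert f\Vert_{L_{p}(\mu)}\leq\Vert f\Vert_{L_{q}(\mu)}$. This is the standard consequence of H\"older's inequality applied with the conjugate exponents $q/p$ and $(q/p)'$, using crucially that $\mu$ has total mass $1$. Applying this with the fixed integrand $f(\varphi_{1},\dots,\varphi_{n})=\mid\varphi_{1}(x_{1})\cdots\varphi_{n}(x_{n})\mid$ gives
\[
\left(\int\mid\varphi_{1}(x_{1})\cdots\varphi_{n}(x_{n})\mid^{p}d\mu\right)^{1/p}\leq\left(\int\mid\varphi_{1}(x_{1})\cdots\varphi_{n}(x_{n})\mid^{q}d\mu\right)^{1/q}.
\]

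Chaining the two inequalities yields the $q$-domination estimate
\[
\left\Vert T(x_{1},\dots,x_{n})\right\Vert \leq C\left(\int\mid\varphi_{1}(x_{1})\cdots\varphi_{n}(x_{n})\mid^{q}d\mu\right)^{1/q}
\]
with the same constant $C$ and the same measure $\mu$. By the converse direction of the Pietsch Domination theorem (this time for exponent $q$), this shows that $T$ is $q$-semi-integral, which completes the proof. I do not anticipate any genuine obstacle here: the only point requiring care is that the domination theorem delivers a probability measure rather than merely a finite one, which is precisely what makes the $L_{p}$--$L_{q}$ comparison run in the correct direction. A direct argument starting from the summing definition would also be possible, but it would in effect re-derive this same norm monotonicity by hand, so routing through the domination theorem is the cleaner route.
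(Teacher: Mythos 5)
Your argument is correct and is exactly the route the paper intends: the corollary is stated as an immediate consequence of the Pietsch Domination theorem for $p$-semi-integral operators, obtained by combining the $p$-domination inequality with the monotonicity of $L_{r}(\mu)$-norms for a probability measure $\mu$ and then invoking the converse direction with exponent $q$. No discrepancy with the paper's (implicit) proof.
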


This class is strongly connected to the class of $p$-dominated multilinear
operators. For example, in \cite{CD} it is shown that every $p$-semi integral
$n$-linear operator is $np$-dominated.

The following result shows that we cannot expect to lift coincidence results
of the linear case to dominated multilinear operators:

\begin{theorem}
[Jarchow, Palazuelos, P\'{e}rez-Garc\'{\i}a and Villanueva, 2007]%
\label{ffy}(\cite{Jar}) For every $n\geq3$ and every $p\geq1$ and every
infinite dimensional Banach space $X$ there exists a continuous $n$-linear
operator $T:X\times\cdots\times X\rightarrow\mathbb{K}$ that fails to be $p$-dominated.
\end{theorem}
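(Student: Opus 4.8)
The plan is to isolate a single explicit obstruction on a Hilbertian block and then to transplant it into an arbitrary $X$ by means of Dvoretzky's theorem. First I would reduce the number of factors from $n$ to $3$. Suppose $B\colon X\times X\times X\rightarrow\mathbb{K}$ is a continuous $3$-linear form that is not $p$-dominated; fix nonzero functionals $\varphi_{4},\dots,\varphi_{n}\in X^{\ast}$ and set
\[
T(x_{1},\dots,x_{n})=B(x_{1},x_{2},x_{3})\,\varphi_{4}(x_{4})\cdots\varphi_{n}(x_{n}).
\]
This $T$ is $n$-linear and continuous. If $T$ were $p$-dominated, the Pietsch--Geiss domination theorem would furnish measures $\mu_{1},\dots,\mu_{n}$; freezing $x_{4},\dots,x_{n}$ at points where $\varphi_{j}(x_{j})\neq0$ and absorbing the resulting nonzero constants into the domination constant would yield a Pietsch domination of $B$ by $\mu_{1},\mu_{2},\mu_{3}$, contradicting the choice of $B$. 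Hence it suffices to construct, on every infinite dimensional $X$, a continuous $3$-linear form that is not $p$-dominated.

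Next I would settle the model case $X=\ell_{2}$ with the diagonal form $D(x,y,z)=\sum_{i}x_{i}y_{i}z_{i}$. Hölder's inequality gives $|D(x,y,z)|\leq\|x\|_{\infty}\|y\|_{2}\|z\|_{2}\leq\|x\|_{2}\|y\|_{2}\|z\|_{2}$, so $D$ is continuous. Testing the inequality defining $p$-domination on $x_{j}^{1}=x_{j}^{2}=x_{j}^{3}=e_{j}$ for $j=1,\dots,m$ gives $D(e_{j},e_{j},e_{j})=1$, while the associated weak $\ell_{p}$ norm $\sup_{\|a\|_{2}\leq1}(\sum_{j\leq m}|a_{j}|^{p})^{1/p}$ equals $1$ when $p\geq2$ and $m^{1/p-1/2}$ when $1\leq p<2$. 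Consequently the best constant in
\[
\Big(\sum_{j=1}^{m}|D(e_{j},e_{j},e_{j})|^{p/3}\Big)^{3/p}\leq C\prod_{k=1}^{3}\sup_{\|a\|_{2}\leq1}\Big(\sum_{j=1}^{m}|a_{j}|^{p}\Big)^{1/p}
\]
is at least $m^{3/p}$ for $p\geq2$ and at least $m^{3/2}$ for $p<2$; in either case it is unbounded in $m$, so $D$ is not $p$-dominated for any $p$. This also explains the hypothesis $n\geq3$: the left-hand exponent produces $m^{n/p}$, which outgrows the product of weak norms precisely because $n\geq3$ (for $n=2$ the phenomenon can fail, since Grothendieck's theorem forces coincidences on $C(K)$-spaces).

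Finally I would transfer this to an arbitrary infinite dimensional $X$. By Dvoretzky's theorem, for every $k$ there is a subspace $E_{k}\subseteq X$ and a near-isometry $J_{k}\colon\ell_{2}^{k}\rightarrow E_{k}$. Because weak $p$-summing norms are intrinsic to the subspace on which they are computed --- every $\psi\in B_{E_{k}^{\ast}}$ extends, by Hahn--Banach, to an element of $B_{X^{\ast}}$ --- the lower bound above is \emph{local}: any continuous $3$-linear form $S$ on $X$ whose restriction to $E_{k}$ reproduces, up to the distortion of $J_{k}$, the diagonal form has domination constant bounded below by a positive multiple of $k^{3/p}$ (respectively $k^{3/2}$). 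The scheme is then to build forms $S_{k}$ on $X$ that act diagonally on $E_{k}$ while keeping $\|S_{k}\|$ under control, and to deduce the existence of a single non $p$-dominated form: since the $p$-dominated $3$-linear forms constitute a Banach space continuously embedded in $\mathcal{L}(^{3}X;\mathbb{K})$, if the two coincided as sets the open mapping theorem would make their norms equivalent, contradicting $d_{p}(S_{k})/\|S_{k}\|\rightarrow\infty$.

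The hard part will be precisely the control of the ambient norm $\|S_{k}\|$, and this is where the real content of the theorem lies. There is no norm-preserving Hahn--Banach extension for multilinear forms, so the diagonal form on $E_{k}$ cannot simply be extended off $E_{k}$; composing with a local projection $Q_{k}\colon X\rightarrow\ell_{2}^{k}$, so that $S_{k}=D\circ(Q_{k},Q_{k},Q_{k})$, only gives $\|S_{k}\|\leq\|Q_{k}\|^{3}$, and the relative projection constant $\|Q_{k}\|$ can be forced to grow like $\sqrt{k}$ in spaces far from Hilbert, such as $c_{0}$ or $C(K)$, so that the ratio fails to blow up. A more efficient estimate, splitting the three slots by Hölder with exponents $q_{1},q_{2},q_{3}$ satisfying $\sum_{l}1/q_{l}=1$ and interpolating the norms $\|Q_{k}\|_{X\rightarrow\ell_{q}^{k}}$, widens the range of $p$ that can be handled but still degrades for large $p$. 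The definitive construction therefore has to exploit the local structure of $X$ more carefully --- using disjointly supported near-Hilbertian blocks, or a random (Kahane--Salem--Zygmund type) $3$-linear form whose supremum norm is far smaller than its action on the diagonal --- so as to force the blow-up of $d_{p}(S_{k})/\|S_{k}\|$ uniformly over all $p\geq1$.
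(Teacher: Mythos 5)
The paper does not prove this statement at all: it is imported verbatim from Jarchow--Palazuelos--P\'erez-Garc\'{\i}a--Villanueva \cite{Jar} and used as a black box, so your attempt can only be judged on its own terms. The parts you do carry out are sound. The reduction from $n$ factors to $3$ by tensoring with functionals and freezing the last $n-3$ slots in the Pietsch--Geiss domination is correct; the computation showing that the diagonal trilinear form on $\ell_2$ forces a domination constant of at least $m^{3/p}$ (for $p\geq 2$) or $m^{3/2}$ (for $p<2$) is correct; and the closed-graph/open-mapping argument reducing the coincidence of classes to a uniform bound on $d_p(S_k)/\|S_k\|$ is the standard and correct way to finish, provided the forms $S_k$ exist.

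The genuine gap is exactly the step you flag yourself in the last paragraph: you never construct, on an \emph{arbitrary} infinite-dimensional $X$, continuous trilinear forms $S_k$ with $\|S_k\|$ controlled and restriction to a Dvoretzky block acting like the diagonal form. This is not a technicality to be deferred --- it is the entire content of the theorem. Dvoretzky's theorem gives the Hilbertian subspaces $E_k$, but as you correctly observe there is no norm-preserving Hahn--Banach extension for trilinear forms, and composing with projections onto $E_k$ costs a factor of $\|Q_k\|^3$, which in spaces such as $c_0$ grows fast enough to kill the lower bound $k^{3/2}$. Your proposed remedies (disjoint near-Hilbertian blocks, Kahane--Salem--Zygmund random forms) point in the right direction --- the actual proof in \cite{Jar} does combine local theory with KSZ-type random unimodular forms, routed through the notion of \emph{extendible} multilinear forms (every $p$-dominated form is extendible, and they show non-extendible trilinear forms exist on every infinite-dimensional space) --- but none of these is carried out, and the naive versions you sketch are shown by your own discussion to be insufficient. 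As it stands, the argument proves the theorem only for spaces containing well-complemented Hilbertian blocks (e.g.\ $\ell_2$ itself, or spaces with uniformly complemented $\ell_2^k$'s), not for every infinite-dimensional Banach space.
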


Since $p$-semi-integral $n$-linear operators are $np$-dominated, we have:

\begin{corollary}
For every $n\geq3$, every $p\geq1$ and every infinite dimensional Banach space
$X$ there exists a continuous $n$-linear operator $T:X\times\cdots\times
X\rightarrow\mathbb{K}$ that fails to be $p$-semi-integral.
\end{corollary}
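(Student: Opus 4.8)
The plan is to reduce the corollary directly to the theorem of Jarchow, Palazuelos, P\'erez-Garc\'ia and Villanueva (Theorem~\ref{ffy}), exploiting the inclusion recalled immediately above the statement: every $p$-semi-integral $n$-linear operator is $np$-dominated (\cite{CD}). Once the exponents are matched correctly, the argument is a one-line contrapositive.

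First I would set $q=np$. Since $n\geq 3$ and $p\geq 1$, we have $q=np\geq 3\geq 1$, so $q$ lies in the admissible range $\left[1,\infty\right)$ and Theorem~\ref{ffy} applies with $q$ in place of $p$. This produces a continuous $n$-linear operator $T:X\times\cdots\times X\rightarrow\mathbb{K}$ that fails to be $np$-dominated. I would then argue by contradiction: if $T$ were $p$-semi-integral, then by the cited inclusion it would be $np$-dominated, contradicting the defining property of $T$. Hence $T$ cannot be $p$-semi-integral, which is exactly the assertion of the corollary.

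There is essentially no obstacle here, since the content lies entirely in the two imported results. The only point deserving a moment's attention is verifying that the shifted exponent $np$ still falls within the range for which Theorem~\ref{ffy} is stated, and this is immediate from $n\geq 3$ and $p\geq 1$; indeed the hypothesis $n\geq 3$ is precisely what is needed both to invoke Theorem~\ref{ffy} and to guarantee $np\geq 1$. The corollary is thus the formal combination of the Jarchow--Palazuelos--P\'erez-Garc\'ia--Villanueva theorem with the semi-integral-to-dominated inclusion.
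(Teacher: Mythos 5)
Your argument is correct and is exactly the paper's (implicit) proof: apply Theorem~\ref{ffy} with exponent $np\geq 1$ to obtain an $n$-linear form that is not $np$-dominated, and then use the inclusion that every $p$-semi-integral $n$-linear operator is $np$-dominated to conclude it cannot be $p$-semi-integral. Your extra check that $np$ lies in the admissible range is the only detail the paper leaves unsaid.
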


So, in view of the previous result, it is obvious that we cannot expect a
Grothendieck type theorem for dominated or semi-integral operators. In this
direction, the classes of multiple summing multilinear operators
(\cite{Bombal, collec}), strongly multiple summing multilinear operators
(\cite{port}) and strongly summing multilinear operators (\cite{dimant}) are
other possible generalizations, with a quite better behavior if we are
interested in lifting coincidence theorems, like Grothendieck's theorem. But,
as a matter of fact, none of these classes lifts all the main properties of
absolutely summing linear operators to the multilinear setting.

In \cite{RuSaFact}, a variant of the notion of $p$-dominated polynomials which
satisfy (in a very natural form) a Pietsch factorization type theorem, is
introduced. A continuous $n$-homogeneous polynomial $P:X\rightarrow Y$ is
\textit{factorable $p$-dominated} if there is a $C\geq0$ such that for every
$x_{j}^{i}\in X$, and scalars $\lambda_{j}^{i}$, $1\leq j\leq m_{1}$, $1\leq
i\leq m_{2}$ and all positive integers $m_{1},m_{2},$ we have%
\[
\left(
%TCIMACRO{\dsum \limits_{j=1}^{m_{1}}}%
%BeginExpansion
{\displaystyle\sum\limits_{j=1}^{m_{1}}}
%EndExpansion
\left\Vert
%TCIMACRO{\dsum \limits_{i=1}^{m_{2}}}%
%BeginExpansion
{\displaystyle\sum\limits_{i=1}^{m_{2}}}
%EndExpansion
\lambda_{j}^{i}P\left(  x_{j}^{i}\right)  \right\Vert ^{p}\right)  ^{\frac
{1}{p}}\leq C\sup_{\varphi\in B_{X^{\ast}}}\left(
%TCIMACRO{\dsum \limits_{j=1}^{m_{1}}}%
%BeginExpansion
{\displaystyle\sum\limits_{j=1}^{m_{1}}}
%EndExpansion
\left\vert
%TCIMACRO{\dsum \limits_{i=1}^{m_{2}}}%
%BeginExpansion
{\displaystyle\sum\limits_{i=1}^{m_{2}}}
%EndExpansion
\lambda_{j}^{i}\varphi\left(  x_{j}^{i}\right)  ^{n}\right\vert ^{p}\right)
^{\frac{1}{p}}.
\]
The natural multilinear version of the notion of \textquotedblleft factorable
$p$-dominated polynomials\textquotedblright\ seems to be:

\begin{definition}
A continuous $n$-linear operator $T:X_{1}\times\cdots\times X_{n}\rightarrow
Y$ is factorable $p$-dominated if there is a constant $C\geq0$ such that for
every $x_{k,j}^{i}\in X_{k}$, and scalars $\lambda_{j}^{i}$, $1\leq j\leq
m_{1}$, $1\leq i\leq m_{2}$ and all positive integers $m_{1},m_{2},$ we have%
\[
\left(
%TCIMACRO{\dsum \limits_{j=1}^{m_{1}}}%
%BeginExpansion
{\displaystyle\sum\limits_{j=1}^{m_{1}}}
%EndExpansion
\left\Vert
%TCIMACRO{\dsum \limits_{i=1}^{m_{2}}}%
%BeginExpansion
{\displaystyle\sum\limits_{i=1}^{m_{2}}}
%EndExpansion
\lambda_{j}^{i}T\left(  x_{1,j}^{i},...,x_{n,j}^{i}\right)  \right\Vert
^{p}\right)  ^{\frac{1}{p}}\leq C\sup_{\substack{\varphi_{k}\in B_{X_{k}%
^{\ast}}\\k=1,...,n}}\left(
%TCIMACRO{\dsum \limits_{j=1}^{m_{1}}}%
%BeginExpansion
{\displaystyle\sum\limits_{j=1}^{m_{1}}}
%EndExpansion
\left\vert
%TCIMACRO{\dsum \limits_{i=1}^{m_{2}}}%
%BeginExpansion
{\displaystyle\sum\limits_{i=1}^{m_{2}}}
%EndExpansion
\lambda_{j}^{i}\varphi_{1}\left(  x_{1,j}^{i}\right)  \cdots\varphi_{n}\left(
x_{n,j}^{i}\right)  \right\vert ^{p}\right)  ^{\frac{1}{p}}.
\]
\bigskip
\end{definition}

These notions have some connection with the idea of weighted summability,
sketched in \cite{mathz}. It is likely that this class has a nice
factorization theorem (like its polynomial version) but a simple calculation
shows that any factorable $p$-dominated multilinear operator is $p$%
-semi-integral and thus we have:

\begin{proposition}
For every $n\geq3$ and every $p\geq1$ and every infinite dimensional Banach
space $X$ there exists a continuous $n$-linear operator $T:X\times\cdots\times
X\rightarrow\mathbb{K}$ that fails to be factorable $p$-dominated. A fortiori,
regardless of the Banach space $Y$, there exists a continuous $n$-linear
operator $T:X\times\cdots\times X\rightarrow Y$ that fails to be factorable
$p$-dominated.
\end{proposition}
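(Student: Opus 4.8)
The plan is to deduce the statement entirely from the Corollary obtained above from Theorem \ref{ffy}. The crucial observation, which is the ``simple calculation'' referred to in the text, is that \emph{every} factorable $p$-dominated $n$-linear operator is automatically $p$-semi-integral. Granting this inclusion, the first assertion is immediate: by that Corollary there is a continuous $n$-linear operator $T:X\times\cdots\times X\to\mathbb{K}$ that fails to be $p$-semi-integral, and the inclusion then forces the very same $T$ to fail being factorable $p$-dominated. Thus no new construction of a counterexample is needed; the one produced for the semi-integral case is reused verbatim.

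To establish the inclusion, I would simply specialize the defining inequality of factorability. Fix a factorable $p$-dominated $T$ with constant $C$ and, in its defining inequality, take $m_{2}=1$ and $\lambda_{j}^{1}=1$ for all $j$. Then each inner sum over $i$ collapses to a single term, so with the renaming $x_{j}^{k}:=x_{k,j}^{1}$ the left-hand side becomes $\bigl(\sum_{j=1}^{m_{1}}\|T(x_{j}^{1},\dots,x_{j}^{n})\|^{p}\bigr)^{1/p}$ and the right-hand side becomes $C\sup_{\varphi_{k}\in B_{X_{k}^{\ast}}}\bigl(\sum_{j=1}^{m_{1}}|\varphi_{1}(x_{j}^{1})\cdots\varphi_{n}(x_{j}^{n})|^{p}\bigr)^{1/p}$. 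This is precisely the $p$-semi-integral inequality, with the same constant $C$ and the supremum taken over $B_{X_{1}^{\ast}}\times\cdots\times B_{X_{n}^{\ast}}$. Hence $T$ is $p$-semi-integral, which is the desired inclusion.

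For the \emph{a fortiori} statement I would transport the scalar counterexample into an arbitrary $Y$. Fix $y_{0}\in Y$ with $\|y_{0}\|=1$ and let $T:X\times\cdots\times X\to\mathbb{K}$ be the scalar operator from the first part. Define $\widetilde{T}(x_{1},\dots,x_{n}):=T(x_{1},\dots,x_{n})\,y_{0}$. Since $\|\widetilde{T}(\cdots)\|=|T(\cdots)|$, every factorability inequality for $\widetilde{T}$ coincides with the corresponding one for $T$; therefore $\widetilde{T}$ is factorable $p$-dominated if and only if $T$ is, and $T$ is not.

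I do not anticipate any genuine obstacle: the entire difficulty has already been absorbed by Theorem \ref{ffy} and its Corollary. The only point requiring care is the elementary specialization $m_{2}=1$, $\lambda_{j}^{1}=1$, which must be written out to confirm that the factorable $p$-dominated inequality really contains the $p$-semi-integral inequality as a special case, and with no loss in the constant.
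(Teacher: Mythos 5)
Your proposal is correct and follows exactly the route the paper intends: the paper's entire ``proof'' is the remark that a simple calculation (the specialization $m_{2}=1$, $\lambda_{j}^{1}=1$, which you carry out correctly) shows every factorable $p$-dominated operator is $p$-semi-integral, after which the corollary to Theorem \ref{ffy} gives the counterexample. Your explicit treatment of the \emph{a fortiori} part via $\widetilde{T}=T(\cdot)\,y_{0}$ is a detail the paper leaves implicit, and it is sound.
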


So, since we are looking for classes that also lift coincidence results to the
multilinear setting, the class of factorable $p$-dominated multilinear
operators is not what we are searching.

A continuous $n$-linear operator $T:X_{1}\times\cdots\times X_{n}\rightarrow
Y$ is \textit{strongly $p$-summing} if there exists a constant $C\geq0$ such
that
\begin{equation}
\left(  \sum\limits_{j=1}^{m}\parallel T(x_{j}^{1},...,x_{j}^{n})\parallel
^{p}\right)  ^{1/p}\leq C\left(  \underset{\phi\in B_{\mathcal{L}%
(X_{1},...,X_{n};\mathbb{K})}}{\sup}\sum\limits_{j=1}^{m}\mid\phi(x_{j}%
^{1},...,x_{j}^{n})\mid^{p}\right)  ^{1/p}. \label{in}%
\end{equation}
for every $m\in\mathbb{N}$, $x_{j}^{k}\in X_{k}$ with $k=1,...,n$ and
$j=1,...,m.$

The class of strongly $p$-summing multilinear operators is due to V. Dimant
\cite{dimant} and according to \cite{pp0, pp3} it is perhaps the class that
best translates to the multilinear setting the properties of the original
linear concept. For example, a Grothendieck type theorem and a
Pietsch-Domination type theorem are valid:

\begin{theorem}
[Grothendieck-type theorem](\cite{dimant}) Every continuous $n$-linear
operator $T:\ell_{1}\times\cdots\times\ell_{1}\rightarrow\ell_{2}$ is strongly
$1$-summing.
\end{theorem}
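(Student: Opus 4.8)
The plan is to reduce the statement to the classical (linear) Grothendieck theorem, quoted among the cornerstones above, by linearizing $T$ through the projective tensor product. First I would pass from the $n$-linear operator $T:\ell_{1}\times\cdots\times\ell_{1}\rightarrow\ell_{2}$ to its linearization. By the universal property of the completed projective tensor product $Z:=\ell_{1}\widehat{\otimes}_{\pi}\cdots\widehat{\otimes}_{\pi}\ell_{1}$ ($n$ factors), there is a unique bounded linear operator $T_{L}:Z\rightarrow\ell_{2}$ with $T_{L}(x^{1}\otimes\cdots\otimes x^{n})=T(x^{1},\ldots,x^{n})$ and $\Vert T_{L}\Vert=\Vert T\Vert$. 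The next ingredient is the isometric identification $Z\cong\ell_{1}(\mathbb{N}^{n})$, which follows from the standard fact that the projective tensor product of $\ell_{1}$-spaces is again an $\ell_{1}$-space over the product index set; since $\mathbb{N}^{n}$ is countable, $Z$ is isometrically an $\ell_{1}$-space. Thus $T_{L}$ is simply a bounded linear operator from $\ell_{1}$ into $\ell_{2}$.

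Second, I would invoke the linear Grothendieck theorem stated above: every bounded linear operator from $\ell_{1}$ to $\ell_{2}$ is absolutely $1$-summing. Hence $T_{L}\in\Pi_{1}(Z;\ell_{2})$, and for every finite family $z_{1},\ldots,z_{m}\in Z$,
\[
\sum_{j=1}^{m}\Vert T_{L}(z_{j})\Vert\leq\pi_{1}(T_{L})\,\sup_{\phi\in B_{Z^{\ast}}}\sum_{j=1}^{m}\vert\phi(z_{j})\vert.
\]

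Third, I would translate this inequality back to $T$. Taking $z_{j}=x_{j}^{1}\otimes\cdots\otimes x_{j}^{n}$ turns the left-hand side into $\sum_{j}\Vert T(x_{j}^{1},\ldots,x_{j}^{n})\Vert$. For the right-hand side I would use the canonical isometric isomorphism $Z^{\ast}\cong\mathcal{L}(\ell_{1},\ldots,\ell_{1};\mathbb{K})$, under which a functional $\phi$ on $Z$ corresponds to the $n$-linear form $(y^{1},\ldots,y^{n})\mapsto\phi(y^{1}\otimes\cdots\otimes y^{n})$, so that $\phi(z_{j})=\phi(x_{j}^{1},\ldots,x_{j}^{n})$ and the unit balls correspond to one another. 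Substituting these identities yields exactly inequality \eqref{in} with $p=1$ and $C=\pi_{1}(T_{L})$, proving that $T$ is strongly $1$-summing.

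The routine part is the linearization together with the Grothendieck step; the point requiring care is the chain of isometric identifications---$Z\cong\ell_{1}(\mathbb{N}^{n})$ and $Z^{\ast}\cong\mathcal{L}(\ell_{1},\ldots,\ell_{1};\mathbb{K})\cong\ell_{\infty}(\mathbb{N}^{n})$---and in particular verifying that the supremum over $B_{Z^{\ast}}$ coincides exactly with the supremum over the unit ball of $n$-linear forms appearing in the definition of strongly $p$-summing. I note that only the easy implication is needed here, namely that the absolute $1$-summability of $T_{L}$ forces the strong $1$-summability of $T$, since the defining inequality for $T$ is tested only on elementary tensors; I would not need the converse passage from elementary tensors to arbitrary elements of $Z$.
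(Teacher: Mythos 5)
Your proof is correct, and it is exactly the standard argument behind the cited result: the paper itself gives no proof (it simply quotes Dimant), and Dimant's proof likewise linearizes through $\ell_{1}\widehat{\otimes}_{\pi}\cdots\widehat{\otimes}_{\pi}\ell_{1}\cong\ell_{1}$, applies the linear Grothendieck theorem, and restricts the summing inequality to elementary tensors using $Z^{\ast}\cong\mathcal{L}(\ell_{1},\ldots,\ell_{1};\mathbb{K})$. You also correctly isolate that only the easy direction of the correspondence between $T$ and $T_{L}$ is needed.
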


\begin{theorem}
[Pietsch Domination type theorem](\cite{dimant}) A continuous $n$-linear
operator $T:X_{1}\times\cdots\times X_{n}\rightarrow Y$ is strongly
$p$-summing if, and only if, there are a probability measure $\mu$ on
$B_{(X_{1}\widehat{\otimes}_{\pi}\cdots\widehat{\otimes}_{\pi}X_{n})^{\ast}}$,
with the weak-star topology, and a constant $C\geq0$ so that
\begin{equation}
\left\Vert T\left(  x_{1},...,x_{n}\right)  \right\Vert \leq C\left(
\int_{B_{(X_{1}\widehat{\otimes}_{\pi}\cdots\widehat{\otimes}_{\pi}%
X_{n})^{\ast}}}\left\vert \varphi\left(  x_{1}\otimes\cdots\otimes
x_{n}\right)  \right\vert ^{p}d\mu\left(  \varphi\right)  \right)  ^{\frac
{1}{p}} \label{7out08c}%
\end{equation}
for all $(x_{1},...,x_{n})\in X_{1}\times\cdots\times X_{n}.$
\end{theorem}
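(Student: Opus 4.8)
The plan is to reduce the statement to the classical Pietsch domination theorem by linearizing $T$. By the universal property of the projective tensor product, $\mathcal{L}(X_{1},\ldots,X_{n};\mathbb{K})$ is isometrically isomorphic to $(X_{1}\widehat{\otimes}_{\pi}\cdots\widehat{\otimes}_{\pi}X_{n})^{\ast}$, a multilinear functional $\phi$ corresponding to the linear functional $\varphi$ determined by $\varphi(x_{1}\otimes\cdots\otimes x_{n})=\phi(x_{1},\ldots,x_{n})$. Writing $z_{j}=x_{j}^{1}\otimes\cdots\otimes x_{j}^{n}$ and $K=B_{(X_{1}\widehat{\otimes}_{\pi}\cdots\widehat{\otimes}_{\pi}X_{n})^{\ast}}$ with its weak-star topology (a compact Hausdorff space by Banach--Alaoglu), inequality (\ref{in}) reads exactly $\big(\sum_{j}\|T(x_{j}^{1},\ldots,x_{j}^{n})\|^{p}\big)^{1/p}\le C\,\big(\sup_{\varphi\in K}\sum_{j}|\varphi(z_{j})|^{p}\big)^{1/p}$, so that $T$ is strongly $p$-summing precisely when its linearization obeys an absolutely $p$-summing type estimate evaluated on elementary tensors. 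The sufficiency of (\ref{7out08c}) is then the routine direction: integrating the pointwise bound over any finite family and using that $\mu$ is a probability measure gives $\sum_{j}\|T(x_{j}^{1},\ldots,x_{j}^{n})\|^{p}\le C^{p}\int_{K}\sum_{j}|\varphi(z_{j})|^{p}\,d\mu\le C^{p}\sup_{\psi\in K}\sum_{j}|\psi(z_{j})|^{p}$, which is (\ref{in}).

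For the necessity I would run the standard Pietsch separation argument on $C(K)$. To each finite family $(x_{j}^{k})$ I associate the continuous function $f(\varphi)=\sum_{j}\big[\|T(x_{j}^{1},\ldots,x_{j}^{n})\|^{p}-C^{p}|\varphi(z_{j})|^{p}\big]$ on $K$, where $C$ is the strongly summing constant. These functions form a convex cone $\mathcal{C}\subset C(K)$: closure under sums follows by concatenating families, while closure under positive scaling follows from the $n$-homogeneity of $T$, since multiplying a single coordinate by $\lambda^{1/p}$ scales both $\|T(\ldots)\|^{p}$ and $|\varphi(z_{j})|^{p}$ by $\lambda$. The reformulated inequality (\ref{in}) forces $\min_{K}f\le 0$ for every $f\in\mathcal{C}$, so $\mathcal{C}$ is disjoint from the open convex cone $\mathcal{D}=\{g\in C(K):\min_{K}g>0\}$.

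The Hahn--Banach separation theorem then yields a nonzero $\Lambda\in C(K)^{\ast}$ with $\Lambda(f)\le 0\le\Lambda(g)$ for all $f\in\mathcal{C}$ and $g\in\mathcal{D}$; testing on arbitrarily small positive constants shows $\Lambda\ge 0$, so by the Riesz representation theorem and normalization $\Lambda$ is a regular Borel probability measure $\mu$ on $K$. Applying $\Lambda(f)\le 0$ to the one-term family $(x_{1},\ldots,x_{n})$ gives $\|T(x_{1},\ldots,x_{n})\|^{p}\le C^{p}\int_{K}|\varphi(x_{1}\otimes\cdots\otimes x_{n})|^{p}\,d\mu$, which is precisely (\ref{7out08c}). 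The main obstacle is the bookkeeping that makes $\mathcal{C}$ a genuine convex cone, together with the verification that the separating functional is positive; everything else is formal. Alternatively, once (\ref{in}) has been recast in the linearized form above, the entire necessity direction follows at once from the abstract Pietsch domination framework.
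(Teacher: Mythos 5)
Your argument is correct, but it is worth noting that the paper does not prove this statement at all: it is quoted as background from Dimant's paper \cite{dimant}, and the only domination theorem the authors actually prove (Theorem \ref{p0}, the analogue for \emph{factorable} strongly $p$-summing operators) is obtained by plugging suitable maps $R$ and $S$ into the unified abstract Pietsch Domination Theorem of \cite{pp1, pp2, pp3}. Your proof is the classical self-contained route: identify $\mathcal{L}(X_{1},\ldots,X_{n};\mathbb{K})$ isometrically with $(X_{1}\widehat{\otimes}_{\pi}\cdots\widehat{\otimes}_{\pi}X_{n})^{\ast}$ so that the defining inequality (\ref{in}) becomes an absolutely-$p$-summing-type estimate on elementary tensors over the weak-star compact ball $K$, then run the cone-separation argument in $C(K)$. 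All the delicate points are handled correctly: the functions $f$ are weak-star continuous because each $z_{j}$ is a fixed tensor; closure of the cone under positive scaling via the substitution $x_{j}^{1}\mapsto\lambda^{1/p}x_{j}^{1}$ is the right device; positivity of the separating functional and the normalization $\Lambda(\mathbf{1})>0$ are standard but you flag them. The trade-off between the two routes is the usual one: the abstract framework the paper relies on (and which you mention as an alternative) reduces the whole necessity direction to checking two homogeneity-type hypotheses, at the cost of importing a black box, whereas your explicit separation argument is longer but self-contained and makes visible exactly where the structure of the class (concatenation of families, $n$-homogeneity of $T$) is used. Either is acceptable; yours is faithful to how the cited result is actually proved in \cite{dimant}.
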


\begin{corollary}
If $p\leq q$ then every strongly $p$-summing multilinear operator is strongly
$q$-summing.
\end{corollary}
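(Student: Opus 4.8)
The plan is to deduce the inclusion directly from the Pietsch Domination type theorem stated above, mirroring the proof of the Inclusion theorem in the linear case. Assume $T:X_{1}\times\cdots\times X_{n}\rightarrow Y$ is strongly $p$-summing. By that theorem there exist a constant $C\geq0$ and a probability measure $\mu$ on $B_{(X_{1}\widehat{\otimes}_{\pi}\cdots\widehat{\otimes}_{\pi}X_{n})^{\ast}}$, endowed with the weak-star topology, such that
\[
\left\Vert T\left(  x_{1},...,x_{n}\right)  \right\Vert \leq C\left(
\int_{B_{(X_{1}\widehat{\otimes}_{\pi}\cdots\widehat{\otimes}_{\pi}X_{n})^{\ast}}}\left\vert \varphi\left(  x_{1}\otimes\cdots\otimes x_{n}\right)  \right\vert ^{p}d\mu\left(  \varphi\right)  \right)  ^{1/p}
\]
for all $(x_{1},\ldots,x_{n})\in X_{1}\times\cdots\times X_{n}$.

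The key observation is that $\mu$ is a \emph{probability} measure, so the $L_{r}(\mu)$-norms are monotone nondecreasing in $r$. Concretely, since $p\leq q$, Jensen's inequality (equivalently, the contractive inclusion $L_{q}(\mu)\hookrightarrow L_{p}(\mu)$ for a probability measure) yields
\[
\left(  \int \left\vert f\right\vert ^{p}d\mu\right)  ^{1/p}\leq\left(  \int \left\vert f\right\vert ^{q}d\mu\right)  ^{1/q}
\]
for every $\mu$-measurable scalar function $f$. Applying this with $f(\varphi)=\varphi(x_{1}\otimes\cdots\otimes x_{n})$ and chaining it with the previous estimate gives
\[
\left\Vert T\left(  x_{1},...,x_{n}\right)  \right\Vert \leq C\left(\int_{B_{(X_{1}\widehat{\otimes}_{\pi}\cdots\widehat{\otimes}_{\pi}X_{n})^{\ast}}}\left\vert \varphi\left(  x_{1}\otimes\cdots\otimes x_{n}\right)  \right\vert ^{q}d\mu\left(  \varphi\right)  \right)^{1/q}
\]
for all $(x_{1},\ldots,x_{n})$, which is precisely the Pietsch domination inequality for the exponent $q$, with the \emph{same} measure $\mu$ and the same constant $C$.

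Finally, I would invoke the converse direction of the Pietsch Domination type theorem, now read for the exponent $q$: the existence of such a domination shows that $T$ is strongly $q$-summing. There is essentially no obstacle here, since the entire content reduces to the monotonicity of the $L_{r}$-norms over the probability space $\left(B_{(X_{1}\widehat{\otimes}_{\pi}\cdots\widehat{\otimes}_{\pi}X_{n})^{\ast}},\mu\right)$. The only point meriting care is that the integrand $\varphi\mapsto\varphi(x_{1}\otimes\cdots\otimes x_{n})$ is independent of the exponent, so the Pietsch measure furnished for $p$ can be reused verbatim for $q$; this is exactly what makes the argument go through with no change of measure and no loss in the constant.
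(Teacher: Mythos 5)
Your proof is correct and follows exactly the route the paper intends: the corollary is stated immediately after the Pietsch Domination type theorem precisely because it follows from reusing the Pietsch measure together with the monotonicity of $L_{r}(\mu)$-norms for a probability measure $\mu$. No issues.
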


It is not hard to prove that a Dvoretzky-Rogers Theorem is also valid for this class:

\begin{theorem}
[Dvoretzky-Rogers type theorem]Every continuous $n$-linear operator
$T:X\times\cdots\times X\rightarrow X$ is strongly $p$-summing if, and only
if, $\dim X<\infty.$
\end{theorem}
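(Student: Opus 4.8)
The plan is to prove both implications by reducing to the classical (linear) Dvoretzky--Rogers theorem stated above, via the linearization of $T$ on the projective tensor product. First I would fix notation: write $\Xi=X\widehat{\otimes}_{\pi}\cdots\widehat{\otimes}_{\pi}X$ ($n$ factors) and recall the canonical isometric identification $\Xi^{\ast}=\mathcal{L}(X,\dots,X;\mathbb{K})$, under which an $n$-linear form $\phi$ and the associated functional $\Phi\in\Xi^{\ast}$ satisfy $\phi(x_{1},\dots,x_{n})=\Phi(x_{1}\otimes\cdots\otimes x_{n})$ and $\|\phi\|=\|\Phi\|$. Let $T_{L}\colon\Xi\to X$ be the linearization of $T$, i.e. the unique bounded linear map with $T_{L}(x_{1}\otimes\cdots\otimes x_{n})=T(x_{1},\dots,x_{n})$. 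With this dictionary, setting $z_{j}=x_{j}^{1}\otimes\cdots\otimes x_{j}^{n}$, the right-hand supremum in \eqref{in} is exactly $\sup_{\Phi\in B_{\Xi^{\ast}}}\sum_{j}|\Phi(z_{j})|^{p}$, while the left-hand sum is $\sum_{j}\|T_{L}(z_{j})\|^{p}$. Thus $T$ is strongly $p$-summing precisely when the $p$-summing inequality for $T_{L}$ holds along elementary tensors.

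For the implication ``$\dim X<\infty\Rightarrow$ strongly $p$-summing'' I would argue as follows. If $\dim X<\infty$ then $\Xi$ is finite dimensional, so by the linear Dvoretzky--Rogers theorem $\Pi_{p}(\Xi;\Xi)=\mathcal{L}(\Xi;\Xi)$; in particular the identity of $\Xi$ is absolutely $p$-summing, and hence, by the ideal property of $\Pi_{p}$ (writing $T_{L}=T_{L}\circ\mathrm{id}_{\Xi}$), $T_{L}\in\Pi_{p}(\Xi;X)$. Applying the $p$-summing inequality for $T_{L}$ to the sequences $z_{j}=x_{j}^{1}\otimes\cdots\otimes x_{j}^{n}$ and translating back through the dictionary above yields \eqref{in} with $C=\pi_{p}(T_{L})$. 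Since $T$ was arbitrary, every continuous $n$-linear operator into $X$ is strongly $p$-summing.

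For the converse I would prove the contrapositive: if $\dim X=\infty$, I would exhibit a continuous $n$-linear $T\colon X\times\cdots\times X\to X$ failing \eqref{in}. By the linear Dvoretzky--Rogers theorem $\Pi_{p}(X;X)\subsetneq\mathcal{L}(X;X)$, so fix $u\in\mathcal{L}(X;X)\setminus\Pi_{p}(X;X)$. Choose, via Hahn--Banach, a norm-one $\psi\in X^{\ast}$ and a unit vector $a\in X$ with $\psi(a)=1$, and define $T(x_{1},\dots,x_{n})=\psi(x_{1})\cdots\psi(x_{n-1})\,u(x_{n})$, which is clearly continuous and $n$-linear with values in $X$. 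Testing \eqref{in} on sequences $(a,\dots,a,x_{j})$ gives, on the left, $\big(\sum_{j}\|u(x_{j})\|^{p}\big)^{1/p}$. For the right-hand side, the key observation is that for each $\phi\in B_{\mathcal{L}(X,\dots,X;\mathbb{K})}$ the functional $x\mapsto\phi(a,\dots,a,x)$ has norm at most $\|a\|^{n-1}=1$, whence $\sup_{\phi}\sum_{j}|\phi(a,\dots,a,x_{j})|^{p}\le\sup_{\eta\in B_{X^{\ast}}}\sum_{j}|\eta(x_{j})|^{p}$. So, were $T$ strongly $p$-summing with constant $C$, we would obtain $\big(\sum_{j}\|u(x_{j})\|^{p}\big)^{1/p}\le C\,\sup_{\eta\in B_{X^{\ast}}}\big(\sum_{j}|\eta(x_{j})|^{p}\big)^{1/p}$ for all finite sequences $(x_{j})$, i.e. $u\in\Pi_{p}(X;X)$, contradicting the choice of $u$.

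Both implications are genuinely consequences of the linear theorem, so the remaining work is essentially bookkeeping through the linearization dictionary. The step I expect to require the most care is the upper estimate in the converse: one must verify that substituting the fixed vectors $a$ into $n-1$ slots of a norm-$\le 1$ multilinear form yields a linear functional of norm $\le\|a\|^{n-1}$, so that the multilinear ``weak-$p$'' quantity on the right of \eqref{in} is dominated by the ordinary linear one. This is exactly the mechanism that transfers the failure of $p$-summability from $u$ up to $T$, and it is where the choice of $\psi$ with $\psi(a)=1$ is used.
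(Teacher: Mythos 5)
Your proposal is correct, and it follows the route the paper itself has in mind: the paper states this theorem without proof (``It is not hard to prove\dots''), but your converse argument --- building $T(x_{1},\dots,x_{n})=\psi(x_{1})\cdots\psi(x_{n-1})u(x_{n})$ from a non-$p$-summing $u$ and freezing $n-1$ slots at a vector $a$ with $\psi(a)=1$ to transfer the failure of the linear inequality --- is exactly the mechanism the paper uses in the proof of its Lemma \ref{sin} for the factorable class. The forward direction via the isometry $(X\widehat{\otimes}_{\pi}\cdots\widehat{\otimes}_{\pi}X)^{\ast}=\mathcal{L}(X,\dots,X;\mathbb{K})$ and the finite-dimensionality of the projective tensor product is also sound.
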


A property fulfilled by the class of absolutely summing operators which is not
lifted to the multilinear framework by the notion of strong summability is the
weak compactness. In fact, it is well known that every absolutely $p$-summing
linear operator is weakly compact, but Carando and Dimant have shown that
there exist strongly $p$-summing multilinear operators that fail to be weakly
compact \cite{na}. This result implies that a natural version of the Pietsch
Factorization Theorem is not valid for strongly summing multilinear operators,
as we will see below.

Suppose that the following factorization theorem holds: $T:X_{1}\times
\cdots\times X_{n}\rightarrow Y$ is strongly $p$-summing if and only if there
is a regular Borel probability measure $\mu$ on $B_{(X_{1}\widehat{\otimes
}_{\pi}\cdots\widehat{\otimes}_{\pi}X_{n})^{\ast}}$, with the weak-star
topology, a closed subspace $Z_{p}$ of $L_{p}\left(  B_{(X_{1}\widehat
{\otimes}_{\pi}\cdots\widehat{\otimes}_{\pi}X_{n})^{\ast}},\mu\right)  $ and a
continuous linear operator $\widehat{T}:Z_{p}\rightarrow Y$ such that%
\[
j_{p}\circ i_{X_{1}\times\cdots\times X_{n}}(X_{1}\times\cdots\times
X_{n})\subset Z_{p}\text{ and }\widehat{T}\circ j_{p}\circ i_{X_{1}%
\times\cdots\times X_{n}}=T,
\]
where%
\[
i_{X_{1}\times\cdots\times X_{n}}:X_{1}\times\cdots\times X_{n}\rightarrow
C\left(  B_{(X_{1}\widehat{\otimes}_{\pi}\cdots\widehat{\otimes}_{\pi}%
X_{n})^{\ast}}\right)
\]
is the canonical $n$-linear map $i_{X_{1}\times\cdots\times X_{n}}%
(x_{1},...,x_{n})\left(  \varphi\right)  =\varphi(x_{1}\otimes\cdots\otimes
x_{n})$ and%
\[
j_{p}:C\left(  B_{(X_{1}\widehat{\otimes}_{\pi}\cdots\widehat{\otimes}_{\pi
}X_{n})^{\ast}}\right)  \rightarrow L_{p}\left(  B_{(X_{1}\widehat{\otimes
}_{\pi}\cdots\widehat{\otimes}_{\pi}X_{n})^{\ast}},\mu\right)
\]
is the canonical linear inclusion.

Since $j_{p}$ is absolutely $p$-summing (and thus weakly compact), then we
conclude that the set $j_{p}\left(  i_{X_{1}\times\cdots\times X_{n}}%
(B_{X_{1}}\times\cdots\times B_{X_{n}})\right)  $ is relatively weakly compact
in $Z_{p}.$ Since $\widehat{T}$ is continuous and linear, then $T(B_{X_{1}%
},...,B_{X_{n}})=\widehat{T}\left(  j_{p}\left(  i_{X_{1}\times\cdots\times
X_{n}}(B_{X_{1}}\times\cdots\times B_{X_{n}})\right)  \right)  $ is relatively
weakly compact in $Y$ and thus $T$ is weakly compact, but this is not true in
general (\cite{na}).

In this paper we combine the idea of factorable summability from
\cite{RuSaFact} with the notion of strongly $p$-summing multilinear operators
and we show that the new class we introduce recovers all these lacks suffered
by the former multilinear extensions.

\section{Factorable strongly $p$-summing multilinear operators}

\label{multilinear}

The following definition is inspired in ideas from \cite{RuSaFact}, adapted to
the notion of strongly summing multilinear operators:

\begin{definition}
A continuous $n$-linear operator $T:X_{1}\times\cdots\times X_{n}\rightarrow
Y$ is \textit{factorable strongly $p$-summing} if there is a constant $C\geq0$
such that for every $x_{k,j}^{i}\in X_{k}$, and scalars $\lambda_{k,j}^{i}$,
$1\leq j\leq m_{1}$, $1\leq i\leq m_{2}$ and all positive integers
$m_{1},m_{2},$ we have%
\[
\left(
%TCIMACRO{\dsum \limits_{j=1}^{m_{1}}}%
%BeginExpansion
{\displaystyle\sum\limits_{j=1}^{m_{1}}}
%EndExpansion
\left\Vert
%TCIMACRO{\dsum \limits_{i=1}^{m_{2}}}%
%BeginExpansion
{\displaystyle\sum\limits_{i=1}^{m_{2}}}
%EndExpansion
\lambda_{j}^{i}T\left(  x_{1,j}^{i},...,x_{n,j}^{i}\right)  \right\Vert
^{p}\right)  ^{\frac{1}{p}}\leq C\sup_{\left\Vert \varphi\right\Vert \leq
1}\left(
%TCIMACRO{\dsum \limits_{j=1}^{m_{1}}}%
%BeginExpansion
{\displaystyle\sum\limits_{j=1}^{m_{1}}}
%EndExpansion
\left\vert
%TCIMACRO{\dsum \limits_{i=1}^{m_{2}}}%
%BeginExpansion
{\displaystyle\sum\limits_{i=1}^{m_{2}}}
%EndExpansion
\lambda_{j}^{i}\varphi\left(  x_{1,j}^{i},...,x_{n,j}^{i}\right)  \right\vert
^{p}\right)  ^{\frac{1}{p}}.
\]
where the supremum is taken over all the continuous $n$-linear functionals
$\varphi:X_{1}\times\cdots\times X_{n}\rightarrow\mathbb{K}$ of norm less or
equal than $1$. The class of all factorable strongly $p$-summing $n$-linear
operators $T:X_{1}\times\cdots\times X_{n}\rightarrow Y$ is denoted by
$\Pi_{FSt,p}(X_{1},\ldots,X_{n};Y)$ and endowed with the norm $\Vert\cdot
\Vert_{FSt,p}$, where $\Vert T\Vert_{FSt,p}$ is given by the infimum of all
constant $C$ fulfilling the above inequality.
\end{definition}

Note that if $T$ is factorable strongly $p$-summing then making $m_{2}=1$ and
$\lambda_{j}^{1}=1$ for all $j=1,\ldots,m_{1}$, we have%
\[
\left\Vert \left(  T\left(  x_{1,j}^{1},...,x_{n,j}^{1}\right)  \right)
_{j=1}^{m_{1}}\right\Vert _{p}\leq C\sup_{\left\Vert \varphi\right\Vert \leq
1}\left(
%TCIMACRO{\dsum \limits_{j=1}^{m_{1}}}%
%BeginExpansion
{\displaystyle\sum\limits_{j=1}^{m_{1}}}
%EndExpansion
\left\vert \varphi\left(  x_{1,j}^{1},...,x_{n,j}^{1}\right)  \right\vert
^{p}\right)  ^{\frac{1}{p}},
\]
i.e., $T$ is strongly $p$-summing. In particular, whenever $n=1$, $\Pi
_{FSt,p}(X_{1};Y)=\Pi_{p}(X_{1};Y)$ is the class of all absolutely $p$-summing
operators from $X_{1}$ to $Y$.

The ideal property is straightforward. It is also trivial that every
scalar-valued $n$-linear operator is factorable strongly $p$-summing.
Straightforward calculations show that this class forms a Banach multi-ideal.

%\begin{proposition}
%\bigskip
%\end{proposition}

As we will see in Section \ref{wealth}, this class preserves the nice
properties of the class of strongly summing multilinear operators and has
extra desirable properties: weak compactness and a factorization theorem.

\begin{theorem}
[Pietsch-Domination type theorem]\label{p0} A continuous $n$-linear operator
$T:X_{1}\times\cdots\times X_{m}\rightarrow Y$ is factorable strongly
$p$-summing if and only if there is a regular probability measure $\mu$ on
$B_{(X_{1}\widehat{\otimes}_{\pi}\cdots\widehat{\otimes}_{\pi}X_{n})^{\ast}},$
endowed with the weak-star topology, and a constant $C\geq0$, such that%
\[
\left\Vert
%TCIMACRO{\dsum \limits_{i=1}^{m}}%
%BeginExpansion
{\displaystyle\sum\limits_{i=1}^{m}}
%EndExpansion
\lambda^{i}T\left(  x_{1}^{i},...,x_{n}^{i}\right)  \right\Vert \leq C\left(
%TCIMACRO{\dint \nolimits_{B_{(X_{1}\widehat{\otimes}_{\pi}\cdots
%\widehat{\otimes}_{\pi}X_{n})^{\ast}}}}%
%BeginExpansion
{\displaystyle\int\nolimits_{B_{(X_{1}\widehat{\otimes}_{\pi}\cdots
\widehat{\otimes}_{\pi}X_{n})^{\ast}}}}
%EndExpansion
\left\vert
%TCIMACRO{\dsum \limits_{i=1}^{m}}%
%BeginExpansion
{\displaystyle\sum\limits_{i=1}^{m}}
%EndExpansion
\lambda^{i}\varphi\left(  x_{1}^{i},...,x_{n}^{i}\right)  \right\vert ^{p}%
d\mu\left(  \varphi\right)  \right)  ^{\frac{1}{p}}.
\]

\end{theorem}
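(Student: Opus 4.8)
The plan is to reduce the statement to the classical (linear) Pietsch Domination Theorem quoted in the background, by linearizing $T$ through the projective tensor product. Write $Z:=X_{1}\widehat{\otimes}_{\pi}\cdots\widehat{\otimes}_{\pi}X_{n}$ and recall the canonical isometric isomorphism $\mathcal{L}(X_{1},\dots,X_{n};\mathbb{K})=Z^{\ast}$, under which a norm-one $n$-linear functional $\varphi$ corresponds to a functional $\Phi\in B_{Z^{\ast}}$ with $\varphi(x_{1},\dots,x_{n})=\Phi(x_{1}\otimes\cdots\otimes x_{n})$; this identification is a homeomorphism for the weak-star topologies, so the supremum over $\|\varphi\|\le 1$ in the definition is exactly a supremum over $\Phi\in B_{Z^{\ast}}$. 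Moreover $T$ linearizes to a bounded operator $T_{L}:Z\to Y$ determined by $T_{L}(x_{1}\otimes\cdots\otimes x_{n})=T(x_{1},\dots,x_{n})$.

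First I would show that $T$ is factorable strongly $p$-summing if and only if $T_{L}\in\Pi_{p}(Z;Y)$. Setting $w_{j}:=\sum_{i}\lambda_{j}^{i}\,x_{1,j}^{i}\otimes\cdots\otimes x_{n,j}^{i}$ and absorbing each scalar $\lambda_{j}^{i}$ into the first slot of the corresponding elementary tensor, the vectors $w_{j}$ run over arbitrary elements of the algebraic tensor product $X_{1}\otimes\cdots\otimes X_{n}$, while $\sum_{i}\lambda_{j}^{i}T(x_{1,j}^{i},\dots,x_{n,j}^{i})=T_{L}(w_{j})$ and $\sum_{i}\lambda_{j}^{i}\varphi(x_{1,j}^{i},\dots,x_{n,j}^{i})=\Phi(w_{j})$. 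Thus the defining inequality says precisely that $(\sum_{j}\|T_{L}(w_{j})\|^{p})^{1/p}\le C\sup_{\Phi\in B_{Z^{\ast}}}(\sum_{j}|\Phi(w_{j})|^{p})^{1/p}$ for all finite families $w_{j}$ in the dense subspace $X_{1}\otimes\cdots\otimes X_{n}$. Since $T_{L}$ is continuous and the weak $p$-norm $(w_{j})\mapsto\sup_{\Phi\in B_{Z^{\ast}}}(\sum_{j}|\Phi(w_{j})|^{p})^{1/p}$ is dominated by, hence continuous with respect to, the product norm, a routine density argument extends this inequality to all finite families in $Z$; that is exactly the statement $T_{L}\in\Pi_{p}(Z;Y)$.

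With this equivalence in hand, the forward implication follows by applying the linear Pietsch Domination Theorem to $T_{L}:Z\to Y$: it furnishes a regular Borel probability measure $\mu$ on $(B_{Z^{\ast}},\sigma(Z^{\ast},Z))$ and $C\ge 0$ with $\|T_{L}(z)\|\le C(\int_{B_{Z^{\ast}}}|\Phi(z)|^{p}\,d\mu(\Phi))^{1/p}$ for every $z\in Z$. Specializing to $z=\sum_{i}\lambda^{i}\,x_{1}^{i}\otimes\cdots\otimes x_{n}^{i}$ and translating $\Phi$ back to the $n$-linear functional $\varphi$ yields the asserted inequality verbatim. For the converse, given such a $\mu$, I would apply the integral estimate to each $z_{j}=\sum_{i}\lambda_{j}^{i}\,x_{1,j}^{i}\otimes\cdots\otimes x_{n,j}^{i}$, raise to the $p$-th power, sum over $j$, and interchange the (finite) sum with the integral; bounding the integrand pointwise by $\sup_{\|\varphi\|\le1}\sum_{j}|\sum_{i}\lambda_{j}^{i}\varphi(x_{1,j}^{i},\dots,x_{n,j}^{i})|^{p}$ and using that $\mu$ is a probability measure recovers the defining inequality, so $T$ is factorable strongly $p$-summing.

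The only steps that require genuine care are the isometric (and weak-star homeomorphic) identification $\mathcal{L}(X_{1},\dots,X_{n};\mathbb{K})=Z^{\ast}$, and the passage, via density of $X_{1}\otimes\cdots\otimes X_{n}$ in $Z$ together with continuity of the weak $p$-norm, from the inequality tested on elementary-tensor combinations to the genuine absolute $p$-summability of $T_{L}$ on all of $Z$. Everything else is a direct translation through the linearization $T_{L}$, and the substance of the theorem is carried entirely by its linear counterpart.
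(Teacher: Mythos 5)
Your proof is correct, but it takes a genuinely different route from the paper. The authors prove Theorem \ref{p0} by observing that factorable strong $p$-summability is an instance of the $RS$-abstract $p$-summing framework: they define the two maps $R$ and $S$ on $B_{(X_{1}\widehat{\otimes}_{\pi}\cdots\widehat{\otimes}_{\pi}X_{n})^{\ast}}\times(\mathbb{K}\times X_{1}\times\cdots\times X_{n})^{\mathbb{N}}\times\{0\}$ and invoke the unified Pietsch Domination Theorem of Botelho--Pellegrino--Rueda and Pellegrino--Santos, so their argument is a one-line appeal to general machinery (with the verification of that theorem's hypotheses left implicit). You instead linearize $T$ to $T_{L}$ on $Z=X_{1}\widehat{\otimes}_{\pi}\cdots\widehat{\otimes}_{\pi}X_{n}$, prove that the defining inequality is exactly the absolute $p$-summability of $T_{L}$ tested on the dense algebraic tensor product (the padding of the inner sums to a common $m_{2}$ and the continuity of both sides in the projective norm make the density step legitimate), and then quote the classical linear domination theorem. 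Both arguments are sound; yours is more elementary and self-contained, and it has the added benefit of establishing en route the identity $\Pi_{FSt,p}=\Pi_{p}\circ\mathcal{L}$, i.e.\ that $T$ is factorable strongly $p$-summing if and only if its linearization is absolutely $p$-summing --- a fact the paper only records later (after Corollary \ref{idealcomp} and in Theorem \ref{linearization} for polynomials) and which underlies the factorization theorem and weak compactness. The price is that your argument leans on the isometric, weak-star compatible identification $\mathcal{L}(X_{1},\dots,X_{n};\mathbb{K})=Z^{\ast}$, which is standard but should be cited explicitly.
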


\begin{proof}
The notion of factorable strongly $p$-summing multilinear operator is
precisely the concept of $RS$-abstract $p$-summing (see \cite{pp1, pp2, pp3})
for%
\[
R:B_{(X_{1}\widehat{\otimes}_{\pi}\cdots\widehat{\otimes}_{\pi}X_{n})^{\ast}%
}\times\left(  \mathbb{K}\times X_{1}\times\cdots\times X_{n}\right)
^{\mathbb{N}}\times\left\{  0\right\}  \rightarrow\left[  0,\infty\right)
\]
given by%
\[
R\left(  \varphi,\left(  \lambda^{1},x_{1}^{1},...,x_{n}^{1}\right)
,...,\left(  \lambda^{m},x_{1}^{m},...,x_{n}^{m}\right)  ,0\right)
=\left\vert
%TCIMACRO{\dsum \limits_{i=1}^{m}}%
%BeginExpansion
{\displaystyle\sum\limits_{i=1}^{m}}
%EndExpansion
\lambda^{i}\varphi\left(  x_{1}^{i}\otimes\cdots\otimes x_{n}^{i}\right)
\right\vert
\]
and%
\[
S:\mathcal{L}\left(  X_{1},...,X_{n};Y\right)  \times\left(  \mathbb{K}\times
X_{1}\times\cdots\times X_{n}\right)  ^{\mathbb{N}}\times\left\{  0\right\}
\rightarrow\left[  0,\infty\right)
\]
given by%
\[
S\left(  T,\left(  \lambda^{1},x_{1}^{1},...,x_{n}^{1}\right)  ,...,\left(
\lambda^{m},x_{1}^{m},...,x_{n}^{m}\right)  ,0\right)  =\left\Vert
%TCIMACRO{\dsum \limits_{i=1}^{m}}%
%BeginExpansion
{\displaystyle\sum\limits_{i=1}^{m}}
%EndExpansion
\lambda^{i}T\left(  x_{1}^{i},...,x_{n}^{i}\right)  \right\Vert .
\]
Since $R$ and $S$ satisfy the hypotheses of the general Pietsch Domination
Theorem, the result follows straightforwardly.
\end{proof}

\begin{theorem}
[Pietsch-Factorization type theorem]A continuous $n$-linear operator
$T:X_{1}\times\cdots\times X_{n}\rightarrow Y$ is factorable strongly
$p$-summing if and only if there exist a regular probability measure $\mu$ on
$B_{(X_{1}\widehat{\otimes}_{\pi}\cdots\widehat{\otimes}_{\pi}X_{n})^{\ast}},$
endowed with the weak-star topology, a constant $C\geq0$, a closed subspace
$Z_{p}$ of $L_{p}\left(  B_{(X_{1}\widehat{\otimes}_{\pi}\cdots\widehat
{\otimes}_{\pi}X_{n})^{\ast}},\mu\right)  $ and a continuous linear operator
$\widehat{T}:Z_{p}\rightarrow Y$ such that%
\[
j_{p}\circ i_{X_{1}\times\cdots\times X_{n}}(X_{1}\times\cdots\times
X_{n})\subset Z_{p}\text{ and }\widehat{T}\circ j_{p}\circ i_{X_{1}%
\times\cdots\times X_{n}}=T,
\]
where%
\[
i_{X_{1}\times\cdots\times X_{n}}:X_{1}\times\cdots\times X_{m}\rightarrow
C\left(  B_{(X_{1}\widehat{\otimes}_{\pi}\cdots\widehat{\otimes}_{\pi}%
X_{n})^{\ast}}\right)
\]
is the canonical $n$-linear map $i_{X_{1}\times\cdots\times X_{n}}%
(x_{1},...,x_{n})\left(  \varphi\right)  =\varphi(x_{1}\otimes\cdots\otimes
x_{n})$ and%
\[
j_{p}:C\left(  B_{(X_{1}\widehat{\otimes}_{\pi}\cdots\widehat{\otimes}_{\pi
}X_{n})^{\ast}}\right)  \rightarrow L_{p}\left(  B_{(X_{1}\widehat{\otimes
}_{\pi}\cdots\widehat{\otimes}_{\pi}X_{n})^{\ast}},\mu\right)
\]
is the canonical linear inclusion.
\end{theorem}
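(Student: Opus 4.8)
The plan is to establish the Pietsch-Factorization type theorem by reducing the multilinear situation to the classical linear factorization theorem, using the canonical $n$-linear map $i_{X_1\times\cdots\times X_n}$ as a bridge into $C(B_{(X_1\widehat{\otimes}_\pi\cdots\widehat{\otimes}_\pi X_n)^\ast})$. The key observation is that the factorable strong $p$-summing inequality for $T$, when combined with the Pietsch-Domination theorem already proved (Theorem \ref{p0}), expresses exactly the condition that makes the relevant factorization through $L_p(\mu)$ possible.

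First I would prove the sufficiency direction, since it is the easier half. Assuming the factorization $\widehat{T}\circ j_p\circ i_{X_1\times\cdots\times X_n}=T$ exists, I would check the defining inequality directly. For any choice of vectors $x_{k,j}^i$ and scalars $\lambda_j^i$, I would compute
\[
\left\Vert\sum_{i=1}^{m_2}\lambda_j^i T(x_{1,j}^i,\ldots,x_{n,j}^i)\right\Vert
=\left\Vert\widehat{T}\left(\sum_{i=1}^{m_2}\lambda_j^i\,(j_p\circ i_{X_1\times\cdots\times X_n})(x_{1,j}^i,\ldots,x_{n,j}^i)\right)\right\Vert,
\]
bound it by $\Vert\widehat{T}\Vert$ times the $L_p(\mu)$-norm of the inner sum, then sum over $j$ and recognize that the $L_p(\mu)$-norm of $\sum_i\lambda_j^i\,i_{X_1\times\cdots\times X_n}(x_{1,j}^i,\ldots)$ is exactly $\left(\int|\sum_i\lambda_j^i\varphi(x_{1,j}^i\otimes\cdots\otimes x_{n,j}^i)|^p\,d\mu\right)^{1/p}$, which is dominated by the supremum over $\Vert\varphi\Vert\le1$. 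This yields the required inequality with $C=\Vert\widehat{T}\Vert$.

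For the necessity direction, I would start from the Pietsch-Domination inequality supplied by Theorem \ref{p0}, which gives a measure $\mu$ with
\[
\left\Vert\sum_{i=1}^m\lambda^i T(x_1^i,\ldots,x_n^i)\right\Vert\le C\left(\int\left\vert\sum_{i=1}^m\lambda^i\varphi(x_1^i,\ldots,x_n^i)\right\vert^p d\mu(\varphi)\right)^{1/p}.
\]
The crucial point is that this says precisely that on the image $M=j_p\circ i_{X_1\times\cdots\times X_n}(X_1\times\cdots\times X_n)$, and more importantly on the linear span of that image inside $L_p(\mu)$, the assignment sending $\sum_i\lambda^i\,(j_p\circ i)(x_1^i,\ldots,x_n^i)$ to $\sum_i\lambda^i T(x_1^i,\ldots,x_n^i)$ is well defined and bounded: if the $L_p(\mu)$-representative vanishes then so does the target, and the operator norm is at most $C$. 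I would therefore define $Z_p$ to be the closure in $L_p(\mu)$ of this linear span and let $\widehat{T}$ be the unique continuous linear extension to $Z_p$ of this bounded map; that $\widehat{T}\circ j_p\circ i_{X_1\times\cdots\times X_n}=T$ holds by construction on generators and extends by continuity.

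The main obstacle, and the step deserving the most care, is verifying that $\widehat{T}$ is genuinely \emph{well defined} on the span, i.e. that the inequality from Theorem \ref{p0} controls the target norm purely in terms of the $L_p(\mu)$-norm of the representative and not in terms of the particular coefficients $\lambda^i$ and vectors $x_k^i$ used to express it. This is exactly where the factorable (rather than merely strong) summing hypothesis is essential: the domination inequality is stated for arbitrary finite linear combinations $\sum_i\lambda^i\varphi(x_1^i\otimes\cdots\otimes x_n^i)$, so two representations of the same element of $L_p(\mu)$ differ by something whose $L_p(\mu)$-norm is zero, forcing the difference of the $T$-values to have norm zero as well. Once well-definedness and the norm bound $\Vert\widehat{T}\Vert\le C$ are secured, boundedness and the continuous extension to the closed subspace $Z_p$ are routine, and the inclusion $j_p\circ i_{X_1\times\cdots\times X_n}(X_1\times\cdots\times X_n)\subset Z_p$ holds by definition of $Z_p$ as a closure.
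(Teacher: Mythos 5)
Your forward direction is essentially identical to the paper's proof: both take $\mu$ from the Pietsch--Domination theorem (Theorem \ref{p0}), define $\widehat{T}$ on the linear span of $j_{p}\circ i_{X_{1}\times\cdots\times X_{n}}(X_{1}\times\cdots\times X_{n})$ by sending $\sum_{i}\lambda_{i}\,(j_{p}\circ i)(x_{1}^{i},\ldots,x_{n}^{i})$ to $\sum_{i}\lambda_{i}T(x_{1}^{i},\ldots,x_{n}^{i})$, use the domination inequality (applied to the difference of two representations, which is exactly where the \emph{factorable} hypothesis enters) to get well-definedness and $\Vert\widehat{T}\Vert\leq C$, and extend to $Z_{p}=\overline{W_{p}}$. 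The paper dismisses the converse as immediate, and your write-up of it contains one misordered step: bounding each term $\left(\int\left\vert\sum_{i}\lambda_{j}^{i}\varphi(x_{1,j}^{i}\otimes\cdots\otimes x_{n,j}^{i})\right\vert^{p}d\mu\right)^{1/p}$ by a supremum \emph{before} summing over $j$ produces $\sum_{j}\sup_{\Vert\varphi\Vert\leq1}\vert\cdots\vert^{p}$, which dominates, rather than is dominated by, the required quantity $\sup_{\Vert\varphi\Vert\leq1}\sum_{j}\vert\cdots\vert^{p}$; the correct order is to sum over $j$, interchange the finite sum with the integral, and only then use that $\mu$ is a probability measure on the unit ball to pass to the supremum. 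With that one-line repair the proposal is correct and follows the same route as the paper.
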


\begin{proof}
Suppose that $T$ is factorable strongly $p$-summing. Let $\mu$ be the measure
given by the Pietsch Domination Theorem (Theorem \ref{p0}) applied to $T$. Let
$W_{p}$ be the subspace of $L_{p}\left(  B_{(X_{1}\widehat{\otimes}_{\pi
}\cdots\widehat{\otimes}_{\pi}X_{n})^{\ast}},\mu\right)  $ given by the linear
span of $j_{p}\circ i_{X_{1}\times\cdots\times X_{m}}(X_{1}\times\cdots\times
X_{m}).$ Define the linear operator $\widehat{T}:W_{p}\rightarrow Y$ by%
\[
\widehat{T}\left(  z\right)  =%
%TCIMACRO{\dsum \limits_{i=1}^{n}}%
%BeginExpansion
{\displaystyle\sum\limits_{i=1}^{n}}
%EndExpansion
\lambda_{i}T\left(  x_{1}^{i},...,x_{n}^{i}\right)
\]
for%
\[
z=%
%TCIMACRO{\dsum \limits_{i=1}^{n}}%
%BeginExpansion
{\displaystyle\sum\limits_{i=1}^{n}}
%EndExpansion
\lambda_{i}\langle\cdot,\left(  x_{1}^{i}\otimes\cdots\otimes x_{n}%
^{i}\right)  \rangle\in W_{p}.
\]
Note that $\widehat{T}$ is well-defined. In fact, if
\[
z_{1}=%
%TCIMACRO{\dsum \limits_{i=1}^{m_{1}}}%
%BeginExpansion
{\displaystyle\sum\limits_{i=1}^{m_{1}}}
%EndExpansion
\lambda_{i}\langle\cdot,\left(  x_{1}^{i}\otimes\cdots\otimes x_{n}%
^{i}\right)  \rangle\text{ and }z_{2}=%
%TCIMACRO{\dsum \limits_{i=1}^{m_{2}}}%
%BeginExpansion
{\displaystyle\sum\limits_{i=1}^{m_{2}}}
%EndExpansion
\alpha_{i}\langle\cdot,\left(  y_{1}^{i}\otimes\cdots\otimes y_{n}^{i}\right)
\rangle
\]
coincide in $W_{p}$, then considering
\[
w:=%
%TCIMACRO{\dsum \limits_{i=1}^{m_{1}}}%
%BeginExpansion
{\displaystyle\sum\limits_{i=1}^{m_{1}}}
%EndExpansion
\lambda_{i}\langle\cdot,\left(  x_{1}^{i}\otimes\cdots\otimes x_{n}%
^{i}\right)  \rangle-%
%TCIMACRO{\dsum \limits_{i=1}^{m_{2}}}%
%BeginExpansion
{\displaystyle\sum\limits_{i=1}^{m_{2}}}
%EndExpansion
\alpha_{i}\langle\cdot,\left(  y_{1}^{i}\otimes\cdots\otimes y_{n}^{i}\right)
\rangle,
\]
we have $w=0$ almost everywhere in $W_{p}$, i.e.,%
\[%
%TCIMACRO{\dint \nolimits_{B_{(X_{1}\widehat{\otimes}_{\pi}\cdots
%\widehat{\otimes}_{\pi}X_{n})^{\ast}}}}%
%BeginExpansion
{\displaystyle\int\nolimits_{B_{(X_{1}\widehat{\otimes}_{\pi}\cdots
\widehat{\otimes}_{\pi}X_{n})^{\ast}}}}
%EndExpansion
\left\vert
%TCIMACRO{\dsum \limits_{i=1}^{m_{1}}}%
%BeginExpansion
{\displaystyle\sum\limits_{i=1}^{m_{1}}}
%EndExpansion
\lambda_{i}\varphi\left(  x_{1}^{i}\otimes\cdots\otimes x_{n}^{i}\right)  -%
%TCIMACRO{\dsum \limits_{i=1}^{m_{2}}}%
%BeginExpansion
{\displaystyle\sum\limits_{i=1}^{m_{2}}}
%EndExpansion
\alpha_{i}\varphi\left(  y_{1}^{i}\otimes\cdots\otimes y_{n}^{i}\right)
\right\vert ^{p}d\mu\left(  \varphi\right)  =0.
\]
Thus, from the domination theorem,%
\begin{align*}
&  \left\Vert
%TCIMACRO{\dsum \limits_{i=1}^{m_{1}}}%
%BeginExpansion
{\displaystyle\sum\limits_{i=1}^{m_{1}}}
%EndExpansion
\lambda_{i}T\left(  x_{1}^{i},...,x_{n}^{i}\right)  -%
%TCIMACRO{\dsum \limits_{i=1}^{m_{2}}}%
%BeginExpansion
{\displaystyle\sum\limits_{i=1}^{m_{2}}}
%EndExpansion
\alpha_{i}T\left(  y_{1}^{i},...,y_{n}^{i}\right)  \right\Vert \\
&  \leq C\left(
%TCIMACRO{\dint \nolimits_{B_{(X_{1}\widehat{\otimes}_{\pi}\cdots
%\widehat{\otimes}_{\pi}X_{n})^{\ast}}}}%
%BeginExpansion
{\displaystyle\int\nolimits_{B_{(X_{1}\widehat{\otimes}_{\pi}\cdots
\widehat{\otimes}_{\pi}X_{n})^{\ast}}}}
%EndExpansion
\left\vert
%TCIMACRO{\dsum \limits_{i=1}^{m_{1}}}%
%BeginExpansion
{\displaystyle\sum\limits_{i=1}^{m_{1}}}
%EndExpansion
\lambda_{i}\varphi\left(  x_{1}^{i}\otimes\cdots\otimes x_{n}^{i}\right)  -%
%TCIMACRO{\dsum \limits_{i=1}^{m_{2}}}%
%BeginExpansion
{\displaystyle\sum\limits_{i=1}^{m_{2}}}
%EndExpansion
\alpha_{i}\varphi\left(  y_{1}^{i}\otimes\cdots\otimes y_{n}^{i}\right)
\right\vert ^{p}d\mu\left(  \varphi\right)  \right)  ^{\frac{1}{p}}=0
\end{align*}
and we conclude that
\[
\widehat{T}(z_{1})-\widehat{T}(z_{2})=%
%TCIMACRO{\dsum \limits_{i=1}^{m_{1}}}%
%BeginExpansion
{\displaystyle\sum\limits_{i=1}^{m_{1}}}
%EndExpansion
\lambda_{i}T\left(  x_{1}^{i},...,x_{n}^{i}\right)  -%
%TCIMACRO{\dsum \limits_{i=1}^{m_{2}}}%
%BeginExpansion
{\displaystyle\sum\limits_{i=1}^{m_{2}}}
%EndExpansion
\alpha_{i}T\left(  y_{1}^{i},...,y_{n}^{i}\right)  =0.
\]
Note also that for $z=%
%TCIMACRO{\dsum \limits_{i=1}^{m}}%
%BeginExpansion
{\displaystyle\sum\limits_{i=1}^{m}}
%EndExpansion
\lambda_{i}\langle\cdot,\left(  x_{1}^{i}\otimes\cdots\otimes x_{n}%
^{i}\right)  \rangle\in W_{p}$ we have%
\begin{align*}
\left\Vert \widehat{T}\left(  z\right)  \right\Vert  &  =\left\Vert
%TCIMACRO{\dsum \limits_{i=1}^{m}}%
%BeginExpansion
{\displaystyle\sum\limits_{i=1}^{m}}
%EndExpansion
\lambda_{i}T\left(  x_{1}^{i},...,x_{n}^{i}\right)  \right\Vert \\
&  \leq C\left(
%TCIMACRO{\dint \nolimits_{B_{(X_{1}\widehat{\otimes}_{\pi}\cdots
%\widehat{\otimes}_{\pi}X_{n})^{\ast}}}}%
%BeginExpansion
{\displaystyle\int\nolimits_{B_{(X_{1}\widehat{\otimes}_{\pi}\cdots
\widehat{\otimes}_{\pi}X_{n})^{\ast}}}}
%EndExpansion
\left\vert
%TCIMACRO{\dsum \limits_{i=1}^{m}}%
%BeginExpansion
{\displaystyle\sum\limits_{i=1}^{m}}
%EndExpansion
\lambda_{i}\varphi\left(  x_{1}^{i}\otimes\cdots\otimes x_{n}^{i}\right)
\right\vert ^{p}d\mu\left(  \varphi\right)  \right)  ^{\frac{1}{p}}\\
&  =C\left\Vert z\right\Vert _{L_{p}\left(  \mu\right)  }%
\end{align*}
and $\widehat{T}$ is continuous. It is obvious that from the very definition
of $\widehat{T}$ we have $\widehat{T}\circ j_{p}\circ i_{X_{1}\times
\cdots\times X_{n}}=T$. Now we extend $\widehat{T}$ to $Z_{p}=\overline{W_{p}%
}.$ The converse is immediate.
\end{proof}

\section{Factorable strongly $p$-summing polynomials}

\label{polynomials}

The $m$-fold symmetric tensor product of $X$ is the linear span of all tensors
of the form $x\otimes\cdots\otimes x$, $x\in X$, and is denoted by
$\otimes^{m,s}X$. This space is endowed with the $s$-projective tensor norm,
defined as
\[
\pi_{s}(z)=\inf\{\sum_{j=1}^{k}|\lambda_{j}|\Vert x_{j}\Vert^{n}%
:k\in\mathbb{N},z=\sum_{j=1}^{k}\lambda_{j}x_{j}\otimes\cdots\otimes x_{j}\},
\]
for $z\in\otimes_{m,s}X$. Let $\hat{\otimes}_{\pi_{s}}^{m,s}X$ denote the
completion of $\otimes_{\pi_{s}}^{m,s}X$.

Given $P\in{\mathcal{P}}(^{m}X;Y)$, the linearization of $P$ is the unique
linear operator $P_{L,s}:\hat\otimes^{m,s}_{\pi_{s}} X\to Y$ such that
$P_{L,s}(x\otimes\cdots\otimes x)=P(x)$ for all $x\in X$. Ryan \cite{Ryan}
proved that the correspondence $P\leftrightarrow P_{L,s}$ establishes a
isometric isomorphism between the space ${\mathcal{P}}(^{m}X)$, endowed with
the usual sup norm, and the strong dual of $\hat\otimes^{m,s}_{\pi_{s}} X$.
Another map associated to $P\in{\mathcal{P}}(^{m}X;Y)$ is the unique
continuous symmetric $m$-linear mapping $\check P$ that satisfies $\check
P(x,\ldots, x)=P(x)$, for all $x\in X$. It is well known that $\|\check
P\|\leq c(m,X)\|P\|$ for all $P\in{\mathcal{P}}(^{m}X)$, where $c(m,X)$ is the
$m$-th polarization constant of $X$. For the general theory of homogeneous
polynomials we refer to \cite{dineen} and \cite{mujica}.

Concomitantly to multilinear mappings, factorable strongly $p$-summing
homogeneous polynomials can be introduced. Our aim is to prove that both
classes coincide in the sense that a polynomial is factorable strongly
$p$-summing if and only if its associated symmetric multilinear mapping is
factorable strongly $p$-summing. Moreover, we will see the deep relationship
between factorable strong summability and absolute summability by proving
that, for an homogeneous polynomial, it is equivalent that the polynomial is
factorable strongly summing to that its linearization is an absolutely summing
operator. To attain this purpose, we will show that both, factorable strongly
$p$-summing polynomials and factorable strongly $p$-summing multilinear
operators, form composition ideals.

\begin{definition}
A continuous $n$-homogeneous polynomial $P:X\rightarrow Y$ is
\textit{factorable strongly $p$-summing} if there is a $C\geq0$ such that for
every $x_{j}^{i}\in X$, and scalars $\lambda_{j}^{i}$, $1\leq j\leq m_{1}$,
$1\leq i\leq m_{2}$ and all positive integers $m_{1},m_{2},$ we have that
\[
\|(\sum_{i=1}^{m_{2}} \lambda_{j}^{i} P(x_{j}^{i}))_{j}\|_{p}\leq C
\sup_{\|q\|\leq1, q\in{\mathcal{P}}(^{m}X)}(\sum_{j=1}^{m_{1}}|\sum
_{i=1}^{m_{2}}\lambda_{j}^{i} q(x_{j}^{i})|^{p})^{1/p}.
\]
The class of all factorable strongly $p$-summing $m$-homogeneous polynomials
from $X$ to $Y$ is denoted by ${\mathcal{P}}_{FSt,p}(^{m}X;Y)$ and endowed
with the norm $\|\cdot\|_{FSt,p}$ given by the infimum of all constants $C$
fulfilling the above inequality.
\end{definition}

It is clear that factorable $p$-dominated polynomials are factorable strongly
$p$-summing. An easy calculation shows the following ideal property:

\begin{proposition}
\label{polid} If $P\in{\mathcal{P}}_{FSt,p}(^{m}X;Y)$ and $u:G\to X$, $v:Y\to
Z$ are continuous linear operators then $v\circ P\circ u\in{\mathcal{P}%
}_{FSt,p}(^{m}G;Z)$ and $\|v\circ P\circ u\|_{FSt,p}\leq\| v\| \cdot
\|P\|_{FSt,p}\|u\|^{m}$.
\end{proposition}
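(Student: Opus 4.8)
The plan is to verify directly that $Q:=v\circ P\circ u$ satisfies the defining inequality of $\mathcal{P}_{FSt,p}(^{m}G;Z)$ with constant $\|v\|\,\|P\|_{FSt,p}\,\|u\|^{m}$, by transporting arbitrary finite data $(g_{j}^{i},\lambda_{j}^{i})$ in $G$ through the three maps one factor at a time. First I would note that $Q$ is genuinely an $m$-homogeneous polynomial, being the composition of the $m$-homogeneous $P$ with the linear maps $u$ and $v$. For each fixed $j$, linearity of $v$ gives $\sum_{i}\lambda_{j}^{i}Q(g_{j}^{i})=v\big(\sum_{i}\lambda_{j}^{i}P(u(g_{j}^{i}))\big)$, so applying $\|v(\cdot)\|\le\|v\|\,\|\cdot\|$ and taking the $\ell_{p}$-norm over $j$ yields
\[
\Big\|\big(\textstyle\sum_{i}\lambda_{j}^{i}Q(g_{j}^{i})\big)_{j}\Big\|_{p}\le\|v\|\,\Big\|\big(\textstyle\sum_{i}\lambda_{j}^{i}P(x_{j}^{i})\big)_{j}\Big\|_{p},
\]
where $x_{j}^{i}:=u(g_{j}^{i})\in X$. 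This isolates the factor $\|v\|$.

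Second, I would apply the hypothesis $P\in\mathcal{P}_{FSt,p}(^{m}X;Y)$ to the vectors $x_{j}^{i}$ and the same scalars, bounding the right-hand side by
\[
\|v\|\,\|P\|_{FSt,p}\sup_{\|q\|\le1,\,q\in\mathcal{P}(^{m}X)}\Big(\sum_{j}\big|\textstyle\sum_{i}\lambda_{j}^{i}q(u(g_{j}^{i}))\big|^{p}\Big)^{1/p}.
\]
The remaining task is to convert this supremum over test polynomials $q$ on $X$, evaluated at $u(g_{j}^{i})$, into a supremum over test polynomials on $G$ evaluated at $g_{j}^{i}$; this is the step I expect to carry the actual content.

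Third, for this conversion I would use that precomposition with $u$ maps $\mathcal{P}(^{m}X)$ into $\mathcal{P}(^{m}G)$: for any $q\in\mathcal{P}(^{m}X)$ the map $q\circ u$ is an $m$-homogeneous polynomial on $G$ with $q(u(g))=(q\circ u)(g)$ and $\|q\circ u\|\le\|q\|\,\|u\|^{m}$. Hence, as $q$ ranges over the unit ball of $\mathcal{P}(^{m}X)$, the polynomial $q\circ u$ ranges inside the ball of radius $\|u\|^{m}$ in $\mathcal{P}(^{m}G)$; since the quantity $\big(\sum_{j}|\sum_{i}\lambda_{j}^{i}\,\tilde q(g_{j}^{i})|^{p}\big)^{1/p}$ is positively homogeneous of degree one in the test polynomial $\tilde q$, I can factor out $\|u\|^{m}$ and dominate the supremum by $\|u\|^{m}\sup_{\|\tilde q\|\le1,\,\tilde q\in\mathcal{P}(^{m}G)}$ of the same expression at the $g_{j}^{i}$ (the degenerate case $u=0$ being trivial). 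Combining the three estimates gives exactly the inequality defining $\mathcal{P}_{FSt,p}(^{m}G;Z)$ for $Q$, and passing to the infimum over admissible constants for $P$ produces the norm bound $\|v\circ P\circ u\|_{FSt,p}\le\|v\|\,\|P\|_{FSt,p}\,\|u\|^{m}$. The only point demanding genuine care is this last inclusion-and-homogeneity argument for the supremum; the rest is a straightforward unwinding of the definition.
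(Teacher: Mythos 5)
Your proof is correct, and it is precisely the ``easy calculation'' the paper alludes to without writing out: the paper states Proposition \ref{polid} with no proof beyond that remark, and your three-step unwinding (pull $\|v\|$ out by linearity, apply the defining inequality of $P$ at the points $u(g_j^i)$, then absorb $\|u\|^m$ by noting $q\circ u$ lies in the ball of radius $\|u\|^m$ of $\mathcal{P}(^{m}G)$ and using homogeneity of the supremum expression) is the standard argument. No issues.
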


It is not difficult to complete Proposition \ref{polid} and show that
factorable strongly $n$-homogeneous polynomials form an ideal of polynomials
(for the definition of ideal of polynomials we refer to \cite{BBJP}).

Dimant \cite{dimant} introduced the class of \textit{strongly $p$-summing}
$m$-homogeneous polynomials from $X$ to $Y$ as those $m$-homogeneous
polynomials $P:X\to Y$ that satisfy that there exists $K>0$ such that for any
$n\in\mathbb{N}$ and any $x_{1},\ldots, x_{n}\in X$,
\[
(\sum_{j=1}^{n}\|P(x_{j})\|^{p})^{1/p}\leq K \sup_{\|q\|\leq1, q\in
{\mathcal{P}}(^{m}X)} (\sum_{j=1}^{n} |q(x_{j}))|^{p})^{1/p}.
\]
In \cite[Proposition 3.2]{dimant} it is proved that if the linearization
$P_{L,s}$ of $P\in{\mathcal{P}}(^{m}X;Y)$ is absolutely $p$-summing then $p$
is strongly $p$-summing. However, the converse is not true (see \cite[Example
3.3]{na}). The reason, as for $p$-dominated polynomials, is that not every
strongly $p$-summing polynomial is weakly compact. So, once again, the lack of
connection with weak compactness turns out to be a deep inconvenience in the
way that strongly $p$-summing polynomials generalize absolutely $p$-summing
linear operators. Even if a domination holds also for strongly $p$-summing
polynomials \cite[Proposition 3.2]{dimant}, no factorization theorem is
expected. Let us prove a factorization theorem for factorable strongly
$p$-summing polynomials. We first need a domination theorem, that is obtained
as a particular case of \cite[Theorem~2.2]{pp1}. We denote by $\delta:X\to
C(B_{{\mathcal{P}}(^{m}X)})$ the $m$-homogeneous polynomial given by
$\delta(x):=\delta_{x}:B_{{\mathcal{P}}(^{m}X)}\to\mathbb{K}$, where
$\delta_{x}(P):=P(x)$. Considering that the space of continuous $m$%
-homogeneous polynomials is a dual space (see \cite{Ryan}), its closed unit
ball $B_{{\mathcal{P}}(^{m}X)}$ is a weak-star compact set.

\begin{theorem}
[Pietsch-Domination type theorem]\label{dominationpol} Let $P\in{\mathcal{P}%
}(^{m}X;Y)$. Then $P$ is factorable strongly $p$-summing if and only if there
exists a regular Borel probability measure $\mu$ on $B_{\mathcal{P}(^{m}X)}$,
endowed with the weak-star topology, such that
\[
\| \sum_{i=1}^{k} \lambda^{i} P(x^{i})\|\leq C(\int_{B_{{\mathcal{P}}(^{m}X)}}
|\sum_{i=1}^{k} \lambda^{i} q(x^{i})|^{p} \, d\mu)^{1/p}
\]
for all $x^{1},\ldots, x^{k} \in X$ and $\lambda^{1},\ldots, \lambda^{k}%
\in\mathbb{K}$.
\end{theorem}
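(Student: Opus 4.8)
The plan is to obtain the statement as a direct instance of the unified Pietsch Domination Theorem of \cite{pp1}, following verbatim the strategy already used in the proof of Theorem \ref{p0} for the multilinear case. The only structural difference is that the compact index set is now $K:=B_{\mathcal{P}(^{m}X)}$ endowed with the weak-star topology, which is compact precisely because $\mathcal{P}(^{m}X)$ is a dual space (the strong dual of $\hat{\otimes}^{m,s}_{\pi_{s}}X$, by Ryan \cite{Ryan}).

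I would then specialize the abstract maps $R$ and $S$ to the present setting. Taking as admissible data the finite blocks $(\lambda^{i},x^{i})\in\mathbb{K}\times X$ together with the single point of $\{0\}$, I set
\[
R\left(q,\left(\lambda^{1},x^{1}\right),\ldots,\left(\lambda^{k},x^{k}\right),0\right)=\left\vert\sum_{i=1}^{k}\lambda^{i}q\left(x^{i}\right)\right\vert
\]
and
\[
S\left(P,\left(\lambda^{1},x^{1}\right),\ldots,\left(\lambda^{k},x^{k}\right),0\right)=\left\Vert\sum_{i=1}^{k}\lambda^{i}P\left(x^{i}\right)\right\Vert.
\]
With this choice the defining inequality of factorable strongly $p$-summing polynomials is literally the $RS$-abstract $p$-summing condition (the inner sum over $i$ being absorbed into the very definition of $R$ and $S$), so once the hypotheses of \cite[Theorem~2.2]{pp1} are checked the desired measure $\mu$ on $K$ and the domination inequality drop out.

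The hypotheses to verify are light, and the one requiring an actual observation is the continuity, for each fixed block, of the map $q\mapsto R\left(q,(\lambda^{1},x^{1}),\ldots,(\lambda^{k},x^{k}),0\right)$ on $K$: under Ryan's duality each evaluation $q\mapsto q(x^{i})$ coincides with the action of $q$ on the predual element $x^{i}\otimes\cdots\otimes x^{i}\in\hat{\otimes}^{m,s}_{\pi_{s}}X$, hence is weak-star continuous, and so are the finite linear combination and its modulus. The remaining monotonicity and scaling conditions on $R$ and $S$ hold trivially because the $G$-slot has been collapsed to $\{0\}$. Applying the abstract theorem then yields the forward implication. The converse is routine and needs no machinery: given the domination inequality one raises it to the $p$-th power, sums over the outer index, interchanges the finite sum with the integral, and bounds the integrand pointwise by the supremum over $\Vert q\Vert\leq 1$ -- legitimate since $\mu$ is a probability measure supported on $B_{\mathcal{P}(^{m}X)}$ -- recovering the defining inequality. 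Thus the real content is merely the continuity check above; I expect no genuine obstacle here, the effort lying entirely in correctly packaging the definition into the unified framework.
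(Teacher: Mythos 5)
Your proposal matches the paper's proof exactly: the authors also obtain the theorem as a particular case of the unified Pietsch Domination Theorem of \cite[Theorem~2.2]{pp1}, with the same specialization of $R$ and $S$ as in the proof of Theorem \ref{p0} and the same use of the weak-star compactness of $B_{\mathcal{P}(^{m}X)}$ coming from Ryan's duality. Your additional remarks on the continuity check and the converse direction are correct and simply make explicit what the paper leaves to the reader.
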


\begin{proof}
It is a particular case of \cite[Theorem~2.2]{pp1} analogous to the proof of
Theorem \ref{p0}.
\end{proof}

We shall need the following result to prove the sufficiency of the
Factorization Theorem. Besides, Proposition~\ref{comp} will be the key for our
purposes to obtain that factorable strongly $p$-summing homogeneous
polynomials form a composition ideal.

\begin{proposition}
\label{comp} If $Q\in{\mathcal{P}}(^{m}G;X)$ and $u:X\to Y$ is an absolutely
$p$-summing linear operator, then $u\circ Q\in{\mathcal{P}}_{FSt,p}(^{m}G;Y)$
and $\|u\circ Q\|_{FSt,p}\leq\pi_{p}(u)\|Q\|$.
\end{proposition}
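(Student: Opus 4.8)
The plan is to verify the defining inequality for $\mathcal{P}_{FSt,p}(^{m}G;Y)$ directly, exploiting the linearity of $u$ together with its summing inequality and a linearization of the scalar test functionals through $Q$. Fix arbitrary vectors $x_{j}^{i}\in G$ and scalars $\lambda_{j}^{i}$ with $1\leq j\leq m_{1}$, $1\leq i\leq m_{2}$. First I would rewrite the quantity to be estimated: since $u$ is linear, for each fixed $j$ one has
\[
\sum_{i=1}^{m_{2}}\lambda_{j}^{i}(u\circ Q)(x_{j}^{i})=u\Big(\sum_{i=1}^{m_{2}}\lambda_{j}^{i}Q(x_{j}^{i})\Big)=u(y_{j}),
\]
where $y_{j}:=\sum_{i=1}^{m_{2}}\lambda_{j}^{i}Q(x_{j}^{i})\in X$. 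Thus the left-hand side of the factorable strongly $p$-summing inequality for $u\circ Q$ equals exactly $\big(\sum_{j=1}^{m_{1}}\|u(y_{j})\|^{p}\big)^{1/p}$.

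Next I would invoke that $u\in\Pi_{p}(X;Y)$, so applying the defining inequality of absolutely $p$-summing operators to the finite family $(y_{j})_{j=1}^{m_{1}}$ yields
\[
\Big(\sum_{j=1}^{m_{1}}\|u(y_{j})\|^{p}\Big)^{1/p}\leq\pi_{p}(u)\,\sup_{\psi\in B_{X^{\ast}}}\Big(\sum_{j=1}^{m_{1}}|\psi(y_{j})|^{p}\Big)^{1/p}.
\]
The crucial observation—and the only nonroutine point—is how to match the right-hand side against the supremum over scalar $m$-homogeneous polynomials on $G$ appearing in the definition of $\|\cdot\|_{FSt,p}$. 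For each $\psi\in B_{X^{\ast}}$, the composition $q_{\psi}:=\psi\circ Q$ is a continuous $m$-homogeneous polynomial $G\to\mathbb{K}$, because $\psi$ is linear and $Q$ is $m$-homogeneous; moreover $\|q_{\psi}\|\leq\|\psi\|\,\|Q\|\leq\|Q\|$. By linearity of $\psi$,
\[
\psi(y_{j})=\sum_{i=1}^{m_{2}}\lambda_{j}^{i}\psi(Q(x_{j}^{i}))=\sum_{i=1}^{m_{2}}\lambda_{j}^{i}q_{\psi}(x_{j}^{i}).
\]

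Finally I would combine these: bounding $\psi(y_{j})$ as above and normalizing $q_{\psi}$ (when $\|Q\|\neq0$, the case $Q=0$ being trivial), the inner supremum over $B_{X^{\ast}}$ is dominated by $\|Q\|$ times the supremum over the unit ball of $\mathcal{P}(^{m}G)$, giving
\[
\Big(\sum_{j=1}^{m_{1}}\|u(y_{j})\|^{p}\Big)^{1/p}\leq\pi_{p}(u)\,\|Q\|\sup_{\|q\|\leq1,\,q\in\mathcal{P}(^{m}G)}\Big(\sum_{j=1}^{m_{1}}\Big|\sum_{i=1}^{m_{2}}\lambda_{j}^{i}q(x_{j}^{i})\Big|^{p}\Big)^{1/p}.
\]
Since the vectors and scalars were arbitrary, this is precisely the inequality showing $u\circ Q\in\mathcal{P}_{FSt,p}(^{m}G;Y)$ with $\|u\circ Q\|_{FSt,p}\leq\pi_{p}(u)\|Q\|$. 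No genuine obstacle arises; the argument is entirely elementary once one recognizes that testing $u\circ Q$ against scalar polynomials on $G$ corresponds, via $u$, to testing the images through functionals on $X$, the assignment $\psi\mapsto\psi\circ Q$ carrying $B_{X^{\ast}}$ into $\|Q\|\cdot B_{\mathcal{P}(^{m}G)}$.
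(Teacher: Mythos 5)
Your proof is correct and follows essentially the same route as the paper's: regroup by linearity of $u$, apply the $\pi_p$-inequality to the vectors $y_j=\sum_i\lambda_j^i Q(x_j^i)$, and recognize $\psi\circ Q$ as an $m$-homogeneous polynomial of norm at most $\|Q\|$ to pass to the supremum over $B_{\mathcal{P}(^mG)}$. The only difference is that you make the normalization step and the trivial case $Q=0$ explicit, which the paper leaves implicit.
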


\begin{proof}
Let $m_{1},m_{2}$ be positive integers, $x_{j}^{i}\in X$, and scalars
$\lambda_{j}^{i}$, $1\leq j\leq m_{1}$, $1\leq i\leq m_{2}$. Then,
\begin{align*}
(\sum_{j=1}^{m_{1}} \| (\sum_{i=1}^{m_{2}} \lambda_{j}^{i} u\circ Q(x_{j}%
^{i})\|^{p})^{1/p}  &  = (\sum_{j=1}^{m_{1}} \| u(\sum_{i=1}^{m_{2}}
\lambda_{j}^{i} Q(x_{j}^{i})\|^{p})^{1/p}\\
&  \leq\pi_{p}(u) \sup_{\|x^{*}\|\leq1, x^{*}\in X^{*}}(\sum_{j=1}^{m_{1}%
}|\langle x^{*}, \sum_{i=1}^{m_{2}}\lambda_{j}^{i} Q(x_{j}^{i})\rangle
|^{p})^{1/p}\\
&  \leq\pi_{p}(u) \|Q\| \sup_{\|x^{*}\|\leq1, x^{*}\in X^{*}}(\sum
_{j=1}^{m_{1}}| \sum_{i=1}^{m_{2}}\lambda_{j}^{i}\langle x^{*}, Q/\|Q\|(x_{j}%
^{i})\rangle|^{p})^{1/p}\\
&  \leq\pi_{p}(u) \|Q\| \sup_{\|q\|\leq1, q\in{\mathcal{P}}(^{m}G)}(\sum
_{j=1}^{m_{1}}|\sum_{i=1}^{m_{2}}\lambda_{j}^{i} q(x_{j}^{i})|^{p})^{1/p}.
\end{align*}

\end{proof}

\begin{theorem}
[Pietsch-Factorization type theorem]\label{factorizationpol} Let
$P\in{\mathcal{P}}(^{m}X;Y)$. Then $P$ is factorable strongly $p$-summing if
and only if there exists a regular Borel probability measure $\mu$ on
$B_{\mathcal{P}(^{m}X)}$, a closed subspace $G_{p}$ of $L_{p}(\mu)$ and a
continuous linear operator $v_{0}:G_{p}\rightarrow Y$ such that $j_{p}%
\circ\delta(X)\subset G_{p}$ and $v_{0}\circ j_{p}\circ\delta=P$, where
$j_{p}:C(B_{{\mathcal{P}}(^{m}X)})\rightarrow L_{p}(B_{{\mathcal{P}}(^{m}%
X)},\mu)$ is the canonical inclusion.
%the following diagram commutes:
%\begin{equation}\label{diagram}
%\xymatrix{ X \ar[r]^{P}\ar[d]^{\delta} &   Y\\ {\rm SPAN}(\delta(X))
%\ar[r]^{j_{p}}\ar[d] & G \ar[u]_{v_0}\ar[d]
%\\ C(B_{{\mathcal P}(^mX)})&L_{p}(\mu)}.
%\end{equation}

\end{theorem}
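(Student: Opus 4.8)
The plan is to mirror the structure of the multilinear Pietsch–Factorization proof given earlier for $\Pi_{FSt,p}$, replacing the canonical $n$-linear map $i_{X_1\times\cdots\times X_n}$ by the $m$-homogeneous map $\delta:X\to C(B_{\mathcal{P}(^mX)})$ and invoking the polynomial domination theorem (Theorem \ref{dominationpol}) in place of Theorem \ref{p0}. First I would assume that $P$ is factorable strongly $p$-summing and let $\mu$ be the regular Borel probability measure on $B_{\mathcal{P}(^mX)}$ furnished by Theorem \ref{dominationpol}, with associated constant $C$. Let $W_p$ denote the linear span of $j_p\circ\delta(X)$ inside $L_p(B_{\mathcal{P}(^mX)},\mu)$, where $j_p:C(B_{\mathcal{P}(^mX)})\to L_p(\mu)$ is the canonical inclusion. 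A typical element of $W_p$ has the form $z=\sum_{i=1}^k\lambda^i\,\delta_{x^i}$ (viewed in $L_p(\mu)$), i.e. the class of the function $q\mapsto\sum_{i=1}^k\lambda^i q(x^i)$.

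Next I would define $v_0:W_p\to Y$ by $v_0(z)=\sum_{i=1}^k\lambda^i P(x^i)$ and check that it is well defined. The key point is that if two such finite combinations agree as elements of $L_p(\mu)$, then their difference $w$ satisfies $\int_{B_{\mathcal{P}(^mX)}}|w(q)|^p\,d\mu=0$, and Theorem \ref{dominationpol} applied to the corresponding difference of the two index families yields $\bigl\|\sum_i\lambda^i P(x^i)-\sum_{i'}\alpha^{i'}P(y^{i'})\bigr\|\le C\bigl(\int|w|^p\,d\mu\bigr)^{1/p}=0$. Hence $v_0$ is well defined, and the very same domination inequality gives $\|v_0(z)\|\le C\|z\|_{L_p(\mu)}$, so $v_0$ is continuous with $\|v_0\|\le C$. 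From the definition of $v_0$ one reads off immediately that $v_0\circ j_p\circ\delta=P$, since $v_0(j_p(\delta(x)))=v_0(\delta_x)=P(x)$. Finally I would extend $v_0$ by continuity to the closed subspace $G_p:=\overline{W_p}\subset L_p(\mu)$, preserving the norm bound, and this $G_p$ together with the extended operator is the desired factorization.

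For the converse I would assume the factorization $P=v_0\circ j_p\circ\delta$ exists and recover the factorable strongly $p$-summing inequality directly. Since $j_p$ is the canonical inclusion $C(B_{\mathcal{P}(^mX)})\to L_p(\mu)$ and $v_0$ is linear and continuous, for any finite families $x_j^i\in X$ and scalars $\lambda_j^i$ I would estimate $\bigl\|\sum_i\lambda_j^i P(x_j^i)\bigr\|=\bigl\|v_0\bigl(\sum_i\lambda_j^i\,j_p(\delta_{x_j^i})\bigr)\bigr\|\le\|v_0\|\,\bigl\|\sum_i\lambda_j^i\,j_p(\delta_{x_j^i})\bigr\|_{L_p(\mu)}$, then raise to the $p$-th power, sum over $j$, and bound $\|g\|_{L_p(\mu)}\le\|g\|_{C(B_{\mathcal{P}(^mX)})}$ using that $\mu$ is a probability measure. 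The $C$-sup-norm of $q\mapsto\sum_i\lambda_j^i q(x_j^i)$ is exactly $\sup_{\|q\|\le1}\bigl|\sum_i\lambda_j^i q(x_j^i)\bigr|$, which reproduces the defining inequality with constant $\|v_0\|$.

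I do not expect a genuine obstacle here, as the argument is a faithful polynomial translation of the multilinear case; the only point demanding slight care is the well-definedness of $v_0$, where one must pass from equality of the $L_p$-classes (an almost-everywhere statement) to the norm identity in $Y$ via the domination theorem rather than arguing pointwise. One should also keep straight that $\delta$ is a polynomial, not a linear map, so the factorization $P=v_0\circ j_p\circ\delta$ is a composition of the nonlinear $\delta$ with the linear $v_0\circ j_p$, which is precisely what makes $P$ linearize through $L_p(\mu)$.
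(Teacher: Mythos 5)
Your proof is correct and follows essentially the same route as the paper's: the forward direction (take the span of $j_{p}\circ\delta(X)$, establish well-definedness of $v_{0}$ via the domination theorem, prove continuity, extend to the closure) is identical, and your converse is the direct computation that the paper instead outsources to Proposition~\ref{comp} (writing $P=(v_{0}\circ j_{p})\circ\delta$ with $v_{0}\circ j_{p}$ absolutely $p$-summing). One small caution in your converse: after raising to the $p$-th power and summing over $j$ you must exchange the sum with the integral and only then bound the integrand by its supremum over $B_{{\mathcal{P}}(^{m}X)}$, since bounding each $\bigl\Vert\sum_{i}\lambda_{j}^{i}\,j_{p}(\delta_{x_{j}^{i}})\bigr\Vert_{L_{p}(\mu)}$ by the corresponding sup norm separately would place the supremum over $q$ inside the sum over $j$, which is the wrong side of $\sup_{q}\sum_{j}\leq\sum_{j}\sup_{q}$ and does not reproduce the defining inequality.
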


\begin{proof}
Assume first that $P$ is factorable strongly $p$-summing. Let $\mu$ be given
by Theorem~\ref{dominationpol}. Take $G_{p}$ the completion of the image by
$j_{p}$ of the linear span of $\delta(X)$. Define $v_{0}(j_{p} (\sum_{i=1}%
^{k}\lambda_{i}\delta_{x_{i}})):=\sum_{i=1}^{k}\lambda_{i}P(x_{i})$. To see
that $v_{0}$ is well defined, consider that $j_{p} (\sum_{i=1}^{k}\lambda
_{i}\delta_{x_{i}})=j_{p} (\sum_{i=1}^{l}\eta_{i}\delta_{y_{i}})$. Then
$w:=\sum_{i=1}^{k}\lambda_{i}\delta_{x_{i}}-\sum_{i=1}^{l}\eta_{i}%
\delta_{y_{i}}=0$ a.e. on $B_{{\mathcal{P}}(^{m}X)}$. Hence,
\[
\|\sum_{i=1}^{k}\lambda_{i}P(x_{i})-\sum_{i=1}^{l} \eta_{i}P(y_{i}%
)\|\leq\|P\|_{FSt,p}(\int_{B_{{\mathcal{P}}(^{m}X)}} |\sum_{i=1}^{k}%
\lambda_{i}q(x_{i})-\sum_{i=1}^{l} \eta_{i}q(y_{i})|^{p}\, d\mu)^{1/p}=0.
\]
Thus, $v_{0}(w)=0$. That proves that $v_{0}$ is well defined. The continuity
of $v_{0}$ follows from the calculations:
\begin{align*}
\| v_{0}(z)\|  &  = \|\sum_{i=1}^{k}\lambda_{i}P(x_{i})\|\leq\|P\|_{FSt,p}%
(\int_{B_{{\mathcal{P}}(^{m}X)}} |\sum_{i=1}^{k}\lambda_{i}q(x_{i})|^{p}
d\mu)^{1/p}\\
&  = \|P\|_{FSt,p}\| \sum_{i=1}^{k}\lambda_{i}\delta_{x_{i}}\|_{L_{p}(\mu
)}=\|P\|_{FSt,p}\|z\|_{L_{p}(\mu)}\\
\end{align*}
for any $z=j_{p}(\sum_{i=1}^{k}\lambda_{i}\delta_{x_{i}})$. The desired linear
operator is just the continuous extension of $v_{0}$ to $G_{p}$. The converse
follows from Proposition~\ref{comp}.
\end{proof}

\begin{corollary}
\label{idealcomp} Let $P\in{\mathcal{P}}(^{m}X;Y)$. Then $P\in{\mathcal{P}%
}_{FSt,p}(^{m}X;Y)$ if and only if $P=u\circ Q$, for some continuous
$m$-homogeneous polynomial $Q$ and some absolutely $p$-summing linear operator
$u$. In that case $\|P\|_{FSt,p}=\inf\{\pi_{p}(u)\|Q\|:P=u\circ Q\}$.
\end{corollary}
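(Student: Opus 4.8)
The plan is to read off both directions from the polynomial Factorization Theorem (Theorem~\ref{factorizationpol}) and Proposition~\ref{comp}, and then to reconcile the constants so as to obtain the asserted formula for the norm.

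For the ``if'' direction, suppose $P=u\circ Q$ with $Q\in\mathcal{P}(^{m}X;Z)$ a continuous $m$-homogeneous polynomial into some Banach space $Z$ and $u:Z\to Y$ absolutely $p$-summing. This is exactly the situation of Proposition~\ref{comp} (applied with $G=X$ and with $Z$ as the intermediate space), which yields $P=u\circ Q\in\mathcal{P}_{FSt,p}(^{m}X;Y)$ together with the estimate $\|P\|_{FSt,p}\le\pi_{p}(u)\|Q\|$. Taking the infimum over all such factorizations gives $\|P\|_{FSt,p}\le\inf\{\pi_{p}(u)\|Q\|:P=u\circ Q\}$.

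For the ``only if'' direction, assume $P\in\mathcal{P}_{FSt,p}(^{m}X;Y)$. I would first invoke Theorem~\ref{factorizationpol} to obtain a regular Borel probability measure $\mu$ on $B_{\mathcal{P}(^{m}X)}$, a closed subspace $G_{p}\subseteq L_{p}(\mu)$ and a continuous linear operator $v_{0}:G_{p}\to Y$ with $v_{0}\circ j_{p}\circ\delta=P$; moreover the continuity estimate in the proof of that theorem gives $\|v_{0}\|\le\|P\|_{FSt,p}$ (or, for any $\varepsilon>0$, a factorization with $\|v_{0}\|<\|P\|_{FSt,p}+\varepsilon$). The task is now to regroup these maps as ``polynomial followed by absolutely $p$-summing operator''. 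The natural split is $Q=\delta$ and $u=v_{0}\circ j_{p}$, but $v_{0}$ is defined only on $G_{p}$ whereas $j_{p}$ maps all of $C(B_{\mathcal{P}(^{m}X)})$ into $L_{p}(\mu)$; this domain mismatch is the only genuine obstacle, and I expect it to be the delicate point of the argument. I would resolve it by corestricting: set $Z:=\overline{\operatorname{span}}\,\delta(X)\subseteq C(B_{\mathcal{P}(^{m}X)})$, take $Q:=\delta:X\to Z$ (a continuous $m$-homogeneous polynomial with $\|Q\|\le1$, since $\|\delta(x)\|_{\infty}=\sup_{\|q\|\le1}|q(x)|\le\|x\|^{m}$), and let $j_{p}|_{Z}:Z\to L_{p}(\mu)$ be the restriction of $j_{p}$. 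By continuity of $j_{p}$ and the definition of $G_{p}$ as the closure of the image under $j_{p}$ of the span of $\delta(X)$, the map $j_{p}|_{Z}$ takes its values in $G_{p}$, so $u:=v_{0}\circ j_{p}|_{Z}:Z\to Y$ is well defined and satisfies $u\circ Q=v_{0}\circ j_{p}\circ\delta=P$.

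It then remains to verify that $u$ is absolutely $p$-summing with the right norm bound. Here I would use the classical fact that the canonical inclusion $j_{p}:C(K)\to L_{p}(\mu)$ into the $L_{p}$-space of a probability measure is absolutely $p$-summing with $\pi_{p}(j_{p})\le1$ (evaluation at points of $K$ gives norm-one functionals, so that $\sum_{k}\|j_{p}f_{k}\|_{L_{p}}^{p}=\int\sum_{k}|f_{k}|^{p}\,d\mu$ is dominated by the weak $\ell_{p}$ norm of $(f_{k})_{k}$). Its restriction $j_{p}|_{Z}$ is thus absolutely $p$-summing with $\pi_{p}(j_{p}|_{Z})\le1$, and by the ideal property of $\Pi_{p}$ the composition $u=v_{0}\circ j_{p}|_{Z}$ is absolutely $p$-summing with $\pi_{p}(u)\le\|v_{0}\|\,\pi_{p}(j_{p}|_{Z})\le\|P\|_{FSt,p}$. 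Hence $P=u\circ Q$ with $\pi_{p}(u)\|Q\|\le\|P\|_{FSt,p}$, which gives $\inf\{\pi_{p}(u)\|Q\|:P=u\circ Q\}\le\|P\|_{FSt,p}$; combined with the reverse inequality supplied by Proposition~\ref{comp}, the two quantities coincide and the equality $\|P\|_{FSt,p}=\inf\{\pi_{p}(u)\|Q\|:P=u\circ Q\}$ follows. Everything beyond the domain bookkeeping is a direct application of the two previous results.
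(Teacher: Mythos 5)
Your proof is correct and follows exactly the route the paper intends: the paper's own proof of this corollary is the one-line citation of Theorem~\ref{factorizationpol} and Proposition~\ref{comp}, and your write-up simply supplies the details (corestricting $\delta$ to $Z=\overline{\operatorname{span}}\,\delta(X)$, using $\pi_p(j_p)\le 1$ and $\|v_0\|\le\|P\|_{FSt,p}$ to get one inequality for the norm, and Proposition~\ref{comp} for the other). Nothing further is needed.
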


\begin{proof}
It follows from Theorem \ref{factorizationpol} and Proposition \ref{comp}.
\end{proof}

Corollary \ref{idealcomp} says that the ideal of all factorable strongly
$p$-summing $m$-homogeneous polynomials is the composition ideal with all
absolutely $p$-summing linear operators, that is, ${\mathcal{P}}_{FSt,p}%
=\Pi_{p}\circ{\mathcal{P}}$ (see \cite{BoPeRuPRIMS2007}). An analogous
argument for multilinear operators instead of polynomials yields to prove that
the ideal of all factorable strongly $p$-summing $m$-linear operators is the
composition ideal with all absolutely $p$-summing linear operators, that is,
$\Pi_{FSt,p}=\Pi_{p}\circ{\mathcal{L}}$.

\begin{remark}
In \cite{MeTo} it is shown an example of a continuous $m$-homogeneous
polynomial $P:X\to Y$ and $\phi\in\Pi_{as,r}(Y;Z)$ such that $\phi\circ P:X\to
Z$ is not $r$-dominated. By Proposition~\ref{comp}, $\phi\circ P$ is
factorable strongly $r$-summing. Therefore, the class of dominated polynomials
differs from the class of factorable strongly $r$-summing polynomials.
\end{remark}

\begin{theorem}
\label{linearization} Let $P\in{\mathcal{P}}(^{m}X;Y)$. The following are equivalent:

\begin{enumerate}
\item $P\in{\mathcal{P}}_{FSt,p}(^{m}X;Y).$

\item $P_{L,s}$ is absolutely $p$-summing.

\item $\check P\in\Pi_{FSt,p}(^{m}X;Y)$.
\end{enumerate}

In that case, $\|P\|_{FSt,p}=\pi_{p}(P_{L,s})$.
\end{theorem}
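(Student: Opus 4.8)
The plan is to reduce the whole statement to the composition-ideal descriptions already at hand. By Corollary~\ref{idealcomp} we have ${\mathcal P}_{FSt,p}=\Pi_{p}\circ{\mathcal P}$ together with the norm formula $\|P\|_{FSt,p}=\inf\{\pi_{p}(u)\|Q\|:P=u\circ Q\}$, and the paragraph following it records the multilinear analogue $\Pi_{FSt,p}=\Pi_{p}\circ{\mathcal L}$. Besides these I would use only three elementary facts: (i) linearization commutes with post-composition by a bounded linear map, i.e. if $P=u\circ Q$ then $P_{L,s}=u\circ Q_{L,s}$, which follows from the uniqueness of the linearization because $u\circ Q_{L,s}$ is linear and sends $x\otimes\cdots\otimes x$ to $u(Q(x))=P(x)$; (ii) Ryan's isometry $\|Q_{L,s}\|=\|Q\|$; and (iii) $\pi_{s}(x\otimes\cdots\otimes x)=\|x\|^{m}$, so that the diagonal polynomial $\sigma\colon X\to\hat\otimes^{m,s}_{\pi_{s}}X$, $\sigma(x)=x\otimes\cdots\otimes x$, has norm $1$ and satisfies $P=P_{L,s}\circ\sigma$. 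I would establish $(1)\Leftrightarrow(2)$ together with the norm equality, and then $(1)\Leftrightarrow(3)$, whence all three are equivalent.

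For $(1)\Rightarrow(2)$ I would pick a factorization $P=u\circ Q$ with $u\in\Pi_{p}$ and $Q\in{\mathcal P}$ furnished by Corollary~\ref{idealcomp}. By (i), $P_{L,s}=u\circ Q_{L,s}$, and the ideal property of $\Pi_{p}$ gives $P_{L,s}\in\Pi_{p}$ with $\pi_{p}(P_{L,s})\le\pi_{p}(u)\|Q_{L,s}\|=\pi_{p}(u)\|Q\|$ by (ii). Taking the infimum over all such factorizations yields $\pi_{p}(P_{L,s})\le\|P\|_{FSt,p}$. For $(2)\Rightarrow(1)$ I would use $P=P_{L,s}\circ\sigma$ from (iii): this exhibits $P$ as an absolutely $p$-summing operator composed with an $m$-homogeneous polynomial, so Corollary~\ref{idealcomp} (equivalently Proposition~\ref{comp}) gives $P\in{\mathcal P}_{FSt,p}$ and $\|P\|_{FSt,p}\le\pi_{p}(P_{L,s})\|\sigma\|=\pi_{p}(P_{L,s})$. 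The two inequalities give both the equivalence and the asserted identity $\|P\|_{FSt,p}=\pi_{p}(P_{L,s})$.

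For $(1)\Rightarrow(3)$ I would factor again $P=u\circ Q$ with $u\in\Pi_{p}$, $Q\in{\mathcal P}$. The symmetric $m$-linear map $u\circ\check Q$ is symmetric and sends $(x,\dots,x)$ to $u(Q(x))=P(x)$, so by uniqueness of the associated symmetric map $\check P=u\circ\check Q$; since $\check Q$ is a continuous $m$-linear operator and $u\in\Pi_{p}$, the identity $\Pi_{FSt,p}=\Pi_{p}\circ{\mathcal L}$ yields $\check P\in\Pi_{FSt,p}$. For $(3)\Rightarrow(1)$ I would specialize the defining inequality for $\check P$ to the diagonal $x_{k,j}^{i}=x_{j}^{i}$ ($k=1,\dots,m$). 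Then $\check P(x_{j}^{i},\dots,x_{j}^{i})=P(x_{j}^{i})$ turns the left-hand side into the left-hand side of the polynomial inequality, while each test functional contributes $\varphi(x_{j}^{i},\dots,x_{j}^{i})=\hat\varphi(x_{j}^{i})$, where $\hat\varphi(x):=\varphi(x,\dots,x)$ is an $m$-homogeneous polynomial with $\|\hat\varphi\|\le\|\varphi\|\le1$. Since $\{\hat\varphi:\|\varphi\|\le1\}\subset B_{{\mathcal P}(^{m}X)}$, the supremum over the larger class of unit polynomials dominates, and one obtains exactly the inequality defining $P\in{\mathcal P}_{FSt,p}$, with $\|P\|_{FSt,p}\le\|\check P\|_{FSt,p}$.

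The computations are short; the points that require care are the two linearization/symmetrization identities $P_{L,s}=u\circ Q_{L,s}$ and $\check P=u\circ\check Q$, which must be read off from the uniqueness clauses in the definitions rather than by direct manipulation, and the norm bookkeeping in $(2)\Rightarrow(1)$, where the sharp constant $\|P\|_{FSt,p}=\pi_{p}(P_{L,s})$ hinges on $\|\sigma\|=1$, i.e. on $\pi_{s}(x\otimes\cdots\otimes x)=\|x\|^{m}$. The genuinely non-formal step is $(3)\Rightarrow(1)$: the key observation is that passing to the diagonal \emph{enlarges}, rather than shrinks, the family of admissible test polynomials (every $\hat\varphi$ with $\|\varphi\|\le1$ lies in the unit ball of ${\mathcal P}(^{m}X)$), so that the estimate points in the desired direction and no polarization constant intervenes.
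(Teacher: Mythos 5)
Your proof is correct, but it runs along a different track from the paper's. For $(1)\Rightarrow(2)$ the paper argues directly from the defining inequality: it computes $\bigl(\sum_{j}\Vert P_{L,s}(\sum_{i}\lambda_{i}^{j}x_{i}^{j}\otimes\cdots\otimes x_{i}^{j})\Vert^{p}\bigr)^{1/p}$ on the span of the diagonal and uses Ryan's isometry ${\mathcal P}(^{m}X)\cong(\hat\otimes^{m,s}_{\pi_{s}}X)^{\ast}$ to recognize the right-hand supremum as the weak $\ell_{p}$-norm of the tensors, then extends by density; you instead invoke the factorization $P=u\circ Q$ from Corollary~\ref{idealcomp}, the identity $P_{L,s}=u\circ Q_{L,s}$, and the ideal property of $\Pi_{p}$, which is cleaner on the norm bookkeeping (the paper does not actually spell out how $\pi_{p}(P_{L,s})\le\Vert P\Vert_{FSt,p}$ follows from its displayed chain, whereas your infimum argument delivers it at once). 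Both proofs use Proposition~\ref{comp} for $(2)\Rightarrow(1)$, though you make explicit the role of the diagonal polynomial $\sigma$ and of $\pi_{s}(x\otimes\cdots\otimes x)=\Vert x\Vert^{m}$ in getting the sharp constant. The largest divergence is in $(1)\Leftrightarrow(3)$: the paper outsources this to the composition-ideal machinery, citing Proposition~3.2 of \cite{BoPeRuPRIMS2007} for the equivalence $P\in\Pi_{p}\circ{\mathcal P}\Leftrightarrow\check P\in\Pi_{p}\circ{\mathcal L}$, while you prove $(1)\Rightarrow(3)$ by hand via $\check P=u\circ\check Q$ and give a genuinely elementary $(3)\Rightarrow(1)$ by restricting the multilinear inequality to the diagonal and observing that $\{\hat\varphi:\Vert\varphi\Vert\le 1\}\subset B_{{\mathcal P}(^{m}X)}$, so the supremum only grows and no polarization constant appears. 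Your version is more self-contained and makes the constants transparent; the paper's is shorter because it leans on the external composition-ideal result and on Ryan's duality. Do note that your route depends on Corollary~\ref{idealcomp}, i.e.\ on the Factorization Theorem~\ref{factorizationpol}, which is proved beforehand, so there is no circularity; and the one fact you use that the paper's definition of $\pi_{s}$ only half-provides, namely $\pi_{s}(x\otimes\cdots\otimes x)\ge\Vert x\Vert^{m}$, should be justified by testing against $\varphi^{m}$ for a norming functional $\varphi$.
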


\begin{proof}
(1)$\Rightarrow$(2) Assume first that $P$ is factorable strongly $p$-summing.
Then
\begin{align*}
(\sum_{j=1}^{m_{1}}\Vert P_{L,s}(\sum_{i=1}^{m_{2}}\lambda_{i}^{j}x_{i}%
^{j}\otimes\cdots\otimes x_{i}^{j})\Vert^{p})^{1/p}  &  =(\sum_{j=1}^{m_{1}%
}\Vert\sum_{i=1}^{m_{2}}\lambda_{i}^{j}P_{L,s}(x_{i}^{j}\otimes\cdots\otimes
x_{i}^{j})\Vert^{p})^{1/p}\\
&  =(\sum_{j=1}^{m_{1}}\Vert\sum_{i=1}^{m_{2}}\lambda_{j}^{i}P(x_{i}^{j}%
)\Vert^{p})^{1/p}\\
&  \leq\Vert P\Vert_{FSt,p}\sup_{\Vert q\Vert\leq1,q\in{\mathcal{P}}(^{m}%
X)}(\sum_{j=1}^{m_{1}}|\sum_{i=1}^{m_{2}}\lambda_{j}^{i}q(x_{i}^{j}%
)|^{p})^{1/p}\\
&  =\Vert P\Vert_{FSt,p}\sup_{\Vert q\Vert\leq1,q\in(\hat{\otimes}_{\pi_{s}%
}^{m,s}X)^{\ast}}(\sum_{j=1}^{m_{1}}|\sum_{i=1}^{m_{2}}\lambda_{j}%
^{i}q(\otimes_{m}x_{i}^{j})|^{p})^{1/p}\\
&
\end{align*}
(2)$\Rightarrow$(1) follows from Proposition \ref{comp}.\newline%
(1)$\Leftrightarrow$(3) As ${\mathcal{P}}_{FSt,p}(^{m}X;Y)=\Pi_{p}%
\circ{\mathcal{P}}(^{m}X;Y)$, it follows from \cite[Proposition 3.2]%
{BoPeRuPRIMS2007} that $P\in{\mathcal{P}}_{FSt,p}(^{m}X;Y)$ if and only if
$\check{P}\in\Pi_{p}\circ{\mathcal{L}}(^{m}X;Y)$ and, similarly to Proposition
\ref{comp} it can be proved that $\Pi_{p}\circ{\mathcal{L}}(^{m}%
X;Y)=\Pi_{FSt,p}(^{m}X;Y)$.
\end{proof}

We finish this section with a Grothendieck type theorem:

\begin{theorem}
[Grothendieck type theorem]\textrm{ }If $m\geq1$ is a positive integer, then
${\mathcal{P}}(^{m}\ell_{1};\ell_{2})={\mathcal{P}}_{FSt,1}(^{m}\ell_{1}%
;\ell_{2}).$
\end{theorem}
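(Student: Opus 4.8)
The plan is to reduce the claim to the linear Grothendieck theorem by means of the linearization characterization of Theorem \ref{linearization}. The inclusion $\mathcal{P}_{FSt,1}(^{m}\ell_{1};\ell_{2})\subseteq\mathcal{P}(^{m}\ell_{1};\ell_{2})$ is trivial, so only the reverse inclusion requires an argument. Fix $P\in\mathcal{P}(^{m}\ell_{1};\ell_{2})$. By Theorem \ref{linearization}, $P$ is factorable strongly $1$-summing if and only if its linearization $P_{L,s}:\hat{\otimes}^{m,s}_{\pi_{s}}\ell_{1}\to\ell_{2}$ is absolutely $1$-summing. Thus it suffices to prove that \emph{every} continuous linear operator from $\hat{\otimes}^{m,s}_{\pi_{s}}\ell_{1}$ into $\ell_{2}$ is absolutely $1$-summing.

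The key step is the identification of the symmetric projective tensor product of $\ell_{1}$. First I would recall that the full $m$-fold projective tensor product is $\ell_{1}\widehat{\otimes}_{\pi}\cdots\widehat{\otimes}_{\pi}\ell_{1}\cong\ell_{1}(\mathbb{N}^{m})=\ell_{1}$ isometrically. The symmetrization map $\mathcal{S}(x_{1}\otimes\cdots\otimes x_{m})=\frac{1}{m!}\sum_{\sigma\in S_{m}}x_{\sigma(1)}\otimes\cdots\otimes x_{\sigma(m)}$ extends to a norm-one linear projection of this space onto the closed subspace of symmetric tensors, so the symmetric tensors form an infinite-dimensional $1$-complemented subspace of $\ell_{1}$; by Pe\l czy\'nski's theorem every such subspace is isomorphic to $\ell_{1}$. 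Finally, on the space of symmetric tensors the $s$-projective norm $\pi_{s}$ is equivalent to the restriction of the projective norm $\pi$ (the estimate $\pi\leq\pi_{s}$ is immediate on diagonal tensors, and the reverse bound $\pi_{s}\leq C_{m}\,\pi$ follows from the polarization formula). Consequently $\hat{\otimes}^{m,s}_{\pi_{s}}\ell_{1}$ is isomorphic to $\ell_{1}$.

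Granting this identification, let $\Phi:\ell_{1}\to\hat{\otimes}^{m,s}_{\pi_{s}}\ell_{1}$ be an isomorphism. Then $P_{L,s}\circ\Phi:\ell_{1}\to\ell_{2}$ is a continuous linear operator and, by the Grothendieck theorem, it is absolutely $1$-summing. Since the absolutely summing operators form an operator ideal, $P_{L,s}=(P_{L,s}\circ\Phi)\circ\Phi^{-1}$ is absolutely $1$-summing as well. By Theorem \ref{linearization} this yields $P\in\mathcal{P}_{FSt,1}(^{m}\ell_{1};\ell_{2})$, which completes the proof.

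The main obstacle is precisely the tensor-product identification $\hat{\otimes}^{m,s}_{\pi_{s}}\ell_{1}\cong\ell_{1}$: one must check both that the symmetrization projection is bounded (indeed of norm one) on the projective tensor product and that $\pi_{s}$ is equivalent to the induced projective norm on symmetric tensors, after which Pe\l czy\'nski's characterization of complemented subspaces of $\ell_{1}$ finishes the job. Alternatively, one could bypass the abstract complementation argument by exhibiting the monomials $\{e_{i_{1}}\odot\cdots\odot e_{i_{m}}:i_{1}\leq\cdots\leq i_{m}\}$ as an $\ell_{1}$-basis of $\hat{\otimes}^{m,s}_{\pi_{s}}\ell_{1}$ directly, which amounts to the same equivalence of norms.
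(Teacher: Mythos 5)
Your argument is correct and is essentially the paper's own proof: both reduce the statement, via Theorem \ref{linearization}, to showing that every continuous linear operator from $\hat{\otimes}^{m,s}_{\pi_{s}}\ell_{1}$ into $\ell_{2}$ is absolutely $1$-summing, and then invoke the linear Grothendieck theorem after identifying $\hat{\otimes}^{m,s}_{\pi_{s}}\ell_{1}$ with $\ell_{1}$. The only difference is that you justify this identification (symmetrization projection, equivalence of $\pi_{s}$ with the restricted projective norm, Pe\l czy\'nski's complementation theorem), whereas the paper simply asserts ${\mathcal{L}}(\hat{\otimes}_{\pi_{s}}^{m,s}\ell_{1};\ell_{2})={\mathcal{L}}(\ell_{1};\ell_{2})$.
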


\begin{proof}
Let $P\in{\mathcal{P}}(^{m}X;Y)$. Then $P_{L,s}\in{\mathcal{L}}(\hat{\otimes
}_{\pi_{s}}^{m,s}\ell_{1};\ell_{2})={\mathcal{L}}(\ell_{1};\ell_{2}%
)=\Pi_{as,1}(\ell_{1};\ell_{2})$. Theorem \ref{linearization} yields the result.
\end{proof}

\section{The wealth of factorable strong $p$-summability}

\label{wealth}

In this section it is shown that factorable strong $p$-summability is an
excellent non linear frame where linear results for absolute summability are
properly generalized to multilinear operators and polynomials. This evidences
the interest of this new class as it really reflects the good behavior of
absolute summability in the non linear context. Some of these results are
established for multilinear operators and some for homogeneous polynomials.
However, as a consequence of Theorem \ref{linearization} it is clear that one
can pass easily from one to each other.

The following results are consequences of Theorem \ref{linearization} and
their linear analogs (see \cite[Theorems 3.15 and 3.17]{DJT}):

\begin{proposition}
[Composition Theorem]If $u\in\Pi_{p}(X;Y)$ and $P\in{\mathcal{P}}_{FSt,q}%
(^{m}G;X)$ then $u\circ P\in{\mathcal{P}}_{FSt,r}(^{m}G;Y)$ for $1/r:=\min
\{1,1/p+1/q\}$.
\end{proposition}

\begin{proof}
By Theorem \ref{linearization} $P_{L,s}$ is absolutely $q$-summing. Then
$u\circ P_{L,s}$ is $r$-summing for $1/r:=\min\{1,1/p+1/q\}$ (see
\cite[Theorem 2.22]{DJT}). Since $u\circ P_{L,s}=(u\circ P)_{L,s}$, a second
application of Theorem \ref{linearization} yields the result.
\end{proof}

\begin{theorem}
[Extrapolation type theorem]Let $1<r<p<\infty,$ and let $X$ be a Banach space.
If ${\mathcal{P}}_{FSt,p}(^{m}X;\ell_{p})={\mathcal{P}}_{FSt,r}(^{m}X;\ell
_{p})$ then ${\mathcal{P}}_{FSt,p}(^{m}X;Y)={\mathcal{P}}_{FSt,1}(^{m}X;Y)$
for every Banach space $Y$.
\end{theorem}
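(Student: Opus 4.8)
The plan is to transport the whole statement to the linear setting through the linearization isomorphism, where the classical (linear) Extrapolation Theorem is available. Write $E:=\hat{\otimes}_{\pi_{s}}^{m,s}X$. The universal property of the symmetric projective tensor product (Ryan \cite{Ryan}) gives that $P\mapsto P_{L,s}$ is a bijection between $\mathcal{P}(^{m}X;Z)$ and $\mathcal{L}(E;Z)$ for every Banach space $Z$; in particular, every continuous linear operator $u:E\to Z$ is the linearization of a (unique) $P\in\mathcal{P}(^{m}X;Z)$, since $u\circ\delta_{0}$, where $\delta_{0}(x)=x\otimes\cdots\otimes x$, is a continuous $m$-homogeneous polynomial with $(u\circ\delta_{0})_{L,s}=u$. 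By Theorem~\ref{linearization} (valid for every exponent $s\in[1,\infty)$), under this bijection one has $P\in\mathcal{P}_{FSt,s}(^{m}X;Z)$ if and only if $P_{L,s}\in\Pi_{s}(E;Z)$.

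First I would translate the hypothesis. Because linearization is a bijection onto $\mathcal{L}(E;\ell_{p})$ that carries $\mathcal{P}_{FSt,p}(^{m}X;\ell_{p})$ onto $\Pi_{p}(E;\ell_{p})$ and $\mathcal{P}_{FSt,r}(^{m}X;\ell_{p})$ onto $\Pi_{r}(E;\ell_{p})$, the assumed coincidence $\mathcal{P}_{FSt,p}(^{m}X;\ell_{p})=\mathcal{P}_{FSt,r}(^{m}X;\ell_{p})$ is \emph{equivalent} to the linear coincidence $\Pi_{p}(E;\ell_{p})=\Pi_{r}(E;\ell_{p})$. Here the surjectivity of the linearization map is essential: it is what guarantees that the polynomial-level equality really encodes a statement about all linear operators out of $E$.

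Next I would apply the classical (linear) Extrapolation Theorem (see \cite{DJT}) to the Banach space $E$: from $1<r<p<\infty$ and $\Pi_{p}(E;\ell_{p})=\Pi_{r}(E;\ell_{p})$ one obtains $\Pi_{p}(E;Y)=\Pi_{1}(E;Y)$ for every Banach space $Y$. Finally, transporting this equality back through the linearization bijection and Theorem~\ref{linearization} (applied with the exponents $p$ and $1$) yields $\mathcal{P}_{FSt,p}(^{m}X;Y)=\mathcal{P}_{FSt,1}(^{m}X;Y)$ for every $Y$, which is the assertion.

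The argument is essentially a dictionary translation between the factorable strongly summing polynomials on $X$ and the absolutely summing linear operators on $E$, so I do not expect a genuine obstacle. The only point that needs care is precisely the surjectivity of $P\mapsto P_{L,s}$ onto $\mathcal{L}(E;\ell_{p})$, ensuring that the hypothesis feeds correctly into the linear extrapolation result; once this is settled, the entire proof reduces to invoking the classical Extrapolation Theorem for the single Banach space $E$.
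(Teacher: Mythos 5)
Your proposal is correct and is precisely the argument the paper intends: the paper states the Extrapolation type theorem without a written proof, noting only that it is a consequence of Theorem \ref{linearization} and its linear analog in \cite{DJT}, which is exactly the linearization-transfer you carry out (including the correct observation that surjectivity of $P\mapsto P_{L,s}$ onto $\mathcal{L}(\hat{\otimes}_{\pi_{s}}^{m,s}X;\ell_{p})$ is what lets the polynomial coincidence hypothesis feed into the linear extrapolation theorem).
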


Recall that given $1\leq p\leq \infty$ and $\lambda>1$, a Banach space $X$ is said to be an {\it ${\mathcal{L}}_{p,\lambda}$-space} if every finite dimensional subspace $E$ of $X$ is contained in a finite  dimensional subspace $F$ of $X$ for which there is an isomorphism $v:F\to \ell_p^{{\rm dim} F}$ with $\|v\|\cdot \|v^{-1}\|<\lambda$.

\begin{theorem}
[Lindenstrauss--Pe\l czy\'{n}ski type theorem]Let $1\leq p\leq2$ and
$2<q<\infty$. If $X$ is a Banach space and $Y$ is a subspace of an
${\mathcal{L}}_{p,\lambda}$-space, then ${\mathcal{P}}_{FSt,q}(^{m}%
X;Y)={\mathcal{P}}_{FSt,2}(^{m}X;Y)$.
\end{theorem}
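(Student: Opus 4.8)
The plan is to reduce the polynomial statement to its linear analogue through the linearization identification of Theorem \ref{linearization}. Write $W:=\hat{\otimes}_{\pi_{s}}^{m,s}X$, so that every $P\in{\mathcal{P}}(^{m}X;Y)$ corresponds to its linearization $P_{L,s}\in{\mathcal{L}}(W;Y)$. By Theorem \ref{linearization}, for any index $s\in[1,\infty)$ one has $P\in{\mathcal{P}}_{FSt,s}(^{m}X;Y)$ if and only if $P_{L,s}\in\Pi_{s}(W;Y)$, and moreover the correspondence is isometric ($\|P\|_{FSt,s}=\pi_{s}(P_{L,s})$). Thus the two polynomial classes in the statement are, respectively, the exact preimages under $P\mapsto P_{L,s}$ of $\Pi_{q}(W;Y)$ and of $\Pi_{2}(W;Y)$, and it suffices to establish the coincidence of these two linear classes.

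First I would invoke the classical linear Lindenstrauss--Pe\l czy\'{n}ski coincidence theorem for operators whose range lies in a subspace of an $\mathcal{L}_{p,\lambda}$-space (see \cite{DJT}): if $1\leq p\leq 2$, $2<q<\infty$ and $Y$ is a subspace of an $\mathcal{L}_{p,\lambda}$-space, then $\Pi_{q}(Z;Y)=\Pi_{2}(Z;Y)$ for \emph{every} Banach space $Z$. The mechanism behind this is that the hypothesis on $Y$ forces $Y$ to have cotype $2$ (subspaces of $\mathcal{L}_{p,\lambda}$-spaces with $1\leq p\leq 2$ have cotype $2$), which is what drives the coincidence. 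The decisive point for us is that the theorem is available for an arbitrary domain, so it applies verbatim with $Z=W$, even though $W=\hat{\otimes}_{\pi_{s}}^{m,s}X$ is an abstract tensor-product space with no special structure.

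Combining the two facts gives the chain of equalities of sets
\begin{align*}
{\mathcal{P}}_{FSt,q}(^{m}X;Y)
&=\{P\in{\mathcal{P}}(^{m}X;Y):P_{L,s}\in\Pi_{q}(W;Y)\}\\
&=\{P\in{\mathcal{P}}(^{m}X;Y):P_{L,s}\in\Pi_{2}(W;Y)\}\\
&={\mathcal{P}}_{FSt,2}(^{m}X;Y),
\end{align*}
where the outer equalities are Theorem \ref{linearization} and the middle one is the linear coincidence applied to the domain $W$. Note that the inclusion ${\mathcal{P}}_{FSt,2}(^{m}X;Y)\subseteq{\mathcal{P}}_{FSt,q}(^{m}X;Y)$ is in any case automatic, since $2\leq q$ and the Inclusion Theorem for absolutely summing operators (transported through Theorem \ref{linearization}) gives $\Pi_{2}(W;Y)\subseteq\Pi_{q}(W;Y)$; so the only substantive content is the reverse inclusion, which is exactly the nontrivial direction of the linear coincidence.

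The main obstacle is therefore not located in the multilinear or polynomial machinery at all: once Theorem \ref{linearization} is in hand the argument is a transparent pull-back, and all the genuine difficulty is concentrated in the linear coincidence theorem, which we cite rather than reprove. The only care required is to verify that the hypotheses of the linear result match our setting --- namely that it is stated for an \emph{arbitrary} Banach space as domain (so that $W$ is admissible) and for ranges that are subspaces of $\mathcal{L}_{p,\lambda}$-spaces with $1\leq p\leq 2$ --- which is precisely the generality in which it is classically available.
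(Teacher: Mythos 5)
Your proposal is correct and follows exactly the paper's route: the authors state this theorem as an immediate consequence of Theorem \ref{linearization} together with the linear Lindenstrauss--Pe\l czy\'{n}ski coincidence theorem (\cite[Theorem 3.17]{DJT}) applied to the linearization $P_{L,s}$ on the symmetric tensor product. Your additional remarks on cotype $2$ and on the automatic inclusion ${\mathcal{P}}_{FSt,2}\subseteq{\mathcal{P}}_{FSt,q}$ are accurate but not needed beyond what the paper's one-line justification already contains.
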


We have already proved that Domination/Factorization Theorems are fulfilled in
the multilinear and polynomial classes of factorable strongly $p$-summing
maps. As a straightforward consequence of the Factorization Theorem
\ref{factorizationpol} we get

\begin{theorem}
Any factorable strongly $p$-summing polynomial is weakly compact.
\end{theorem}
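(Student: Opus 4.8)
The plan is to read off the factorization of $P$ provided by Theorem \ref{factorizationpol} and then run the exact polynomial analogue of the classical linear argument sketched in the introduction. Recall that an $m$-homogeneous polynomial $P:X\to Y$ is weakly compact precisely when $P(B_X)$ is relatively weakly compact in $Y$ (since $P$ is homogeneous, testing on $B_X$ suffices). So the whole task reduces to showing that $P(B_X)$ is relatively weakly compact.

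First I would apply Theorem \ref{factorizationpol} to write $P=v_0\circ j_p\circ\delta$, where $\delta:X\to C(B_{\mathcal{P}(^mX)})$ is the canonical $m$-homogeneous polynomial $\delta(x)=\delta_x$, where $j_p:C(B_{\mathcal{P}(^mX)})\to L_p(\mu)$ is the canonical inclusion, and where $v_0:G_p\to Y$ is a continuous linear operator into $Y$. Next I would note that $\delta(B_X)$ is bounded in $C(B_{\mathcal{P}(^mX)})$: for $x\in B_X$ and $q\in\mathcal{P}(^mX)$ with $\|q\|\le1$ one has $|q(x)|\le\|q\|\,\|x\|^m\le1$, so $\|\delta(x)\|=\sup_{\|q\|\le1}|q(x)|\le\|x\|^m\le1$ and $\delta(B_X)$ lies in the unit ball of $C(B_{\mathcal{P}(^mX)})$.

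The key step is that the canonical inclusion $j_p$ is absolutely $p$-summing, hence weakly compact, exactly as recorded in the linear Pietsch Factorization Theorem in the background section. Consequently $j_p$ sends the bounded set $\delta(B_X)$ to a relatively weakly compact subset of $L_p(\mu)$; this subset is contained in the closed subspace $G_p$, and since $G_p$ carries the relative weak topology it remains relatively weakly compact there. Finally, as $v_0$ is a bounded linear operator it is weak-to-weak continuous, and therefore maps relatively weakly compact sets to relatively weakly compact sets; hence $P(B_X)=v_0\bigl(j_p(\delta(B_X))\bigr)$ is relatively weakly compact in $Y$, so $P$ is weakly compact.

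There is essentially no serious obstacle: this is the verbatim polynomial transcription of the classical linear proof, and the only ingredients needed, namely the boundedness of $\delta(B_X)$ and the weak compactness of the absolutely $p$-summing inclusion $j_p$, are either elementary or already available in the excerpt. The one point worth stating explicitly is that a bounded linear operator carries relatively weakly compact sets to relatively weakly compact sets, which is immediate from its weak-to-weak continuity.
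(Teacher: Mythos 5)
Your argument is correct and is exactly the route the paper takes: the theorem is stated there as "a straightforward consequence of the Factorization Theorem," and your write-up simply fills in the details of that consequence (boundedness of $\delta(B_X)$, weak compactness of the $p$-summing inclusion $j_p$, and weak-to-weak continuity of $v_0$), mirroring the argument already sketched in the paper's introduction for the multilinear case. The paper also records an equivalent alternative via Theorem \ref{linearization} (the linearization $P_{L,s}$ is absolutely $p$-summing, hence weakly compact, which by Ryan's result is equivalent to weak compactness of $P$), but your factorization-based proof is the primary one intended.
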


An alternative way to prove it is the following: by
Theorem~\ref{linearization} the linearization of a factorable strongly
$p$-summing polynomial $P$ is absolutely $p$-summing and hence weakly compact.
By \cite{Ryan} this is equivalent to the weak compactness of $P$. The same
holds for the case of multilinear operators.

The Domination Theorem \ref{p0} also yields to the following inclusion theorem.

\begin{proposition}
[Inclusion Theorem]If $1\leq p\leq q<\infty$ then every factorable strongly
$p$-summing polynomial is factorable strongly $q$-summing.
\end{proposition}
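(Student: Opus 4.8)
The plan is to deduce this from the Pietsch Domination Theorem together with the elementary norm inclusion $L_q(\mu)\subset L_p(\mu)$ valid for probability measures. First I would invoke Theorem \ref{dominationpol}: since $P$ is factorable strongly $p$-summing, there exist a regular Borel probability measure $\mu$ on $B_{\mathcal{P}(^m X)}$ and a constant $C\geq 0$ such that
\[
\Big\| \sum_{i=1}^k \lambda^i P(x^i)\Big\| \leq C\Big(\int_{B_{\mathcal{P}(^m X)}} \Big|\sum_{i=1}^k \lambda^i q(x^i)\Big|^p \, d\mu\Big)^{1/p}
\]
for all finite families $x^1,\dots,x^k\in X$ and all scalars $\lambda^1,\dots,\lambda^k$.

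The key step is to raise the exponent on the right-hand side from $p$ to $q$. Because $\mu$ is a probability measure and $p\leq q$, Jensen's inequality (equivalently Hölder's inequality with conjugate exponents $q/p$ and its dual) yields $\|f\|_{L_p(\mu)}\leq \|f\|_{L_q(\mu)}$ for every $f\in L_q(\mu)$; applying this to the function $q\mapsto \sum_{i=1}^k \lambda^i q(x^i)$ on $B_{\mathcal{P}(^m X)}$ gives
\[
\Big(\int_{B_{\mathcal{P}(^m X)}} \Big|\sum_{i=1}^k \lambda^i q(x^i)\Big|^p \, d\mu\Big)^{1/p} \leq \Big(\int_{B_{\mathcal{P}(^m X)}} \Big|\sum_{i=1}^k \lambda^i q(x^i)\Big|^q \, d\mu\Big)^{1/q}.
\]
Chaining the two displayed inequalities produces exactly the $q$-domination estimate of Theorem \ref{dominationpol} with the same constant $C$, and the sufficiency direction of that theorem then certifies that $P\in{\mathcal{P}}_{FSt,q}(^m X;Y)$, incidentally with $\|P\|_{FSt,q}\leq\|P\|_{FSt,p}$.

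An alternative route, which I would mention for completeness, bypasses the measure entirely: by Theorem \ref{linearization} the hypothesis is equivalent to $P_{L,s}\in\Pi_p$, the linear Inclusion Theorem recorded in Section 2 gives $P_{L,s}\in\Pi_q$ since $p\leq q$, and a second application of Theorem \ref{linearization} returns $P\in{\mathcal{P}}_{FSt,q}(^m X;Y)$. There is no genuine obstacle in either argument; the only point requiring care is that the measure furnished by the domination theorem is a \emph{probability} measure, which is precisely what makes the embedding $L_q(\mu)\hookrightarrow L_p(\mu)$ norm-decreasing. Without the normalization of total mass this inequality would run in the wrong direction and the passage from $p$ to $q$ would fail.
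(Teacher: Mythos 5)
Your proof is correct and follows essentially the same route as the paper, which simply remarks that the Domination Theorem yields the inclusion result: the standard combination of the Pietsch domination with the monotonicity $\|f\|_{L_p(\mu)}\leq\|f\|_{L_q(\mu)}$ for a probability measure $\mu$. The alternative argument via Theorem \ref{linearization} and the linear Inclusion Theorem is also valid; only a minor notational clash (using $q$ both as the exponent and as the integration variable in $B_{\mathcal{P}(^{m}X)}$) is worth cleaning up.
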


The forthcoming lemmas \ref{sin}, \ref{870} and its consequences show that,
besides its good properties, the classes of factorable strongly $p$-summing
multilinear operators and polynomials have a coherent size.

\begin{lemma}
\label{sin} If every continuous $n$-linear operator $T:X_{1}\times\cdots\times
X_{n}\rightarrow Y$ is factorable strongly $p$-summing, then every continuous
linear operator $u_{j}:X_{j}\rightarrow Y$ is absolutely $p$-summing for every
$j=1,...,n$.
\end{lemma}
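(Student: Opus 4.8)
The plan is to deduce the linear statement by building, from an arbitrary linear operator, an $n$-linear operator to which the hypothesis applies. First I would fix an index $j\in\{1,\ldots,n\}$ and an arbitrary $u\in\mathcal{L}(X_j;Y)$, with the aim of showing $u\in\Pi_p(X_j;Y)$. Since the spaces are nontrivial, for each $k\neq j$ one may choose a vector $a_k\in X_k\setminus\{0\}$ and, by Hahn--Banach, a functional $\psi_k\in X_k^{\ast}$ with $\psi_k(a_k)=1$. I then set
\[
T(x_1,\ldots,x_n):=\Big(\prod_{k\neq j}\psi_k(x_k)\Big)\,u(x_j),
\]
which is plainly a continuous $n$-linear operator $X_1\times\cdots\times X_n\to Y$. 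By hypothesis $T$ is factorable strongly $p$-summing, so, specializing the defining inequality to $m_2=1$ as in the Note following the definition, $T$ is in particular strongly $p$-summing, say with constant $C$.

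Next I would evaluate the strongly $p$-summing inequality \eqref{in} on tuples in which every coordinate but the $j$-th is frozen: for an arbitrary finite family $z_1,\ldots,z_m\in X_j$ I plug in $(a_1,\ldots,a_{j-1},z_s,a_{j+1},\ldots,a_n)$ for $s=1,\ldots,m$. Because $\psi_k(a_k)=1$ for every $k\neq j$, one has $T(a_1,\ldots,z_s,\ldots,a_n)=u(z_s)$, so the left-hand side of \eqref{in} equals $\big(\sum_{s=1}^{m}\|u(z_s)\|^{p}\big)^{1/p}$.

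The crux is to bound the right-hand side of \eqref{in} by the linear weak-$p$ quantity of $(z_s)_{s}$ in $X_j$. For each continuous $n$-linear functional $\varphi$ with $\|\varphi\|\le1$, the assignment $\phi_\varphi(z):=\varphi(a_1,\ldots,z,\ldots,a_n)$ (with $z$ placed in the $j$-th slot) is a linear functional on $X_j$ with $\|\phi_\varphi\|\le\prod_{k\neq j}\|a_k\|$. Hence
\[
\sum_{s=1}^{m}\big|\varphi(a_1,\ldots,z_s,\ldots,a_n)\big|^{p}=\sum_{s=1}^{m}|\phi_\varphi(z_s)|^{p}\le\Big(\prod_{k\neq j}\|a_k\|\Big)^{p}\sup_{\psi\in B_{X_j^{\ast}}}\sum_{s=1}^{m}|\psi(z_s)|^{p},
\]
and taking the supremum over $\varphi$ bounds the right-hand side of \eqref{in}. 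Combining this with the computation of the left-hand side gives
\[
\Big(\sum_{s=1}^{m}\|u(z_s)\|^{p}\Big)^{1/p}\le C\Big(\prod_{k\neq j}\|a_k\|\Big)\sup_{\psi\in B_{X_j^{\ast}}}\Big(\sum_{s=1}^{m}|\psi(z_s)|^{p}\Big)^{1/p},
\]
which is exactly the defining inequality for $u$ to be absolutely $p$-summing, with $\pi_p(u)\le C\prod_{k\neq j}\|a_k\|$. As $j\in\{1,\ldots,n\}$ was arbitrary, this proves the lemma.

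The step I expect to require the most care is the reduction of the multilinear supremum on the right of \eqref{in} to a supremum over $B_{X_j^{\ast}}$; everything else is a direct substitution. Note that the precise value of the constant $C\prod_{k\neq j}\|a_k\|$ is immaterial here, since only the finiteness of $\pi_p(u)$ is needed, and that a nontriviality assumption on the factors $X_k$ ($k\neq j$) is genuinely necessary, as otherwise all $n$-linear operators vanish and the hypothesis carries no information about the $u_j$.
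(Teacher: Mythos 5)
Your proposal is correct and follows essentially the same route as the paper: the authors also form the tensor-type operator $T(x_1,\ldots,x_n)=u(x_1)\varphi_2(x_2)\cdots\varphi_n(x_n)$, pick $a_k$ with $\varphi_k(a_k)=1$, and specialize the strongly $p$-summing inequality to tuples frozen at the $a_k$. The only difference is that you spell out the final estimate (dominating the multilinear supremum by the weak-$\ell_p$ norm over $B_{X_j^{\ast}}$), which the paper leaves implicit with ``it follows that $u$ is absolutely $p$-summing''.
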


\begin{proof}
For the sake of simplicity, let us suppose $j=1.$ Let $u:X_{1}\rightarrow Y$
be a continuous linear operator and $\varphi_{j}\in X_{j}^{\ast}$, $j=2,...,n$
be non-null linear functionals. Then $T\left(  x_{1},...,x_{n}\right)
:=u\left(  x_{1}\right)  \varphi_{2}\left(  x_{2}\right)  ...\varphi
_{n}\left(  x_{n}\right)  $ is factorable strongly $p$-summing. Thus, in
particular, there is a $C>0$ such that
\[
\left(
%TCIMACRO{\dsum \limits_{j=1}^{m_{1}}}%
%BeginExpansion
{\displaystyle\sum\limits_{j=1}^{m_{1}}}
%EndExpansion
\left\Vert T\left(  x_{1,j},...,x_{n,j}\right)  \right\Vert ^{p}\right)
^{\frac{1}{p}}\leq C\sup_{\left\Vert \varphi\right\Vert \leq1}\left(
%TCIMACRO{\dsum \limits_{j=1}^{m_{1}}}%
%BeginExpansion
{\displaystyle\sum\limits_{j=1}^{m_{1}}}
%EndExpansion
\left\vert \varphi\left(  x_{1,j},...,x_{n,j}\right)  \right\vert ^{p}\right)
^{\frac{1}{p}}.
\]
Choose $a_{j}\in X_{j}$ such that $\varphi_{j}\left(  a_{j}\right)  =1$ for
all $j=2,...,n.$ Thus%
\[
\left(
%TCIMACRO{\dsum \limits_{j=1}^{m_{1}}}%
%BeginExpansion
{\displaystyle\sum\limits_{j=1}^{m_{1}}}
%EndExpansion
\left\Vert T\left(  x_{1,j},a_{2},...,a_{n}\right)  \right\Vert ^{p}\right)
^{\frac{1}{p}}\leq C\sup_{\left\Vert \varphi\right\Vert \leq1}\left(
%TCIMACRO{\dsum \limits_{j=1}^{m_{1}}}%
%BeginExpansion
{\displaystyle\sum\limits_{j=1}^{m_{1}}}
%EndExpansion
\left\vert \varphi\left(  x_{1,j},a_{2},...,a_{n}\right)  \right\vert
^{p}\right)  ^{\frac{1}{p}}%
\]
and it follows that $u$ is absolutely $p$-summing.
\end{proof}

The following two theorems are immediate consequences of the previous lemma
and of the respective linear results:

\begin{theorem}
[Dvoretzky-Rogers type theorem]\label{ppp}Let $Y$ be a Banach space. Every
continuous $n$-linear operator $T:Y\times\cdots\times Y\rightarrow Y$ is
factorable strongly $p$-summing if, and only if, $\dim Y<\infty.$
\end{theorem}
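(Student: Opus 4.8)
The plan is to reduce this to the already-established Dvoretzky--Rogers theorem for the linear class via Lemma~\ref{sin}. The statement is an equivalence, so I would treat the two implications separately, and I expect the forward direction to be the substantive one.

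For the nontrivial implication, suppose that every continuous $n$-linear operator $T:Y\times\cdots\times Y\rightarrow Y$ is factorable strongly $p$-summing. By Lemma~\ref{sin} (applied with $X_1=\cdots=X_n=Y$), this hypothesis immediately forces every continuous linear operator $u:Y\rightarrow Y$ to be absolutely $p$-summing, i.e.\ $\Pi_p(Y;Y)=\mathcal{L}(Y;Y)$. The classical Dvoretzky--Rogers theorem, recorded in the list of cornerstones in Section~2, then yields $\dim Y<\infty$. This is the cleanest route, and it is exactly the pattern announced in the sentence preceding the theorem (``immediate consequences of the previous lemma and of the respective linear results'').

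For the converse, assume $\dim Y<\infty$. Here I would argue that on a finite-dimensional space every continuous $n$-linear operator is automatically factorable strongly $p$-summing. The quickest justification uses the composition-ideal description $\Pi_{FSt,p}=\Pi_p\circ\mathcal{L}$ established after Corollary~\ref{idealcomp}: writing $T=\mathrm{id}_Y\circ T$ and noting that $\mathrm{id}_Y\in\Pi_p(Y;Y)=\mathcal{L}(Y;Y)$ by the finite-dimensional direction of the linear Dvoretzky--Rogers theorem, $T$ factors through an absolutely $p$-summing operator and hence lies in $\Pi_{FSt,p}$. Alternatively one can check the defining inequality directly, since on a finite-dimensional space the right-hand side supremum and the left-hand side norm are comparable by equivalence of norms.

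The main obstacle, such as it is, lies in making sure Lemma~\ref{sin} genuinely applies in the forward direction: Lemma~\ref{sin} is stated for $u_j:X_j\to Y$ with a fixed common target $Y$, and here the target of $T$ and the common domain factor are both $Y$, so the specialization is legitimate. Beyond that bookkeeping, both directions are formal consequences of results already in hand, so no delicate estimate is required; the theorem is genuinely a corollary of Lemma~\ref{sin} together with the linear Dvoretzky--Rogers theorem.
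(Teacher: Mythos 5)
Your argument is correct and is exactly the route the paper intends: the forward implication is Lemma~\ref{sin} specialized to $X_1=\cdots=X_n=Y$ followed by the linear Dvoretzky--Rogers theorem, and the converse is the standard finite-dimensional observation (the paper simply declares the theorem an ``immediate consequence'' of the lemma and the linear result, leaving both directions to the reader as you have filled them in). Your composition-ideal justification of the converse, writing $T=\mathrm{id}_Y\circ T$ with $\mathrm{id}_Y\in\Pi_p(Y;Y)$, is a clean and valid way to make that step explicit.
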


\begin{theorem}
[Lindenstrauss--Pe\l czy\'{n}ski type theorem]\label{uuu}Let $m$ be a positive
integer. If $X$ and $Y$ are infinite-dimensional Banach spaces, $X$ has an
unconditional Schauder basis and $\Pi_{FSt,1}(^{m}X;Y)=\mathcal{L}(^{m}X;Y)$
then $X=\ell_{1}$ and $Y$ is a Hilbert space.
\end{theorem}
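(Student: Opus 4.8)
The plan is to reduce the multilinear coincidence hypothesis to the corresponding linear coincidence and then invoke the classical Lindenstrauss--Pe\l czy\'nski theorem recorded in Section~2. First I would read the hypothesis $\Pi_{FSt,1}(^{m}X;Y)=\mathcal{L}(^{m}X;Y)$ simply as the assertion that every continuous $m$-linear operator $T:X\times\cdots\times X\rightarrow Y$ (all $m$ factors equal to $X$) is factorable strongly $1$-summing. This is exactly the situation covered by Lemma~\ref{sin} with $X_{1}=\cdots=X_{m}=X$, $n=m$ and $p=1$. Applying that lemma yields that every continuous linear operator $u:X\rightarrow Y$ is absolutely $1$-summing, that is, $\Pi_{1}(X;Y)=\mathcal{L}(X;Y)$.

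Next, since $X$ and $Y$ are infinite-dimensional and $X$ has an unconditional Schauder basis, the linear Lindenstrauss--Pe\l czy\'nski theorem stated in Section~2 applies to the coincidence $\Pi_{1}(X;Y)=\mathcal{L}(X;Y)$ and gives directly that $X=\ell_{1}$ and that $Y$ is a Hilbert space, which is precisely the desired conclusion.

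The argument is essentially a one-line reduction, so there is no genuine obstacle. The only point deserving a word of care is to confirm that Lemma~\ref{sin} delivers the \emph{full} coincidence $\Pi_{1}(X;Y)=\mathcal{L}(X;Y)$ rather than absolute summability of merely some distinguished operators. This is clear from the construction inside Lemma~\ref{sin}: given an arbitrary continuous linear operator $u:X\to Y$, the lemma is applied to the $m$-linear map $T(x_{1},\dots,x_{m})=u(x_{1})\varphi_{2}(x_{2})\cdots\varphi_{m}(x_{m})$ built from fixed nonzero functionals $\varphi_{2},\dots,\varphi_{m}$, and this construction is available for \emph{every} such $u$. Hence the hypothesis forces every continuous linear operator $X\to Y$ to be absolutely $1$-summing, after which the quoted linear theorem immediately closes the proof.
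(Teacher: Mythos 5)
Your proposal is correct and is exactly the argument the paper intends: the paper states that this theorem is an immediate consequence of Lemma~\ref{sin} (applied with $X_1=\cdots=X_m=X$ and $p=1$) together with the classical linear Lindenstrauss--Pe\l czy\'nski theorem recorded in Section~2. Your added remark that the construction in Lemma~\ref{sin} works for an arbitrary $u$ and hence yields the full coincidence $\Pi_{1}(X;Y)=\mathcal{L}(X;Y)$ is the right point to check and is handled correctly.
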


For polynomials we have a natural version of Lemma \ref{sin}:

\begin{lemma}
\label{870}If every continuous $n$-homogeneous polynomial $P:X\rightarrow Y$
is factorable strongly $p$-summing, then every continuous linear operator
$u:X\rightarrow Y$ is absolutely $p$-summing.
\end{lemma}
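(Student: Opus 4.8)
The plan is to mimic the strategy of Lemma~\ref{sin}, replacing the device of \emph{fixing variables} in a multilinear map by passage to the linearization, for which Theorem~\ref{linearization} is tailor-made. Fix a continuous linear operator $u:X\to Y$; the goal is to establish the absolutely $p$-summing inequality for $u$. First I would choose $\varphi\in X^{\ast}\setminus\{0\}$ and $a\in X$ with $\varphi(a)=1$, and consider the continuous $n$-homogeneous polynomial $P:X\to Y$ given by $P(x)=\varphi(x)^{n-1}u(x)$. By hypothesis $P$ is factorable strongly $p$-summing, so Theorem~\ref{linearization} guarantees that its linearization $P_{L,s}:\hat{\otimes}_{\pi_{s}}^{n,s}X\to Y$ is absolutely $p$-summing.

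The heart of the argument is to recover $u$ from $P_{L,s}$. Writing $\check P$ for the symmetric $n$-linear map associated with $P$, a direct symmetrization gives
\[
\check P(x,a,\dots,a)=\tfrac{1}{n}\bigl(u(x)+(n-1)\varphi(x)\,u(a)\bigr)
\]
for every $x\in X$. Let $R:X\to\hat{\otimes}_{\pi_{s}}^{n,s}X$ be the bounded linear map sending $x$ to the symmetric tensor determined by $(x,a,\dots,a)$, so that $P_{L,s}\circ R(x)=\check P(x,a,\dots,a)$. Since $P_{L,s}\in\Pi_{p}$ and $R$ is linear and continuous, the ideal property of $\Pi_{p}$ yields $P_{L,s}\circ R\in\Pi_{p}(X;Y)$.

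Finally I would solve the displayed identity for $u$, namely
\[
u(x)=n\,(P_{L,s}\circ R)(x)-(n-1)\varphi(x)\,u(a).
\]
The first summand is absolutely $p$-summing by the previous step, and the second is the rank-one operator $x\mapsto\varphi(x)u(a)$, which is trivially absolutely $p$-summing; as $\Pi_{p}(X;Y)$ is a linear subspace, $u$ is absolutely $p$-summing. The only genuinely delicate point is the appearance of the rank-one correction term $(n-1)\varphi(\cdot)u(a)$, an unavoidable by-product of symmetrization; the observation that makes the proof go through is simply that this term is finite rank and hence harmless. One could instead argue entirely at the level of the definition, in closer parallel with Lemma~\ref{sin}: applying the defining inequality of $P$ to the polarization coefficients that express $\check P(z_{j},a,\dots,a)$ as $\sum_{i}\lambda^{i}P(w^{i}_{j})$, and noting that the right-hand side then reduces to $\sup_{\|q\|\le1}\bigl(\sum_{j}|\check q(z_{j},a,\dots,a)|^{p}\bigr)^{1/p}$, a supremum of \emph{linear} functionals of the $z_{j}$; this route avoids Theorem~\ref{linearization} at the cost of carrying the polarization constant $c(n,X)$.
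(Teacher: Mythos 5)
Your proof is correct and follows essentially the same route as the paper: the same auxiliary polynomial $P(x)=\varphi(x)^{n-1}u(x)$, the same reduction to the operator $x\mapsto\check P(x,a,\dots,a)=\tfrac{1}{n}\bigl(u(x)+(n-1)\varphi(x)u(a)\bigr)$, and the same removal of the rank-one term $(n-1)\varphi(\cdot)u(a)$. The only cosmetic difference is that you reach that operator via $P_{L,s}\in\Pi_{p}$ composed with the bounded linear map $R:X\to\hat{\otimes}_{\pi_{s}}^{n,s}X$, whereas the paper uses the equivalence between $P\in{\mathcal{P}}_{FSt,p}$ and $\check P\in\Pi_{FSt,p}$ from Theorem~\ref{linearization} and then fixes variables as in the proof of Lemma~\ref{sin}.
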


\begin{proof}
Let $u:X\rightarrow Y$ be a continuous linear operator and $\varphi\in
X^{\ast}$ be a non-null linear functional and $a\in X$ be so that
$\varphi\left(  a\right)  =1$. Then $P\left(  x\right)  :=u\left(  x\right)
\varphi\left(  x\right)  ^{n-1}$ is factorable strongly $p$-summing. Thus
$\check{P}$ is factorable strongly $p$-summing. From the proof of Lemma
\ref{sin} we conclude that the linear operator $v:X\rightarrow Y$ defined by
$v(x)=\check{P}\left(  a,....,a,x\right)  $ is absolutely $p$-summing. But $v$
is a linear combination of $u\left(  a\right)  \varphi$ and $u;$ since
$u\left(  a\right)  \varphi$ is absolutely $p$-summing it follows that $u$ is
absolutely $p$-summing.
\end{proof}

An immediate consequence of the previous lemma is that the analogs of theorems
\ref{ppp} and \ref{uuu} work for polynomials. For instance:

\begin{theorem}
[Dvoretzky-Rogers type theorem for polynomials]Let $Y$ be a Banach space.
Every continuous $n$-homogeneous polynomial $P:Y\rightarrow Y$ is factorable
strongly $p$-summing if, and only if, $\dim Y<\infty.$
\end{theorem}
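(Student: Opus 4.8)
The plan is to reduce both implications to the linear Dvoretzky--Rogers theorem quoted in Section~2, using Lemma~\ref{870} for the hard direction, exactly mirroring the multilinear Theorem~\ref{ppp}.

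For the forward implication I would assume that every continuous $n$-homogeneous polynomial $P:Y\rightarrow Y$ is factorable strongly $p$-summing. Applying Lemma~\ref{870} with $X=Y$ immediately gives that every continuous linear operator $u:Y\rightarrow Y$ is absolutely $p$-summing, that is, $\Pi_{p}(Y;Y)=\mathcal{L}(Y;Y)$. The classical Dvoretzky--Rogers theorem then forces $\dim Y<\infty$. This direction is essentially a one-line consequence of the lemma.

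For the converse I would assume $\dim Y<\infty$ and show that every $P\in\mathcal{P}(^{m}Y;Y)$ is factorable strongly $p$-summing. The cleanest route is through the composition-ideal description in Corollary~\ref{idealcomp}: write $P=\mathrm{id}_{Y}\circ P$ with $P\in\mathcal{P}(^{m}Y;Y)$ and $\mathrm{id}_{Y}$ the identity. Since $Y$ is finite-dimensional, the easy half of the linear Dvoretzky--Rogers theorem gives $\mathrm{id}_{Y}\in\Pi_{p}(Y;Y)$, so Corollary~\ref{idealcomp} yields $P\in\mathcal{P}_{FSt,p}(^{m}Y;Y)$. Alternatively one can argue through Theorem~\ref{linearization}: the symmetric tensor product $\hat{\otimes}_{\pi_{s}}^{m,s}Y$ of a finite-dimensional space is again finite-dimensional, so the linearization $P_{L,s}$ has finite-dimensional domain and is therefore absolutely $p$-summing, whence $P$ is factorable strongly $p$-summing.

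There is no genuine obstacle here; the statement is precisely the polynomial shadow of the linear Dvoretzky--Rogers theorem, and all the nontrivial work is already carried by Lemma~\ref{870} and Corollary~\ref{idealcomp}. The only point requiring a moment's care is the converse, where finite-dimensionality of $Y$ must be invoked correctly --- either by factoring through the absolutely $p$-summing identity on $Y$ or by observing that $P_{L,s}$ has finite-dimensional domain. Both reductions are immediate once Corollary~\ref{idealcomp} (equivalently Theorem~\ref{linearization}) is in hand.
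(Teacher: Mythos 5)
Your proof is correct and follows the paper's own route: the paper obtains this theorem as an immediate consequence of Lemma~\ref{870} combined with the linear Dvoretzky--Rogers theorem, which is exactly your forward implication, and the converse is the same easy observation that for finite-dimensional $Y$ the identity (equivalently the linearization $P_{L,s}$) is absolutely $p$-summing, so Corollary~\ref{idealcomp} (or Theorem~\ref{linearization}) applies. No issues.
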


Given $P\in{\mathcal{P}}(^{m}X;Y)$ let us consider its transpose
$P^{t}:Y^{\ast}\rightarrow{\mathcal{P}}(^{m}X)$ given by $P^{t}(y^{\ast
}):=y^{\ast}\circ P$. Note that $P^{t}$ is a continuous linear operator. Let
$P^{tt}:{\mathcal{P}}(^{m}X)^{\ast}\rightarrow Y^{\ast\ast}$ be the transpose
of $P^{t}$. It is well known (see \cite[Theorem 2.21]{DJT})
that, if $Y=H$ is a Hilbert space then a continuous linear operator is
absolutely $1$-summing whenever its transpose is absolutely $p$-summing for
some $1\leq p<\infty$. Let us see that the analogous result is true for polynomials.

\begin{proposition}
Let $H$ be a Hilbert space and $P\in{\mathcal{P}}(^{m}X;H)$. If $P^{t}\in
\Pi_{p}(\hat{\otimes}_{\pi_{s}}^{m,s}X;H)$ for some $1\leq p<\infty$ then
$P\in{\mathcal{P}}_{FSt,1}(^{m}X;H)$.
\end{proposition}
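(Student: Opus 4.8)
The plan is to reduce everything to the linearization $P_{L,s}\colon\hat{\otimes}_{\pi_{s}}^{m,s}X\to H$ and then quote the cited linear result. By Theorem~\ref{linearization}, the conclusion $P\in{\mathcal{P}}_{FSt,1}(^{m}X;H)$ is equivalent to $P_{L,s}$ being absolutely $1$-summing, so it suffices to establish that last fact from the hypothesis that $P^{t}$ is absolutely $p$-summing for some $1\leq p<\infty$.

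The first step is to identify the transpose $P^{t}$ with the Banach space adjoint $(P_{L,s})^{*}$ of the linearization. Using Ryan's isometric isomorphism ${\mathcal{P}}(^{m}X)\cong(\hat{\otimes}_{\pi_{s}}^{m,s}X)^{*}$ from \cite{Ryan}, for $y^{*}\in H^{*}$ and $x\in X$ one has $P^{t}(y^{*})(x)=y^{*}(P(x))=y^{*}(P_{L,s}(x\otimes\cdots\otimes x))$. Thus, under this identification, $P^{t}(y^{*})$ is exactly the functional $y^{*}\circ P_{L,s}=(P_{L,s})^{*}(y^{*})$, since both functionals agree on the generating symmetric tensors $x\otimes\cdots\otimes x$ and hence, by linearity and density, coincide as elements of $(\hat{\otimes}_{\pi_{s}}^{m,s}X)^{*}$. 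Therefore $P^{t}=(P_{L,s})^{*}$ (with the Hilbert identification $H^{*}\cong H$).

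The second step is the application of the linear theorem. The hypothesis states that $(P_{L,s})^{*}=P^{t}$ is absolutely $p$-summing for some $1\leq p<\infty$. Since the codomain $H$ of $P_{L,s}$ is a Hilbert space, \cite[Theorem 2.21]{DJT} applies: a continuous linear operator into a Hilbert space whose transpose is absolutely $p$-summing (for some $p$) is itself absolutely $1$-summing. Consequently $P_{L,s}\in\Pi_{1}(\hat{\otimes}_{\pi_{s}}^{m,s}X;H)$. A second application of Theorem~\ref{linearization}, namely the equivalence (1)$\Leftrightarrow$(2) with $p=1$, then yields $P\in{\mathcal{P}}_{FSt,1}(^{m}X;H)$, as desired.

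The only non-routine point, and hence the main obstacle, is the identification $P^{t}=(P_{L,s})^{*}$: it rests on Ryan's duality ${\mathcal{P}}(^{m}X)\cong(\hat{\otimes}_{\pi_{s}}^{m,s}X)^{*}$ together with the observation that $P^{t}(y^{*})$ and $y^{*}\circ P_{L,s}$ take the same values on the symmetric tensors $x\otimes\cdots\otimes x$. Once this matching is secured, the argument is a direct chaining of the cited linear theorem with Theorem~\ref{linearization}.
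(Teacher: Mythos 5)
Your proposal is correct and follows essentially the same route as the paper: both identify $P^{t}$ with the adjoint of the linearization via Ryan's isomorphism $\delta$ (the paper writes this as $P_{L,s}^{t}=\delta\circ P^{t}$), then apply \cite[Theorem 2.21]{DJT} to get $P_{L,s}\in\Pi_{1}$, and conclude with Theorem~\ref{linearization}. No substantive difference.
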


\begin{proof}
From the equality  $P_{L,s}^{t}=\delta\circ P^{t}$, where
$\delta:{\mathcal{P}}(^{m}X)\rightarrow\left(  \hat{\otimes}_{\pi_{s}}^{m,s}X\right)  ^{\ast}$ is the canonical isomorphism,
it follows that $P_{L,s}^{t}$ is absolutely $p$-summing and then $P_{L,s}$ is
absolutely $1$-summing. By Theorem \ref{linearization} we conclude that $P$ is
factorable strongly $1$-summing.
\end{proof}

\begin{proposition}
Let $P\in{\mathcal{P}}(^{m}X;Y)$. Then $P\in{\mathcal{P}}_{FSt,p}(^{m}X;Y)$ if
and only if $P^{tt}\in\Pi_{p}({\mathcal{P}}(^{m}X)^{\ast};Y^{\ast\ast})$.
\end{proposition}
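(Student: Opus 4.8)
The plan is to push everything through the linearization $P_{L,s}$ by means of Theorem \ref{linearization}, and then to reduce to the classical linear fact that an absolutely $p$-summing operator and its second transpose are simultaneously $p$-summing.

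First I would record the operator identity connecting $P^{tt}$ with the second transpose of $P_{L,s}$. The proof of the preceding proposition already establishes $P_{L,s}^{t}=\delta\circ P^{t}$, where $\delta:{\mathcal{P}}(^{m}X)\to(\hat{\otimes}_{\pi_{s}}^{m,s}X)^{\ast}$ is Ryan's isometric isomorphism. Transposing once more yields $P_{L,s}^{tt}=P^{tt}\circ\delta^{t}$, or equivalently $P^{tt}=P_{L,s}^{tt}\circ(\delta^{t})^{-1}$, where $\delta^{t}:(\hat{\otimes}_{\pi_{s}}^{m,s}X)^{\ast\ast}\to{\mathcal{P}}(^{m}X)^{\ast}$ is again an isometric isomorphism. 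The only point to check here is that domains and codomains match: both $P_{L,s}^{tt}$ and $P^{tt}\circ\delta^{t}$ carry $(\hat{\otimes}_{\pi_{s}}^{m,s}X)^{\ast\ast}$ into $Y^{\ast\ast}$.

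Next, since $\Pi_{p}$ is an operator ideal and $\delta^{t}$ is an isomorphism, these two identities show that $P^{tt}\in\Pi_{p}({\mathcal{P}}(^{m}X)^{\ast};Y^{\ast\ast})$ holds if and only if $P_{L,s}^{tt}\in\Pi_{p}((\hat{\otimes}_{\pi_{s}}^{m,s}X)^{\ast\ast};Y^{\ast\ast})$: one direction follows from $P^{tt}=P_{L,s}^{tt}\circ(\delta^{t})^{-1}$ by right composition with the bounded operator $(\delta^{t})^{-1}$, and the reverse from $P_{L,s}^{tt}=P^{tt}\circ\delta^{t}$.

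The heart of the argument is then the purely linear equivalence $u\in\Pi_{p}(E;F)\Leftrightarrow u^{tt}\in\Pi_{p}(E^{\ast\ast};F^{\ast\ast})$, which I would invoke for $u=P_{L,s}$; I expect this to be the only nontrivial ingredient. This is classical (see \cite{DJT}): the easy implication uses the identity $u^{tt}\circ J_{E}=J_{F}\circ u$ with $J_{E},J_{F}$ the canonical isometric embeddings, so that $u^{tt}\in\Pi_{p}$ forces $J_{F}\circ u\in\Pi_{p}$ by the ideal property and hence $u\in\Pi_{p}$ because $J_{F}$ is isometric; the converse, that $p$-summing passes to the bidual, rests on Goldstine's theorem together with the weak-star lower semicontinuity of the norm (equivalently, on the maximality of the ideal $\Pi_{p}$). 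Combining this equivalence with Theorem \ref{linearization}, which identifies $P\in{\mathcal{P}}_{FSt,p}(^{m}X;Y)$ with $P_{L,s}\in\Pi_{p}(\hat{\otimes}_{\pi_{s}}^{m,s}X;Y)$, closes the chain $P\in{\mathcal{P}}_{FSt,p}\Leftrightarrow P_{L,s}\in\Pi_{p}\Leftrightarrow P_{L,s}^{tt}\in\Pi_{p}\Leftrightarrow P^{tt}\in\Pi_{p}$ and finishes the proof.
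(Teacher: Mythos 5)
Your proof is correct and follows essentially the same route as the paper: the authors likewise combine Theorem \ref{linearization}, the identity $P_{L,s}^{tt}=P^{tt}\circ\delta^{t}$, and the classical linear fact that $u\in\Pi_{p}$ if and only if $u^{tt}\in\Pi_{p}$ (they cite \cite[Proposition 2.19]{DJT}). You have merely written out in detail the transfer between $P^{tt}$ and $P_{L,s}^{tt}$ via the isomorphism $\delta^{t}$ and the ideal property, which the paper leaves implicit.
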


\begin{proof}
It is a consequence of Theorem \ref{linearization}, the fact that
 $P_{L,s}
^{tt}=P^{tt}\circ\delta^{t}$ and the analogous well known property for linear
operators (see \cite[Proposition 2.19]{DJT}).
\end{proof}

\section{Coherence and compatibility}

\label{coherence}

Let us denote the ideal of factorable strongly $p$-summing $n$-homogeneous
polynomials by $\mathcal{P}_{FSt,p}^{n}$, whereas $\Pi_{FSt,p}^{n} $ denotes
the ideal of factorable strongly $p$-summing $n$-linear operators. The notions
of coherent and compatible ideals of polynomials were introduced by Carando,
Dimant and Muro \cite{CDM09} in order to evaluate what polynomial approaches
preserve the spirit of a given operator ideal. Standard calculations show that
$\left(  \mathcal{P}_{FSt,p}^{n}\right)  _{n=1}^{\infty}$ is coherent and
compatible with $\Pi_{p}.$ Very recently, in \cite{rib}, the notions of
coherence and compatibility were extended to pairs of ideals of polynomials
and multi-ideals. It is also possible to show that $\left(  \mathcal{P}%
_{FSt,p}^{n},\Pi_{FSt,p}^{n} \right)  _{n=1}^{\infty}$ is coherent and
compatible with $\Pi_{p}.$

We have shown that $\mathcal{P}_{FSt,p}^{n}$ coincides with the composition
ideal with the absolutely $p$-summing operators. However, we cannot apply
\cite[Theorem~5.7]{rib} to get the coherence and compatibility as the topology
involved in that result comes from the multilinear operators space norm and it
does not coincide with $\|\cdot\|_{FSt,p}$ (see \cite{BoPeRuPRIMS2007}).
Despite of this, standard calculations also allow to get the following:

\begin{theorem}
The sequence $\left(  \mathcal{P}_{FSt,p}^{n},\Pi_{FSt,p}^{n} \right)
_{n=1}^{\infty}$ is coherent and compatible with $\Pi_{p}.$
\end{theorem}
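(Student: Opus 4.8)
The plan is to verify the defining axioms of coherence and compatibility directly for the pair $\left(\mathcal{P}_{FSt,p}^{n},\Pi_{FSt,p}^{n}\right)_{n=1}^{\infty}$, since, as observed just before the statement, \cite[Theorem~5.7]{rib} is unavailable here: its hypotheses are phrased with respect to the multilinear sup-norm, whereas our classes carry the norm $\|\cdot\|_{FSt,p}$. The engine of the argument is Theorem~\ref{linearization}, which supplies both the isometric identity $\|P\|_{FSt,p}=\pi_{p}(P_{L,s})$ and the equivalence $P\in\mathcal{P}_{FSt,p}^{n}\Leftrightarrow\check{P}\in\Pi_{FSt,p}^{n}$, together with the composition-ideal descriptions $\mathcal{P}_{FSt,p}^{n}=\Pi_{p}\circ\mathcal{P}$ and $\Pi_{FSt,p}^{n}=\Pi_{p}\circ\mathcal{L}$ recorded in Corollary~\ref{idealcomp}. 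The strategy is to rewrite each coherence/compatibility condition as an assertion about linearizations and then read it off from the fact that $\Pi_{p}$ is a normed operator ideal; the isometry converts every $\|\cdot\|_{FSt,p}$-estimate into the corresponding $\pi_{p}$-estimate, where the inequalities are classical.

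First I would dispose of the anchoring condition: the remark following the definition of factorable strong summability already gives $\mathcal{P}_{FSt,p}^{1}=\Pi_{FSt,p}^{1}=\Pi_{p}$, so the sequence is correctly based at $\Pi_{p}$. Next come the degree-reduction axioms. Given $T\in\Pi_{FSt,p}^{n}$, I factor $T=u\circ S$ with $u\in\Pi_{p}$ and $S\in\mathcal{L}$ (Corollary~\ref{idealcomp}); fixing variables $a_{j+1},\dots,a_{n}$ yields $T(\cdot,\dots,\cdot,a_{j+1},\dots,a_{n})=u\circ S(\cdot,\dots,\cdot,a_{j+1},\dots,a_{n})\in\Pi_{p}\circ\mathcal{L}=\Pi_{FSt,p}^{n-j}$, with $\pi_{p}$-controlled norm precisely as in the computation of Lemma~\ref{sin}; taking the infimum over factorizations produces the required uniform constant. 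The polynomial version is transported through the isometric equivalence $P\in\mathcal{P}_{FSt,p}^{n}\Leftrightarrow\check{P}\in\Pi_{FSt,p}^{n}$ by reading $\check{P}(a,\cdot,\dots,\cdot)$ as the symmetric multilinear map of the reduced polynomial, exactly in the spirit of Lemma~\ref{870}.

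For the raising axioms, given $P\in\mathcal{P}_{FSt,p}^{k}$ and $\gamma\in X^{\ast}$, I factor $P=u\circ Q$ and note that the $(k+1)$-homogeneous polynomial $x\mapsto\gamma(x)P(x)=u\big(\gamma(x)Q(x)\big)$ equals $u\circ(\gamma Q)\in\Pi_{p}\circ\mathcal{P}=\mathcal{P}_{FSt,p}^{k+1}$, so that $\|\gamma P\|_{FSt,p}\le\pi_{p}(u)\,\|\gamma\|\,\|Q\|$; the infimum over factorizations again yields a uniform constant. The multilinear raising step and the compatibility conditions that promote a linear $u\in\Pi_{p}$ to a degree-$k$ map are entirely analogous, with Proposition~\ref{comp} furnishing the relevant composition estimates. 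Finally, the genuinely paired conditions linking $\mathcal{P}_{FSt,p}^{n}$ with $\Pi_{FSt,p}^{n}$ are exactly the content of the isometric equivalence $(1)\Leftrightarrow(3)$ of Theorem~\ref{linearization}, together with the stability of $\Pi_{p}\circ\mathcal{L}$ under symmetrization.

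The main obstacle is precisely the one flagged before the statement: all norm estimates must be carried out in the norm $\|\cdot\|_{FSt,p}$, and not in a multilinear sup-norm, so that the coherence and compatibility constants are genuinely uniform over the Banach spaces. This is resolved by never leaving the linear picture: every bound is obtained from the identity $\|\cdot\|_{FSt,p}=\pi_{p}(\cdot_{L,s})$ and the ideal inequalities for $\Pi_{p}$, which makes the constants independent of the spaces (and, in fact, of $n$).
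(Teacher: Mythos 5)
The paper offers no proof of this theorem at all beyond the phrase ``standard calculations also allow to get the following,'' so there is no detailed argument to compare against; your proposal is a reasonable and essentially correct way of carrying out those calculations, and it uses precisely the ingredients the paper has assembled for this purpose: the composition-ideal descriptions $\mathcal{P}_{FSt,p}=\Pi_p\circ\mathcal{P}$ and $\Pi_{FSt,p}=\Pi_p\circ\mathcal{L}$ of Corollary~\ref{idealcomp}, Proposition~\ref{comp}, Theorem~\ref{linearization}, and the variable-fixing computations of Lemmas~\ref{sin} and~\ref{870}. Two points deserve more care than you give them. First, the equivalence $(1)\Leftrightarrow(3)$ of Theorem~\ref{linearization} is not literally isometric: the theorem only asserts $\|P\|_{FSt,p}=\pi_p(P_{L,s})$, and passing between $P$ and $\check P$ (as you must in the polynomial degree-reduction step, where $P_a=\check P(a,\cdot,\dots,\cdot)$ restricted to the diagonal) costs a polarization constant $c(n,X)\leq n^n/n!$; consequently the coherence constants do depend on $n$, contrary to your closing claim that they are ``independent of $n$.'' This is compatible with the definitions in \cite{CDM09} and \cite{rib}, which are formulated degreewise, but it should be acknowledged rather than asserted away. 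Second, your degree-reduction argument for multilinear maps takes an infimum over factorizations $T=u\circ S$ and identifies it with $\|T\|_{FSt,p}$; the paper only \emph{asserts} the multilinear analogue of the norm identity in Corollary~\ref{idealcomp} (``an analogous argument\dots yields\dots''), so you are leaning on an unproved, though true and routine, statement. With these caveats made explicit, the outline is sound and is surely what the authors intend by ``standard calculations.''
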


%\begin{thebibliography}{99}                                                                                               %

%\end{thebibliography}

\bigskip

\vspace*{1em}

\noindent[Daniel Pellegrino] Departamento de Matem\'atica, Universidade
Federal da Pa-ra\'iba, 58.051-900 - Jo\~ao Pessoa, Brazil, e-mail: dmpellegrino@gmail.com.

\medskip\noindent[Pilar Rueda] Departamento de An\'alisis Matem\'atico,
Universidad de Valencia, 46100 Burjassot - Valencia, Spain, e-mail: pilar.rueda@uv.es

\medskip

\noindent[Enrique A. S\'anchez P\'erez] Instituto Universitario de
Matem\'{a}tica Pura y Aplicada, Universidad Poli\'ecnica de Valencia, Camino
de Vera s/n, 46022 Valencia, Spain, e-mail: easancpe@mat.upv.es

\end{document}